\DeclareFontFamily{U}{mathx}{\hyphenchar\font45}
\DeclareFontShape{U}{mathx}{m}{n}{
      <5> <6> <7> <8> <9> <10>
      <10.95> <12> <14.4> <17.28> <20.74> <24.88>
      mathx10
      }{}
\DeclareSymbolFont{mathx}{U}{mathx}{m}{n}
\let\widecheck\@undefined
\let\widebar\@undefined
\DeclareMathAccent{\widecheck}{\mathord}{mathx}{"71}
\DeclareMathAccent{\widebar}{\mathord}{mathx}{"73}
\DeclareSymbolFont{stmry}{U}{stmry}{m}{n}
\let\llbracket\@undefined
\let\rrbracket\@undefined
\DeclareMathDelimiter{\llbracket}{\mathopen}%
                     {stmry}{"4A}{stmry}{"71}
\DeclareMathDelimiter{\rrbracket}{\mathclose}%
                     {stmry}{"4B}{stmry}{"79}
\newcommand{\invs}{^{-1}}
\newcommand{\half}{\frac{1}{2}}
\newcommand{\set}[1]{\mathchoice%
  {\left\lbrace #1 \right\rbrace}%
  {\lbrace #1 \rbrace}%
  {\lbrace #1 \rbrace}%
  {\lbrace #1 \rbrace}%
}
\newcommand{\abs}[1]{\mathchoice%
  {\left\lvert #1 \right\rvert}%
  {\lvert #1 \rvert}%
  {\lvert #1 \rvert}%
  {\lvert #1 \rvert}%
}
\renewcommand{\bar}{\overline}
\renewcommand{\hat}{\widehat}
\newcommand{\dual}{^*}
\newcommand{\pound}{_{\#}}
\newcommand{\ppp}{^{\prime\prime\prime}}
\newcommand{\pp}{^{\prime\prime}}        
\newcommand{\of}{\circ}                    
\newcommand{\into}{\hookrightarrow}
\newcommand{\numset}[1]{\mathbb{#1}}
\newcommand{\one}{\mathbb{1}}
\newcommand{\N}{{\numset{Z}_{>0}}}
\newcommand{\Z}{\numset{Z}}
\newcommand{\Zpos}{\Z_{\geq 0}}
\newcommand{\Q}{\numset{Q}}
\newcommand{\R}{\numset{R}}
\newcommand{\C}{\numset{C}}
\DeclareMathOperator{\Id}{Id}
\DeclareMathOperator{\double}{double}
\newcommand{\diffeo}{\cong}
\newcommand{\boundary}{\partial} 
\newcommand{\connsum}{\mathbin{\#}} 
\newcommand{\nbhd}[1]{\nu (#1)}
\DeclareMathOperator{\cocore}{cocore}
\DeclareMathOperator{\sign}{sign}
\newcommand{\sphere}[1]{{S}^{#1}}
\newcommand{\disk}[1]{{D}^{#1}}
\newcommand{\ball}[1]{{B}^{#1}}
\renewcommand{\SS}[2]{{S}^{#1}\times{S}^{#2}}
\newcommand{\DS}[2]{{D}^{#1}\times{S}^{#2}}
\newcommand{\DD}[2]{{D}^{#1}\times{D}^{#2}}
    \title{The Complexity of Shake Slice Knots}                                              
    \author{Charles Ransome Stine}
    \address{Brandeis University}
    \email{\href{mailto:crt64@brandeis.edu}{crt64@brandeis.edu}}
    \urladdr{https://sites.google.com/view/charlesrstine}
    \def\subjclassname{\textup{2020} Mathematics Subject Classification}
\let\csname subjclassname@1991\endcsname=\subjclassname
\let\csname subjclassname@2000\endcsname=\subjclassname
    \subjclass{57K40 
        (Primary),
        57K10 
        (Secondary);
        \hfill
        Date: \today
    } 
    \keywords{shake slice, complexity, knot trace}
    \newcommand{\red}[1]{\textcolor{BrickRed}{#1}}
    \newcommand{\green}[1]{\textcolor{ForestGreen}{#1}}
    \newcommand{\blue}[1]{\textcolor{Blue}{#1}}
    \newcommand{\purple}[1]{\textcolor{RoyalPurple}{#1}}
    \newcommand{\orange}[1]{\textcolor{orange}{#1}}
    \newcommand{\sI}{{\mathscr I}}
    \newcommand{\sG}{{\mathscr G}}
    \DeclareMathOperator{\Int}{Int}
    \DeclareMathOperator{\wrap}{wrap}
    \newcommand{\sub}[1]{_{#1}}
\begin{document}

\begin{abstract}     
    We define a notion of complexity for shake-slice knots which is analogous to the definition of complexity for h-cobordisms studied by Morgan-Szabó. We prove that for each framing $n \ne 0$ and complexity $c \ge 0$, there is an $n$-shake-slice knot with complexity at least $c$. Our construction makes use of dualizable patterns, and we include a crash course in their properties. We bound complexity by studying the behavior of the classical knot signature and the Levine-Tristram signature of a knot under the operation of twisting algebraically-one strands.
\end{abstract}

\maketitle

\section{Introduction}

    \begin{question}
        \label{qun:motivation}
        How far can a shake-slice knot be from being slice?
    \end{question}

    Given an oriented knot $K$ in $\sphere3$, one can construct the associated knot trace $X_n(K)$ by attaching an $n$-framed 2-handle to $\ball4$ along $K$ for each $n \in \Z$. Observe that if $K$ is a slice knot, then the generating class of $H_2(X_n(K),\Z)$ is represented by a smoothly embedded sphere, composed of two hemispheres: the slice disk for $K$ in $\ball4$ and the core of the attached 2-handle. It is natural to ask if this is the only way for $H_2$ of a knot trace to be generated by a smoothly embedded sphere. Akbulut answered this question negatively in \cite{akbulut_2-dimensional_1977}: he showed that there were non-slice knots whose $\pm1$ and $\pm 2$-traces contained smoothly embedded spheres generating $H_2$. In his subsequent paper \cite{akbulut_knots_1993}, he gave similar examples for each non-zero integer framing.

    Akbulut called these knots \emph{shake-slice}, and they have been studied in both the smooth and topological category, depending on whether the sphere generating $H_2$ of the trace is smoothly or topologically, locally-flatly embedded. Smoothly shake-slice knots were studied in \cite{cochran_shake_2016} in which the authors gave a characterization in terms of ribbon satellite patterns. Topologically shake-slice knots were characterized in turn by \cite{feller_embedding_2021}, who were able to answer \autoref{qun:motivation} for the framings $\pm 1$ in the topological category. 

    This paper considers two ways to measure how far a shake-slice knot (in either category) is from being slice: the first is simply the 4-genus of the knot, and the second is called the \emph{complexity}, which we denote $c^{TOP}$ or $c^\infty$. Like the genus, the complexity has the property that $c^\Box(K) \ge 0$ with equality if and only if $K$ is slice in the appropriate category. For each fixed $n \in \Z \setminus \set{0}$, $c \in \N$, we produce a smoothly $n$-shake-slice knot with 4-genus exactly $c$ (in both categories). We show that the complexity of the $n$-shake-slice knot is bounded below by its 4-genus (in each category respectively) and then we obtain an upper bound its smooth complexity by an explicit construction. We prove:

    \begin{theorem}
    \label{thm:main}
        Let $n \in \Z \setminus \set 0$ and $c \in \Z_{>0}$ be given. There exists a knot $K_{n,c} \subset \sphere3$ such that $K_{n,c}$ is smoothly $n$-shake-slice with 4-genus and complexity:
        $$ 
            g_4^{TOP}(K_{n,c}) = g_4^\infty(K_{n,c}) = c \le c^{TOP}(K_{n,c}) \le c^\infty(K_{n,c}) \le \half\left(3^c-1\right)
        $$
    \end{theorem} 

    The knots $K_{n,c}$ are satellite knots constructed using the dualizable-patterns technology of \cite{miller_knot_2018} and others. Our examples are somewhat reminiscent of the examples of \cite[Prop. 7.6]{cochran_shake_2016}, however Cochran-Ray assume the smooth 4D Poincare conjecture to prove their knots are smoothly shake-slice. Our examples, by contrast, are \emph{a priori} smoothly shake-slice because they are built out of patterns with smoothly slice duals. This technology has been in the literature for a while, but there is no single, complete reference for it. We give one here, and iron out a few kinks in the notation to produce a complete, symbolic calculus for dualizable patterns. We append a one page cheat sheet (\autopageref{cheatsheet}) with a concise summary of the calculus which may be used independently from the rest of this paper. 

    The main technical difficulty in this work is that very few invariants are well suited to compute the 4-genus of a smoothly $n$-shake-slice knot. Most invariants derived from gauge theory work by embedding some trace of the knot into a 4-manifold with nice geometric properties and then bounding the genus of the associated, primitive $H_2$ class. Such invariants vanish for smoothly shake-slice knots by definition. The Thurston-Bennequin inequality for Legendrian representatives, used in \cite{cochran_shake_2016}, does not work for our examples for this reason. The trace-invariance of the Heegaard Floer concordance invariants $\nu, \nu^-$ proved in \cite{hayden_exotic_2019} rules them out as well, and, by a small additional argument, rules out $\tau$. Rasmussen's $s$-invariant remains a possibility, but its behavior under satellite operations is too poorly understood for it to be practically computable. The $d$-invariants of finite, cyclic branched-covers do not obviously vanish for smoothly shake-slice knots, but they cannot be used to show the 4-genus is greater than one. Altogether, this rules out the modern tools of which we are aware.

    We use the topological 4-genus bound coming from the Levine-Tristram signatures, first introduced in \cite{tristram_cobordism_1969}, associated to a knot an a unit-norm complex number. As Conway points out in his excellent survey \cite{conway_levine-tristram_2021}, the LT-signatures of a knot can be computed either from a cyclic branched cover of $\ball4$ along a pushed-in Seifert surface, or from the twisted 2-homology of the 0-surgery. The main thrust of our argument plays these two definitions against each other to reduce the calculation for our knots $K_{n,c}$ to an elementary observation about the zeroes of an explicit family of Laurent polynomials on the unit circle in the $\C$. We see from the second definition that the LT-signatures are invariants of the 0-trace of $K$, which is why the framing $n=0$ is excluded from our theorem, but we learn that for all other framings the LT-signatures can provide arbitrarily high genus bounds for smoothly shake-slice knots. 

    Morally, this seems to indicate that 4-genus bounds arising from branched covers are well-suited to studying shake-slice knots. If some Floer-theoretic analogue of the LT-signature could be defined, stronger than the $d$-invariant, then it would likely be possible to construct smoothly shake-slice knots whose topological and smooth complexities differ. The author has constructed several families of examples which seem to have this behavior, but it is impossible to verify in the absence of such an invariant. We also remark that a satellite formula for the $s$-invariant of wrapping number three patterns would also likely suffice to verify these examples. 

    As an aside to the main result of this paper, we give a reformulation of the Goeritz-Trotter method for computing the classical signature of a knot, which, if not algorithmically quicker than procedure as described in \cite{litherland_signatures_1979}, is easier to perform as a human looking at a diagram of a knot. Using this reformulation, we are able to give a different, somewhat simpler proof of the following theorem due to Tristram:

    \begin{theorem}[\cite{tristram_cobordism_1969}, Cor. to Thm 3.2]
    \label{thm:2twists}
        Let $K$ be a knot in $\sphere3$, and let $U$ be an unknot disjoint from $K$ with $lk(K,U) = 1$. Let $K_n$ be the result of applying $n$ full twists to the strands of $K$ which intersect a spanning disk for $U$. For all $n \in \Z$, 
        $$
            \sigma(K_n) = \sigma(K_{n+2}) \quad\text{and thus}\quad \abs{\set{\sigma(K_n) \mid n \in \Z}} \le 2
        $$
    \end{theorem}

    This is in stark contrast to the behavior of the signature when we change the linking condition to $lk(K,U) = 0$. Under that condition the signature of $K_n$ decreases steadily until it hits a fixed, stable value and the number of times the signature decreases is roughly proportional to the number of algebraically cancelling pairs of strands which pass through $U$. When $lk(K,U) = \pm 1$, the signature oscillates between two fixed values, a behavior which is independent of the number of algebraically cancelling pairs of strands being twisted. Tristram's original proof proceeds by constructing a family of very large Seifert forms for the knots $K_n$ and then analyzing the behavior of the associated Hermitian forms. Our proof uses a much simpler family of non-orientable spanning surfaces for the knots $K_n$, which leads to simpler algebra.

    \subsection{Organization}

        We start by reviewing the definition and properties of dualizable patterns. We give two different diagrammatic methods for computing the dual of a pattern. We give a universal, combinatorial method for constructing dualizable patterns, and a systematic notation for building and manipulating satellite knots with dualizable patterns. We review the relevant properties of shake-slice knots in \S3, and give a diverse family of dualizable patterns. We prove that 4-genus bounds complexity, and we give sufficient conditions for a pattern and 4-genus lower bound to produce and detect high-complexity shake-slice knots. We digress in \S4 to give our reformulation of Goeritz and Trotter's method for computing the signature. We use it to prove \autoref{thm:2twists} and then use that to deduce a proof of \autoref{thm:main} for $n$ odd. Finally, in \S5, we verify the general sufficient conditions given in \S3 for a specific pattern and the Levine-Tristram signature function. This completes the proof of \autoref{thm:main} for both parities of $n$. 

    \subsection{Acknowledgements}

        This project is deeply indebted to Danny Ruberman, who suggested the question and provided a great deal of support along the way, and Lisa Piccirillo for many helpful conversations. The author thanks Joshua Wang for helping to fix an issue with the proof of \autoref{thm:2twists}, and Allison Miller for her comments on an early draft. The author thanks Arumina Ray for several helpful conversations regarding the results in \cite{cochran_shake_2016}, and encouraging him to continue sharpening the result. Finally the author thanks the anonymous referee for making several valuable suggestions concerning the introduction and organization of the work. 

\section{The Calculus of Dualizable Patterns}

    \subsection{Definitions and Examples}

        \subsubsection{The Definition of a Dualizable Pattern}
        
            Let $V := \DS21$ with preferred, oriented longitude $\lambda_V := \set{pt} \times \sphere1$ and oriented meridian $\mu_V := \boundary\disk2 \times \set{pt}$ marked in $\boundary V$. A pattern is a smooth embedding $P : \sphere1 \hookrightarrow \Int V$ of an oriented circle into the interior of $V$. We define the winding number, $w(P) := lk(\mu_V, P)$, where we identify $P$ with its oriented image. We further define the wrapping number $\wrap(P) = | P \cap (\disk2 \times \set{pt})|$, minimized over the oriented isotopy class of $P$ transverse to the disk. It is immediately clear that that for any pattern $P$ we have that for some $c \in \N$: $$ \wrap(P) = |w(P)| + 2c $$ where we can interpret $c$ as the number of pairs of algebraically cancelling intersection points between $P$ and $\set{pt}\times\disk2$.
            
            Given a pattern $P$ we denote $V \setminus \nbhd P$ by $V_P$ which comes with two boundary components: $\boundary V$ and $\boundary \nbhd P$ both of which are tori. We call $\boundary V$ the `outer torus' and $\boundary \nbhd P$ the `inner torus' associated to $V_P$. The outer torus comes with a preferred identification to $\sphere1\times\sphere1$ given by the pair $(\mu_V,\lambda_V)$. We can also construct a similar identification for the inner torus as follows: glue $V$ into the complement of the unknot in $\sphere3$ so that $\mu_V$ is glued to the meridian of the unknot, and $\lambda_V$ is glued to the unique longitude which bounds a disk in the complement. This converts $P$ into an embedding $P' : \sphere1 \into \sphere3$, thereby defining an oriented knot. We get a triple embedding, 
            $$
                \nu(P) \into V \into \sphere3 
            $$
            and there is a unique null-homologous longitude on $\boundary \nu(P)$ which bounds an oriented surface in $\sphere3 \setminus \nu(P)$. We call this curve $\lambda_P \subset \boundary\nu(P)$ and we orient it coherently with $P$ so that $lk(\mu_V, \lambda_P) = lk(\mu_V, P)$. After choosing any identification of $\DS21$ with $\nu(P)$, we define $\mu_P$ to be the image of $\boundary\disk2\times\set{pt}$ under this identification. It is a standard exercise in 3-manifold topology that this is well defined. We orient $\mu_P$ so that $lk(\mu_p,P) = +1$. Thus the pair $(\mu_P,\lambda_P)$ define an identification of the inner torus of $V_P$ with the product of oriented circles, $\SS11$.
            
            \begin{definition}\cite[Def.\ 3.1]{miller_knot_2018}
                A pattern $P$ is \emph{dualizable} with dual pattern $P\dual \subset V\dual$ iff there exists an orientation preserving homeomorphism $* : V_P \to V\dual_{P\dual}$ such that,
                    \begin{enumerate}[label=(\roman*),itemsep=3mm]
                        \item $*$ maps the inner torus of $V_P$ to the outer torus of $V\dual_{P\dual}$, and the outer torus to the inner torus.
                        \item $(*(\mu_V),*(\lambda_V)) = (-\mu_{P\dual}, \lambda_{P\dual})$ 
                        \item $(*(\mu_P),*(\lambda_P)) = (-\mu_{V\dual}, \lambda_{V\dual})$
                    \end{enumerate}
            \end{definition}
        
            Given a dualizable pattern $P$, we can obtain the dual pattern by Dehn filling with slope $\infty$ the outer boundary (only) of $V_P$. The existence of $*$ implies that this manifold is homeomorphic to $\DS21$ so we call it $V\dual$ and we obtain $P\dual$ as the core of the new Dehn filling, oriented coherently with $\lambda_V$. 
            
            Whenever we draw explicit examples of patterns, their duals, and satellite constructions, we will adopt the orientation convention that knot complements in $\sphere3$ are oriented with the inward pointing normal first at the boundary, while solid tori are oriented with the outward pointing normal first at the boundary. Thus all our gluing maps are orientation preserving at the boundary. This fixes an orientation convention on the exterior of a pattern $V_P$. If we pick a random companion knot $K$ and consider the decomposition of $\sphere3$ naturally associated to the satellite knot $P(K)$ we see $\sphere3$ split into three oriented, codimension-0 parts:
            $$ 
                \sphere3 = \nbhd P \underset{\boundary \nbhd P}{\cup} V_P \underset{\boundary V}{\cup} E_K
            $$
            Since orientations must agree at both boundary tori across all three pieces and we know that $\nbhd P$ is oriented outward-normal-first, while $E_K$ is oriented inward-normal-first, we can deduce that $V_P$ must be oriented inward-normal-first at $\boundary \nbhd P$ and outward-normal-first at $\boundary V$. This leads to the rather pleasant sounding convention that $V_P$ is oriented outward-first at its outer boundary and inward-first at its inner boundary. In case this is confusing, we include a totally explicit picture of $Wh^-(3_1) \subset \sphere3$ with all longitudes, meridians, and orientation arrows drawn:
            
            \begin{figure}[hpbt]
                \labellist
                \small\hair 2pt
                 \pinlabel \red{$\mu_{Wh^-}$} [l] at 65 77
                 \pinlabel \blue{$\lambda_V$} [l] at 87 147
                 \pinlabel \red{$\mu_V$} [] at 284 118
                 \pinlabel {$V$} [ ] at 19 166
                 \pinlabel \green{$\nu_{Wh^-}$} [l] at 87 117
                 \pinlabel \textcolor{violet}{$\lambda_{Wh^-}$} [l] at 82 101
                 \pinlabel {$E_{3_1}$} [br] at 313 1
                \endlabellist
                \centering
                \includegraphics{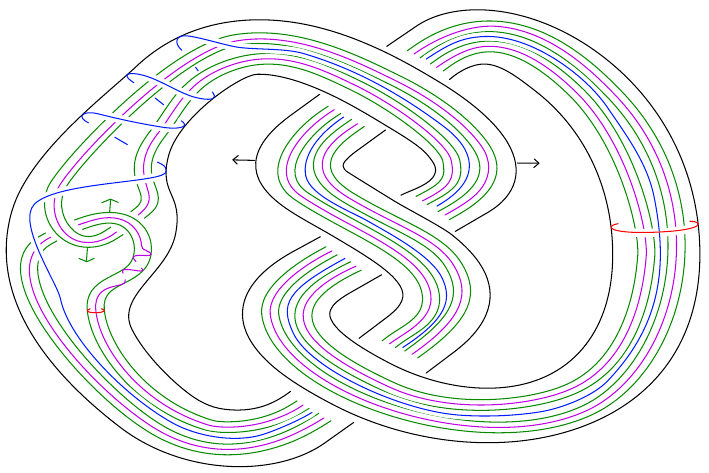}
                \caption{Orientations and surfaces in $Wh^-(3_1)$}
                \label{fig:whpt}
            \end{figure}
            
            We now summarize some useful facts about dualizable patterns, all of which are stated and proven in \cite{miller_knot_2018}. Given a knot $K \subset \DS21$, we can construct an associated knot, $\widehat{K} \subset \SS21$, by doubling $\DS21$ and letting $\widehat{K}$ be the composition of the inclusions: $$ \widehat{K}: K \into \DS21 \into \SS21 $$ Often $K$ will be either a pattern, $P$, or one of the framing curves from $\boundary V_P$.
            
            \begin{proposition}
            \label{prop:ss}
                \cite[Prop\ 3.5]{miller_knot_2018} A pattern $P$ is dualizable iff $\widehat P$ is oriented-isotopic to $\widehat{\lambda_V} = \set{pt}\times\sphere1$ in $\SS21$. 
            \end{proposition}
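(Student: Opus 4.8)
The plan is to establish both directions by relating the homeomorphism $*$ in the definition of dualizability to the effect of doubling $V$ along its boundary. First I would record the basic observation that $\widehat{\lambda_V} = \sphere1 \times \set{pt} \subset \SS21$ is, up to oriented isotopy, the unique knot in $\SS21$ whose complement is again a copy of $\DS21$ — indeed $\SS21 \setminus \nbhd{\sphere1\times\set{pt}}$ deformation retracts onto the complementary $\DS21 = \disk2 \times \sphere1$, and conversely a knot with this property must represent a generator of $H_1(\SS21) \isom \Z$ with the correct framing, pinning it down up to isotopy. So it suffices to show: $P$ is dualizable iff $\SS21 \setminus \nbhd{\widehat P} \diffeo \DS21$ in a way that respects the relevant curves.

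For the forward direction, suppose $P$ is dualizable with homeomorphism $* : V_P \to V\dual_{P\dual}$. The key move is to form $\SS21 \setminus \nbhd{\widehat P}$ by taking two copies of $V_P$ and gluing them along their outer tori $\boundary V$ via the identity (this realizes the doubling of $V$, with $\widehat P$ the image of $P$). Now use $*$ on one of the two copies: it carries that copy of $V_P$ to $V\dual_{P\dual}$, sending the outer torus to the inner torus $\boundary\nbhd{P\dual}$ and, by conditions (ii) and (iii), matching up the meridian-longitude framings (with the prescribed signs) across the gluing. The result is that $\SS21 \setminus \nbhd{\widehat P}$ is obtained by gluing $V_{P}$ to $V\dual_{P\dual}$ along the torus $\boundary V \leftrightarrow \boundary\nbhd{P\dual}$; but gluing $V\dual_{P\dual}$ back to $\nbhd{P\dual}$ along exactly this torus reconstitutes $V\dual = \DS21$. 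Tracking the curve $\widehat{\lambda_V}$ through this identification — it lies in the doubling torus $\boundary V$ and gets sent by $*$ to $\lambda_{P\dual}$, which is isotopic in $V\dual = \DS21$ to $\set{pt}\times\sphere1$ — shows $\widehat P$ is oriented-isotopic to $\widehat{\lambda_V}$.

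For the converse, suppose $\widehat P$ is oriented-isotopic to $\widehat{\lambda_V}$ in $\SS21$. Then the complement $\SS21 \setminus \nbhd{\widehat P}$ is homeomorphic to $\DS21$ by the uniqueness statement above. On the other hand, this complement is by construction the double of $V$ along $\boundary V$ with a neighborhood of $P$ removed, i.e. two copies of $V_P$ glued along their outer tori. One copy of $V_P$, sitting inside this $\DS21 \isom \SS21 \setminus \nbhd{\widehat P}$ with its outer torus now an \emph{interior} torus, has as its exterior in this $\DS21$ precisely the other copy of $V_P$ with a solid-torus ($\nbhd{\widehat P} \isom \nbhd{\lambda_V}$) glued on — that is, a copy of $V$ with $P$ removed, which is $V_P$ with the roles of the two boundary tori swapped. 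Declaring $V\dual$ to be this ambient $\DS21$ and $P\dual$ to be the core of the glued-in solid torus, oriented coherently with $\lambda_V$, the inclusion $V_P \into V\dual$ (one of the two copies) is the desired homeomorphism onto $V\dual_{P\dual}$; conditions (i)--(iii) then amount to bookkeeping of which torus is inner vs.\ outer and how $\mu_V, \lambda_V, \mu_P, \lambda_P$ sit in the doubling, using our orientation conventions to fix the signs in (ii) and (iii).

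The main obstacle I expect is the sign and framing bookkeeping in conditions (ii) and (iii): one must check that under the doubling the meridian of $V$ becomes (minus) the longitude $\lambda_{P\dual}$ and the longitude $\lambda_V$ becomes $\lambda_{P\dual}$ — equivalently that the $0$-framing on $\widehat P = \widehat{\lambda_V}$ (the framing from $\SS21$, which bounds in the complement) matches the framing $\lambda_P$ coming from viewing $P$ as a knot in $\sphere3$. This is exactly the point where one needs the careful orientation conventions set up in the excerpt (outward-normal-first at the outer boundary, inward-normal-first at the inner boundary), and it is the step I would write out most carefully; the rest is the essentially formal cut-and-paste of the complementary pieces described above. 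Since this is precisely \cite[Prop.\ 3.5]{miller_knot_2018}, I would also be content to cite it and merely indicate the doubling picture for the reader's intuition.
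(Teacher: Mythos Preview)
Your argument has a genuine error in the basic cut-and-paste: the complement $\SS21 \setminus \nbhd{\widehat P}$ is \emph{not} two copies of $V_P$ glued along their outer tori. By definition $\widehat P$ is the image of $P$ under the single inclusion $V \hookrightarrow \double(V) = \SS21$, so removing $\nbhd{\widehat P}$ deletes a neighborhood of $P$ from only one of the two solid tori making up the double. The correct decomposition is
\[
\SS21 \setminus \nbhd{\widehat P} \;=\; V_P \;\cup_{\boundary V}\; V',
\]
where $V' \cong \DS21$ is the second solid torus of the double, glued so that its meridian meets $\mu_V$. With this in hand the forward direction is actually \emph{simpler} than what you wrote: apply $*$ to the piece $V_P$, obtaining $V\dual_{P\dual}$ with inner and outer tori swapped; the solid torus $V'$ is now glued along $\boundary\nbhd{P\dual}$ with meridian sent to $*(\mu_V) = -\mu_{P\dual}$, so it exactly refills $\nbhd{P\dual}$ and reconstitutes $V\dual \cong \DS21$. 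Your converse direction inherits the same confusion and would need to be rewritten accordingly.

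Separately, your ``uniqueness'' lemma --- that a knot in $\SS21$ with solid-torus complement is isotopic to $\sphere1\times\set{pt}$ --- is true, but it does not follow from the homological reasoning you give (representing a generator of $H_1$ with the right framing does not pin down an isotopy class). One needs Waldhausen's uniqueness of genus-one Heegaard splittings of $\SS21$, or an equivalent input.

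The paper itself does not prove this proposition but cites \cite{miller_knot_2018}; the sketch it gives for the forward direction is different from yours. Rather than analyzing the complement of $\widehat P$, the paper extends $*$ across the two Dehn fillings to a self-homeomorphism $\widehat{*}$ of $\SS21$ taking $\widehat P$ to $\widehat{\lambda_{V\dual}} = \sphere1\times\set{pt}$, and then invokes Gluck's computation of the mapping class group of $\SS21$ (identity and Gluck twist, both fixing the isotopy class of an $\sphere1$-factor) to conclude that $\widehat P$ is isotopic to $\sphere1\times\set{pt}$. This replaces your appeal to Heegaard-splitting uniqueness with an appeal to Gluck's theorem; either input is nontrivial, but the paper's route avoids the complement analysis entirely.
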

            
            \begin{corollary}
            \label{cor:pione}
                \cite[Cor.\ 3.6]{miller_knot_2018} A pattern $P$ with $w(P) = 1$ is dualizable iff $\mu_P \in \llangle \mu_V \rrangle$, the subgroup of $\pi_1(V_P)$ normally generated by $\mu_V$. 
            \end{corollary}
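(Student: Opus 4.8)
The plan is to deduce the statement from \autoref{prop:ss}: $P$ is dualizable exactly when $\widehat P$ is oriented-isotopic to $\widehat{\lambda_V} = \sphere1 \times \set{pt}$ inside $\sphere1 \times \sphere2$. So it suffices to prove that, under the hypothesis $w(P) = 1$, the knot $\widehat P$ is oriented-isotopic to $\widehat{\lambda_V}$ if and only if $\mu_P \in \llangle \mu_V \rrangle$.

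The first move is to translate the condition $\mu_P \in \llangle\mu_V\rrangle$ into a statement about $\widehat P$. Set $E := (\sphere1 \times \sphere2) \setminus \nbhd{\widehat P}$. Realizing $\sphere1 \times \sphere2$ as the double of $V$ along $\boundary V$, with $\widehat P$ lying in the first copy of $V$, one has $E = V_P \cup_{\boundary V} V'$, where $V'$ is the second copy of $V$: a solid torus attached along $\boundary V$ so that $\mu_V$ bounds its meridian disk. A van Kampen argument then produces a natural isomorphism $\pi_1(E) \cong \pi_1(V_P)/\llangle\mu_V\rrangle$ under which the class of $\mu_P$ goes to the class of the meridian $\mu_{\widehat P}$ of $\widehat P$ (the inner torus of $V_P$ is not touched by the gluing). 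Hence $\mu_P \in \llangle\mu_V\rrangle$ if and only if $\mu_{\widehat P}$ is null-homotopic in $E$. One direction of the corollary is then immediate: if $P$ is dualizable, \autoref{prop:ss} gives $\widehat P$ isotopic to $\sphere1 \times \set{pt}$, so $E \homeo \sphere1 \times \disk2$ and inside it $\mu_{\widehat P}$ bounds a disk; being null-homotopic, $\mu_P \in \llangle\mu_V\rrangle$.

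For the converse, assume $\mu_P \in \llangle\mu_V\rrangle$, equivalently that $\mu_{\widehat P}$ is null-homotopic in $E$. Re-gluing $\nbhd{\widehat P}$ to $E$ along its meridian recovers $\sphere1 \times \sphere2$, so van Kampen gives $\pi_1(\sphere1 \times \sphere2) \cong \pi_1(E)/\llangle\mu_{\widehat P}\rrangle$; as $\mu_{\widehat P}$ is already trivial in $\pi_1(E)$, this forces $\pi_1(E) \cong \Z$. Since a compact orientable $3$-manifold with torus boundary and infinite cyclic fundamental group is a solid torus, $E \homeo \sphere1 \times \disk2$, so $\sphere1 \times \sphere2 = \nbhd{\widehat P} \cup E$ is a genus-one Heegaard splitting, hence isotopic to the standard one by Waldhausen's classification of Heegaard splittings of $\sphere1 \times \sphere2$; in particular $\widehat P$, a core of one of the two handlebodies, is isotopic as an unoriented knot to $\sphere1 \times \set{pt}$. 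Finally, $w(\widehat P) = w(P) = 1$ equals the winding number of $\widehat{\lambda_V}$; since the winding number is carried along by the orientation-preserving ambient isotopy realizing the previous sentence and distinguishes the two orientations of $\sphere1 \times \set{pt}$, that isotopy must respect orientations. Thus $\widehat P$ is oriented-isotopic to $\widehat{\lambda_V}$, and $P$ is dualizable by \autoref{prop:ss}.

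I expect the delicate points to be: (i) pinning down the isomorphism $\pi_1(E) \cong \pi_1(V_P)/\llangle\mu_V\rrangle$ with the correct identification of $\mu_P$, which uses the doubling description of $\sphere1 \times \sphere2$ together with the orientation conventions on $V_P$ fixed above; and (ii) the two classical $3$-manifold facts invoked in the converse — that infinite cyclic fundamental group together with torus boundary forces a solid torus, and that a core of a genus-one Heegaard splitting of $\sphere1 \times \sphere2$ is isotopic to $\sphere1 \times \set{pt}$ — which are standard but should be cited carefully. It is precisely the hypothesis $w(P) = +1$, as opposed to merely $\abs{w(P)} = 1$, that is needed to upgrade the unoriented isotopy to an oriented one in the last step; when $w(P) = -1$ one instead obtains $\widehat{\lambda_V}$ with reversed orientation, and such a $P$ is not dualizable.
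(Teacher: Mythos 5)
The paper offers no proof of this corollary---it is quoted verbatim from Miller--Piccirillo \cite[Cor.\ 3.6]{miller_knot_2018}---so there is nothing in-paper to compare against; judged on its own, your derivation from \autoref{prop:ss} is correct and is essentially the standard argument. The key translations are all right: doubling $V$ realizes the exterior of $\widehat P$ in $\sphere1\times\sphere2$ as the Dehn filling of the outer boundary of $V_P$ killing exactly $\llangle\mu_V\rrangle$, under which $\mu_P$ literally \emph{is} the meridian $\mu_{\widehat P}$; triviality of that meridian forces $\pi_1$ of the exterior to be $\Z$; the exterior is then a solid torus, so $\nu(\widehat P)$ is one handlebody of a genus-one Heegaard splitting of $\sphere1\times\sphere2$, and uniqueness of such splittings gives the unoriented isotopy, with $w(P)=+1$ (an isotopy invariant, being the class in $H_1$) pinning down the orientation. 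The one step you should not leave as a bare citation is ``compact orientable $3$-manifold with torus boundary and infinite cyclic $\pi_1$ is a solid torus'': the classical theorem (Hempel, Thm.~5.2) assumes irreducibility, so you must either note that the boundary torus forces the unique non-simply-connected prime summand to carry all of $\pi_1$ and invoke the Poincar\'e conjecture to dispose of the remaining closed simply-connected summands, or argue irreducibility of the exterior directly. With that caveat addressed, the proof is complete, and your closing observation---that the hypothesis really needed from $w(P)$ is the sign, since $\mu_P\in\llangle\mu_V\rrangle$ alone already forces $\abs{w(P)}=1$---is a correct and worthwhile remark.
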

            
            These statements are important because their proofs indicate both how to construct dualizable patterns and, given a dualizable pattern, how to compute its dual pattern. We will give a brief verbal description of these processes here and include an example, \autoref{fig:PD1}. We highly encourage the reader to separate out the pages containing these figures so as to examine them side-by-side with the verbal description below. 
            
            The key to understanding both points is to study the copies of $\SS21$ obtained by doubling the solid tori $V$ and $V\dual$ containing dualizable patterns $P$ and $P\dual$. Each decomposes into three pieces: 
            \begin{gather*}
                \double(V) = \nbhd P\ \cup\ V_P\ \cup\ \DS21 \\
                \double(V\dual) = \nbhd {P\dual}\ \cup\ V\dual_{P\dual}\ \cup\ \DS21
            \end{gather*} 
            where we recall that the pair of curves $(\mu_P, \lambda_P)$ give a natural identification $\nbhd P = \DS21$, and similarly for $\nbhd{P\dual}$. We can equivalently describe this as: 
            \begin{gather*}
                \double(V) = (\DS21)_{Inner}\ \cup\ V_P\ \cup\ (\DS21)_{Outer} \\
                \double(V\dual) = (\DS21)_{Inner}\ \cup\ V\dual_{P\dual}\ \cup\ (\DS21)_{Outer}
            \end{gather*} 
            The homeomorphism $*: V_P \to V\dual_{P\dual}$, which swaps the inner and outer boundaries, carries meridians to meridians and longitudes to longitudes. We can extend it to a map, 
            $$
                \widehat{*} : \double(V) \to \double(V\dual)
            $$
            which swaps the two solid tori filling in the inner and outer boundaries. $\widehat{*}$ is clearly an orientation preserving self-homeomorphism of $\SS21$, since both $V$ and $V\dual$ come with preferred identifications to $\DD21$, and thus their doubles have such identifications to $\SS21$. 
            
            We are going to keep track of what happens to $\widehat{\lambda_V}$ and $\widehat{\lambda_P}$, considered as oriented curves in $\SS21$, under the homeomorphism $\widehat{*}$. When we build $\SS21$ by doubling $V$, we get one of two canonical handle-body diagrams for $\SS21$. The first one, which we call the `2-ball diagram', comes from viewing $\SS21$ as surgery on a pair of disjointly embedded 3-balls. We draw the complement of the two $\ball3$'s and we identify the two boundary $\sphere2$'s of the complement by the map which reflects the complement across the sphere bisecting the line connecting the two boundary spheres perpendicularly. The $\sphere1$ factor in this picture is given by the line connecting the two balls in the diagram, and the $\sphere2$ factor is given by perpendicular bisector. For a more detailed description of this diagram consult \cite[page\ 114-115]{gompf_4-manifolds_1999}. The second diagram, which we will call the `dotted-circle diagram', comes from viewing $\SS21$ as the boundary of zero-surgery on the unknot. We draw a round, circular unknot in $\sphere3$ which we label with a $\langle 0 \rangle$ to indicate performing 0-surgery on it. Here, the $\sphere1$ factor is given by a small meridian to the surgered circle, while the $\sphere2$ factor is given by the disk which the surgered circle bounds. Consult \cite[page\ 167-168]{gompf_4-manifolds_1999} for more details. 
            
            It follows from work of Gluck \cite{gluck_embedding_1962} that there are only two orientation preserving self-homeomorphisms of $\SS21$ up to isotopy, which are the identity and the Gluck twist. Since the Gluck twist fixes the isotopy class of any fixed copy of the $\sphere1$-factor, it follows from the construction of $\hat{*}$ that $\hat{P}$ must be oriented isotopic to $\set{pt} \times \sphere1$ in $\SS21$ by an ambient isotopy. However this isotopy may change the framing of $\hat{P}$ since it may involve sliding $\hat{P}$ over the $\sphere2$ factor. The algebraic intersection number of $\hat P$ with the $\sphere2$ factor is $+1$ so each such slide will change the framing by $\pm 2$. After the ambient isotopy, apply Gluck twisting around $\hat P$ to correct the framing, and let $\hat{P\dual}$ be the image of $\hat{\lambda_V}$ under this isotopy and twisting. We recover $V\dual$ as $\SS21 \setminus \hat{\mu_V}$. This tells us how to dualize a pattern in practice. Given a dualizable pattern $P$, we can obtain $P\dual$ as follows: 

            \begin{enumerate}[itemsep=1mm]
                \item Draw $\hat{P}, \hat{\mu_V}, \hat{\lambda_V}$ in either of the standard diagrams of $\SS21$ (2-ball or dotted-circle) and keep track of their framings.
                \item Observe that removing a neighborhood of $\hat{\mu_V}$ recovers $P \subset V$.
                \item Draw an explicit, ambient isotopy of $\SS21$ which moves $\hat{P}$ to an $\sphere1$ factor, fixes $\hat{\mu_V}$, and keeps $\hat{\lambda_V}$ disjoint from the final position of $\hat{P}$ in the diagram (what this means will become clear in the examples). 
                \item The ambient isotopy may change the framings of $\hat{\lambda_V}$ and $\hat{P}$, so apply the Gluck twist map to the diagram to correct them (again, what this means will become clear in the examples). 
                \item Let $P\dual$ be the image of $\hat{\lambda_V}$ after the ambient isotopy and Gluck twisting, considered in the complement of $\mu_V$.
            \end{enumerate}
            
            The correctness of this process follows from the rough discussion proceeding it, with a little care. A completely detailed account is given in \cite{miller_knot_2018}. We give an example of this computation which we will carry out in dotted-circle notation. We refer the reader to \cite[Figure 2]{miller_knot_2018} for an example in 2-ball notation, or to version one (available on the arXiv) of this paper. 
    
        \subsubsection{An Example of Dualizing a Pattern}
        
            The following sequence of diagrams describe how to construct the dual of a pattern using the method described in the previous subsection. The pattern in question, $P$, is the main ingredient we use, along with its dual, $P\dual$, to construct the knots $K_{n,c}$ in \autoref{thm:main}. We do not actually include a diagram of any of these knots in this paper but we give complete instructions for how to draw them using the patterns $P$ and $P\dual$. This sequence of diagrams shows the isotopy which takes $\hat{P}$ to $\hat{\lambda_V}$ and then the Gluck twisting which we apply to correct the framings.
            
            Any isotopy of a curve in $\SS21$ can be drawn as a sequence of planar isotopies, Reidemeister moves, and slides of the curve over the surgered curve of the diagram. We will not indicate the paths traced out by curves during planar isotopies or Reidemeister moves. We give the start and end diagrams instead. We draw handle slides by using a purple, dashed arrow to indicate the band, which we assume to lie in the plane except where it crosses over/under other strands in the diagram. We will indicate the new framings of the curves by integers in parentheses after each handle slide. We also use a purple arrow to indicate the rotation of an $\sphere2$ factor when we apply the Gluck twist diffeomorphism to correct framings at the end. The component corresponding to the surgery diagram of $\SS21$ is drawn in red, $\hat{P}$ is drawn in black, and $\hat{\lambda_V}$ is drawn in green. $P\dual$ may be obtained from the final diagram by removing a neighborhood of the red curve from $\sphere3$. 

            \newpage
            
            \begin{figure}[htpb]
                \labellist
                \small\hair 2pt
                 \pinlabel \green{$\hat \lambda_V$} [l] at 96 278
                 \pinlabel {$\hat P$} [bl] at 129 379
                 \pinlabel \red{$\hat \mu_V$} [l] at 99 299
                 \pinlabel {$(+2)$} [bl] at 276 370
                 \pinlabel \green{$(-2)$} [bl] at 415 243
                 \pinlabel \green{\tiny{$+2$}} [ ] at 197 53
                 \pinlabel {$(0)$} [bl] at 223 57
                 \pinlabel \green{$(0)$} [bl] at 273 115
                 \pinlabel \green{$\hat P\dual$} [bl] at 420 120
                 \pinlabel \purple{$\buildrel{\text{Gluck twisting}}\over\longrightarrow$} [t] at 149 125
                 \pinlabel \red{$\hat \mu_{V\dual}$} [ ] at 205 30
                 \pinlabel \red{$\hat \mu_{V\dual}$} [ ] at 408 25
                \endlabellist
                \centering
                \includegraphics{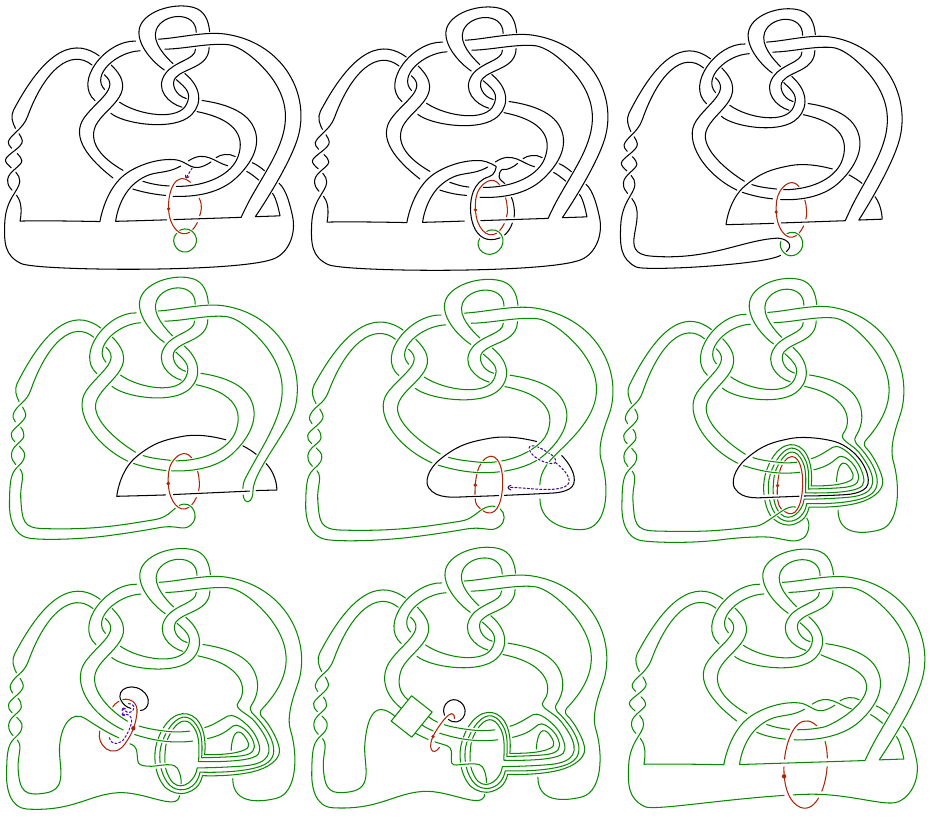}
                \caption{Computing the dual of $P$. The transition between the final two pictures is an isotopy of the green component in the complement of the other two. $P\dual$ is given by the curve $\hat P\dual$ in the complement of a neighborhood of the red curve in the final picture.}
                \label{fig:PD1}
            \end{figure}

        \subsubsection{A Universal Construction}
            \begin{proposition} 
            \label{prop:unidual}
                Any pattern, $P$, constructed as follows is dualizable: 
                    \begin{enumerate}[itemsep=1mm]
                        \item Pick an oriented knot $K_0$ in $\sphere3$ and draw a positively oriented meridian, $m_0$, around $K_0$.
                        \item Draw unoriented parallel copies of $m_0$, labelled $m_1, \ldots, m_n$, so that all $m_{0 \le i \le n}$ bound disjoint spanning disks. 
                        \item Draw a band $b_i$ connecting $K_0$ to $m_i$ for each $1 \le i \le n$. The bands are allowed to link in any way with $K_0$ and $m_0$, but they are not allowed to intersect each other. 
                        \item Orient the new knot $K'$ using the induced orientation from $K_0$. 
                        \item Let $P \subset V$ be the knot $K' \subset \sphere3 \setminus \nbhd{K_0}$. 
                    \end{enumerate}
                Moreover, any dualizable pattern may be obtained from this construction by taking $K_0$ to be the unknot and choosing the bands correctly.
            \end{proposition}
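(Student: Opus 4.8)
The plan is to establish the two assertions separately, using the characterization of dualizability from \autoref{prop:ss} and its corollary. For the first assertion, that every pattern $P = K'$ produced by the recipe is dualizable, the first step is a linking-number computation showing $w(P) = lk(\mu_V, P) = 1$ — the band summands $m_1, \dots, m_n$ are $0$-framed parallel copies of $m_0$ and so do not affect the winding number, while $m_0$ is a positively oriented meridian of $K_0$. By \autoref{cor:pione} it is then enough to prove $\mu_P \in \llangle \mu_V \rrangle \le \pi_1(V_P)$, and, unwinding this exactly as in the proof of \autoref{prop:ss}, this amounts to showing that in $\double(V) = \SS21$ the curve $\widehat{K'}$ is oriented-isotopic to $\widehat{\lambda_V}$. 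The recipe is built so that this isotopy can be produced explicitly. In $\double(V)$ the meridian disk of $V$ doubles to the $\sphere2$-factor $S$; since each $m_i$ is isotopic in $V$ to $\mu_V$, its double $\widehat{m_i}$ bounds an embedded disk $d_i$ in $\double(V)$ (a pushed-off hemisphere of $S$), and the $d_i$ can be chosen pairwise disjoint; and $\widehat{K'}$ is $\widehat{K_0}$ band-summed along the $\widehat{b_i}$ with these disk-bounding curves. One absorbs each $\widehat{m_i}$ into its band by sliding it across $d_i$ until the $i$-th band sum becomes a finger move along $\widehat{b_i}$, and then retracts the finger; this leaves $\widehat{K'}$ isotopic to $\widehat{K_0}$, which is in turn isotopic to $\widehat{\lambda_V}$ because $m_0$ is a genuine meridian of $K_0$, so $\widehat{K_0}$ meets $S$ transversally in one point and the light-bulb trick applies. (When $K_0$ is the unknot, $\widehat{K_0}$ already equals $\widehat{\lambda_V}$ and this last step is vacuous.)

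I expect the main obstacle to be precisely the bookkeeping in that absorption step. The bands $b_i$ may link $K_0$ and $m_0$ arbitrarily, so one must verify both that a single absorption is legitimate for a badly linked band and — more delicately — that the $n$ absorptions can be performed simultaneously, the $i$-th not spoiling the configuration used for the $j$-th. The way to arrange this is an innermost-disk argument among the $d_i$ together with first sliding the foot of each $\widehat{b_i}$ along $\widehat{K_0}$ so that $\widehat{m_i}$ is absorbed exactly where $\widehat{K_0}$ punctures $d_i$; this is the same manipulation that underlies the dualization recipe displayed above, performed in the opposite direction.

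For the second assertion, let $Q \subset V$ be any dualizable pattern. By \autoref{prop:ss} there is an ambient isotopy of $\SS21 = \double(V)$ carrying $\widehat{Q}$ to $\widehat{\lambda_V}$, and, as in the dualization recipe, this isotopy can be written as a finite composition of planar isotopies, Reidemeister moves, and slides of the curve over the $\sphere2$-factor. Running the composition backwards rebuilds $\widehat{Q}$ from $\widehat{\lambda_V}$ using moves of these three kinds; now delete a neighborhood of $\widehat{\mu_V}$ to return to $\sphere3$, in which $\widehat{\lambda_V}$ becomes the unknot $K_0$ and $\widehat{\mu_V}$ becomes its meridian $m_0$. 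Under this translation each slide over the $\sphere2$-factor becomes precisely a band sum of the current curve with a parallel copy of $m_0$ — these produce the bands $b_1, \dots, b_n$ — while the planar isotopies and Reidemeister moves only record the completely unconstrained way the $b_i$ are allowed to wind around $K_0$ and $m_0$. Hence $Q$ is realized by the recipe with $K_0$ the unknot. Finally, starting from a nontrivial $K_0$ instead just amounts to carrying out the first several of those slides inside the base knot rather than as separate bands, which is frequently more economical in practice; this gives the closing sentence of the statement.
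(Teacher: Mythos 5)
Your first half is correct and essentially the paper's argument. The detour through \autoref{cor:pione} is unnecessary — the paper goes directly to \autoref{prop:ss} — but the substance is identical: each $\hat{m_i}$ bounds a disk in the $0$-surgered picture, sliding the end of $\hat{b_i}$ across that disk turns the band sum into a retractable finger, so $\hat{K'}$ is isotopic to $\hat{K_0}$, and the light bulb theorem finishes since $\hat{K_0}$ meets the $\sphere2$-factor once.

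The second half has a genuine gap. Reversing the isotopy supplied by \autoref{prop:ss} gives an \emph{alternating} sequence of slides over the $0$-surgered circle and isotopies in its complement, and your claim that the isotopies ``only record the completely unconstrained way the $b_i$ are allowed to wind'' is the conclusion you need, not an argument for it. Two things must actually be verified. First, an isotopy occurring between two slides moves the already-attached earlier bands before the later band is attached, so the end result is not manifestly of the required form — $n$ bands attached \emph{simultaneously} to a fixed unknot with parallels $m_1,\ldots,m_n$ all bounding disjoint disks. The paper handles this by inductively commuting each intermediate isotopy past the following slide, dragging that slide's attaching arc along the isotopy to produce a modified band, until all slides occur first. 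Second, and more seriously, even after this rearrangement the band produced by the $j$-th slide may intersect the bands of lower index, since the $j$-th slide is performed on a curve that already carries those bands; the recipe explicitly forbids the bands from intersecting one another. The paper resolves these intersections with the band-finger move of \autoref{fig:bandfinger}. Without both steps the ``moreover'' clause is not established; your proposal does not mention either, and the disjointness issue in particular cannot be waved away as ``winding.''
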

            
            We remark that although taking the base knot to be the unknot is sufficient to produce any dualizable pattern, it is often more convenient to start with some non-trivial knot.
            
            \begin{proof}
                Let $P$ be obtained from the process of \autoref{prop:unidual}. We start by showing that $P$ is dualizable. It suffices to give an explicit isotopy from $\hat{P}$ to $\sphere1$ in $\SS21$ by \autoref{prop:ss}. We consider this problem in `dotted-circle' diagrams. We obtain a picture of $\hat{P} \subset \SS21$ by drawing $K' \cup m_0$ and doing 0-surgery on $m_0$. Since the ends of the bands $b_*$ are the parallel copies of $m_0$, we can slide each end once over the 0-surgered component $m_0$. Each band then retracts back onto the original knot $\hat{K_0}$ so we see that $\hat{K'}$ is isotopic to $\hat{K_0}$ in $\SS21$. It follows from the classical light bulb theorem in $\SS21$ that ${K'}$ is isotopic to $\sphere1$ since $\abs{\hat{K_0} \cap \sphere2} = 1$.
                
                Let $P$ be any fixed dualizable pattern. We must find a collection of bands to attach to $K_0 = U$ as in \autoref{prop:unidual} which recover $P$. Consider $\hat{P}$ in the `dotted circle' diagram of $\SS21$. We know there is an ambient isotopy $\mathcal I : \SS21 \times [0,1] \to \SS21$ such that $\mathcal I_0 = \Id_{\SS21}$ and $\mathcal I_1(\hat{P}) = \sphere1$ by the characterization of dualizable patterns given in \autoref{prop:ss}. Using standard techniques from Kirby calculus applied in dimension 3 to $\mathcal I$, we see it can be represented in the `dotted circle' diagram as an alternating sequence of slides over the 0-surgered region and isotopies in the complement of the 0-surgered region. The key observation is that we can modify the isotopy in $\SS21$ so that all the handle slides occur in sequence at the beginning, followed by an isotopy in the complement of the 0-surgered region. 
                
                We can assume the first move in the sequence associated to $\mathcal I$ is a handle slide without loss of generality. We proceed down the sequence until we come to the first isotopy, which we call $I_1$. At the end of $I_1$, there is a small arc on $\hat{P}$ where the next band for the next handle slide, $S_1$, is attached. We can pull this arc back along $I_1$ and keep track of the path it traces out in $\SS21$. It may be helpful to visualize this path as a ribbon. Since this path itself is contained in a neighborhood of an arc, we can modify $I_1$ so that the path intersects neither itself nor the knot $\hat{P}$. We create a new sequence, defining a new isotopy, by removing $I_1$ and proceeding directly to $S_1$ except we append to the band the path traced out by its attaching arc under $I_1$, thereby defining a different slide $S'_1$. The result of $S'_1$ will be isotopic to the result of $S_1 \circ I_1$ by an isotopy called $I'_1$. We now construct a new isotopy $\mathcal I'$ by removing the subsequence $I_1$, $S_1$ from $\mathcal I$ and replacing it with $S'_1$, $I'_1$. By proceeding inductively in this manner, we can create a new isotopy which consists of a sequence of slides of $\hat{P}$ over the 0-surgered region, followed by a single isotopy in the complement. 
                
                \begin{figure}[t]
                    \labellist
                    \small\hair 2pt
                     \pinlabel {$b_n$} [l] at 114 153
                     \pinlabel {$b_{j<n}$} [l] at 166 135
                     \pinlabel {$K_0$} [l] at 59 28
                     \pinlabel {$b_n^\prime$} [l] at 365 152
                    \endlabellist
                    \centering
                    \includegraphics{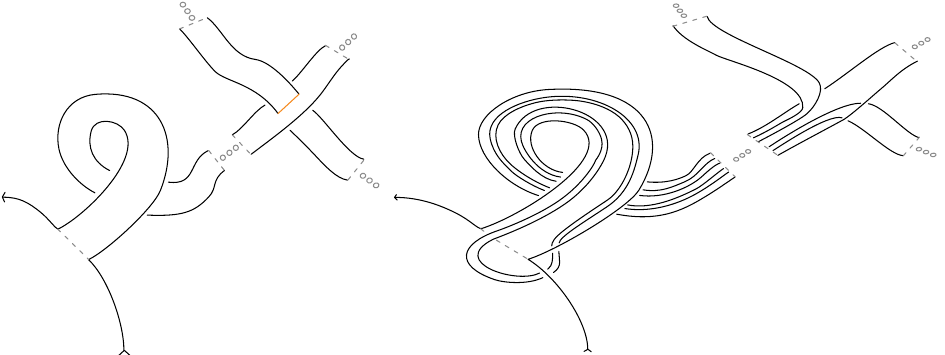}
                    \caption{Resolving the intersections between the $n^{th}$ band and those of lower index.}
                    \label{fig:bandfinger}
                \end{figure}    
                
                We now have $\mathcal I'$, which consists of a sequence of handle slides followed by an isotopy in the complement of the 0-surgered region. Because the slides are done one after another and not simultaneously, it is possible that the band from the $n^{th}$ slide might intersect the bands from the slides $1,\ldots, n-1$. We can fix this by resolving the intersection of each band with all those of lower index as in \autoref{fig:bandfinger}. This makes the bands significantly more complicated but it ensures that they are disjoint. We can now play the isotopy backwards to see the $\sphere1$ factor of $\SS21$ being simultaneously slid some number of times over the 0-surgered region to produce the curve $\hat P$. This shows the existence of a sequence of bands ${b_1, \ldots, b_n}$ such that if we take $K_0$ to be the unknot (really we should think of it as the $\sphere1$ factor) and attach the bands as in \autoref{prop:unidual} then we obtain $P$.
            \end{proof}
            
            Since this construction happens in $\SS21$, it is naturally compatible with the method we give for computing the dual pattern. Given $K_0$ and $b_1, \ldots, b_n$, lift them to $\SS21$, viewed as zero surgery on $m_0$ as before. Notice that $\lambda_V$ appears as a small meridian of $m_0$, but it does not link with the parallel copies $m_1, \ldots, m_n$ which make up the ends of the bands $b_1, \ldots, b_n$. We slide each band over $m_0$ which causes the end of each band to link once, geometrically, with $\lambda_V$. We retract the bands back onto $K_0$, which effectively attaches the same set of bands to $\lambda_V$. We slide $K_0$ over $m_0$ until it has become an unknot (possibly further tangling the bands now attached to $\lambda_V$). We slide every strand of $\lambda_V$ which passes through $K_0$ over $m_0$ then apply the Gluck twist (spinning the sphere composed of the obvious spanning disk for $m_0$ with the core of the 0-surgery) to correct the framings. A very simple example of this is given in \autoref{fig:basewithbandstodual}.
            
            \begin{figure}[!ht]
                \labellist
                \small\hair 2pt
                 \pinlabel \green{$\hat \lambda_V$} [ ] at 14 248
                 \pinlabel \red{$m_0$} [ ] at 40 240
                 \pinlabel {$m_1$} [l] at 90 211
                 \pinlabel \blue{$K_0$} [ ] at 94 245
                 \pinlabel \orange{$b_1$} [l] at 136 273
                 \pinlabel \blue{$(+2)$} [l] at 296 267
                 \pinlabel \green{$(-2)$} [ ] at 17 107
                 \pinlabel \purple{Gluck twisting} [t] at 310 16
                 \pinlabel \green{\tiny{$+2$}} [ ] at 348 98
                 \pinlabel \green{$(0)$} [ ] at 332 134
                 \pinlabel \blue{$(0)$} [ ] at 396 111
                 \pinlabel \green{$\hat P\dual$} [l] at 433 124
                 \pinlabel \red{$\hat \mu_{V\dual}$} [l] at 393 87
                 \pinlabel {$\hat P$} [ ] at 47 158
                \endlabellist
                \centering
                \includegraphics{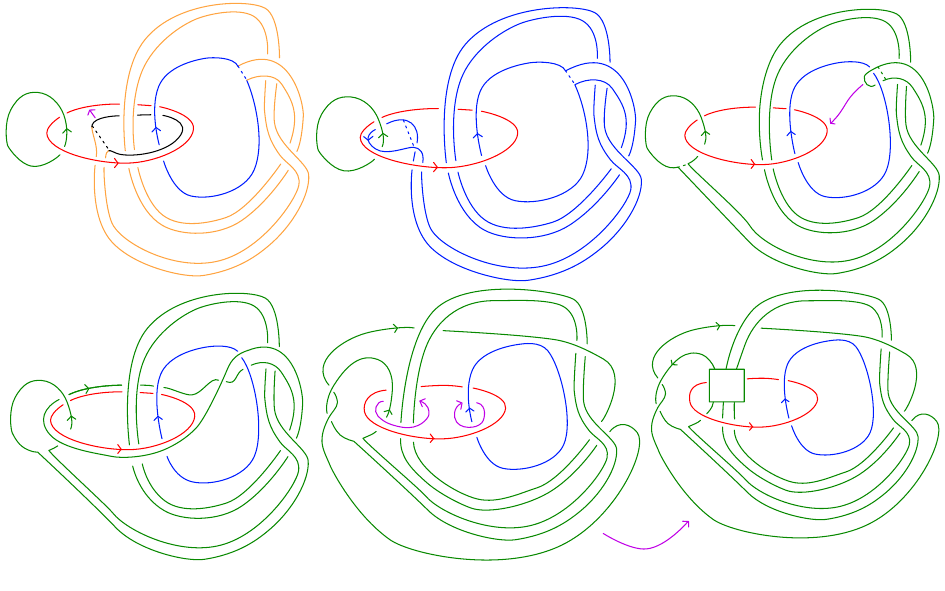}
                \caption{Working out $P\dual$ from a base-knot-with-bands description of $P$.}
                \label{fig:basewithbandstodual}
            \end{figure}

    \subsection{Properties of Dualizable Patterns}

    We will define all of the notation we need to manipulate dualizable patterns and knot invariants which are compatible with them in this section. These objects and their properties form a pleasing calculus which we wish to express in its entirety. We include a singe-page cheat sheet at the end of the paper which can be printed out or clipped separately from the rest. We hope it may serve as a useful reference. 
    
    \subsubsection{Definitions and Notation}
    
    \begin{definition}
        Let $K \subset \sphere3$ be a knot with exterior $E_K$ and let $P \subset V$ be a dualizable pattern. Let $E_K$ come with oriented meridian $\mu_K$ and oriented, null-homologous longitude $\lambda_K$ on its boundary. Observe that $E_K \cup V$ is diffeomorphic to $\sphere3$ if we glue the pair $(\mu_K, \lambda_K)$ to $(\mu_V, \lambda_V)$. Let the knot $P(K)$ be the image of $P$ under this diffeomorphism. We call the knot $P(K)$ the \emph{satellite knot} with \emph{pattern} $P$ and \emph{companion} $K$. 
    \end{definition}
    
    If we have a sequence of patterns $P_1, \ldots, P_k$, we can apply them iteratively to a companion knot $K$ to obtain the knot, $P_1 \circ \ldots \circ P_k (K)$. If we draw this knot in $\sphere3$ in the conventional way, we see that the leftmost pattern in the composition notation, $P_1$, appears to be the innermost `shape' in the diagram of the satellite knot. Conversely, the companion, $K$, appears to be the outermost `shape'. 
    
    \begin{definition}
        Given a pattern $P$, let $P_n$ denote the pattern obtained by taking the image of $P$ under the self-diffeomorphism of $V$ which twists a single $\disk2$ factor $n$ times in the direction specified by the orientation of $\mu_V$. 
    \end{definition}
    
    \begin{definition}
        Given a pattern $P$, let $\bar{P}$ denote the pattern obtained by reversing the orientation of $V$, $P$, $\lambda_V$, and $\mu_V$. We formally define the new orientation to be outward-normal-first just as $P$ was. This is equivalent to reversing all the crossings and orientation arrows in a diagram for $P$. 
    \end{definition}
    
    \begin{definition}
        Given a pattern $P$, let the same symbol, $P$, also denote the knot $P(U)$ in appropriate contexts. For example, if $Q$ is another pattern then $Q(P)$ denotes the knot $Q \circ P(U)$. 
    \end{definition}
    
    \begin{definition}
        Given a knot $K \subset \sphere3$, let $K\pound$, called `K pound', denote the unique wrapping number one pattern such that $K(U) = K$. 
    \end{definition}    
    
    We often combine this definition with the previous one, so that for a general pattern, $P$, $P\pound$ is the unique wrapping number one pattern such that $P\pound(U) = P = P(U)$. The action of a pound pattern is to take the connected sum of the companion knot with $P(U)$, which explains the notation. 
    
    \begin{definition}
        Given a dualizable pattern $P$, let $P\dual$ denote the dual pattern with outward normal first orientation (as usual). 
    \end{definition}
    
    \begin{definition}
        Given a pattern $P$, let $P^n$ denotes the pattern $P$ composed with itself $n$ times. 
    \end{definition}
    
    \begin{definition}
        Given a dualizable pattern $P$, let $P\invs$ denote the pattern $\bar{P\dual}$. The name comes from the fact that $P\invs(P)$ and $P(P\invs)$ are always concordant to the unknot. 
    \end{definition}
    
    Since we often wish to combine one or more of these operations, we adopt the following conventions concerning their order in the absence of parentheses: 
    
    \begin{enumerate}[itemsep=1mm]
        \item $P\dual_n = (P\dual)_n$ not $(P_n)\dual$ (star goes inside twisting)
        \item $\bar{P}_n = (\bar{P})_n$ not $\bar{(P_n)}$ (bar goes inside twisting)
        \item $P_n^m = (P^m)_n$ not $(P_n)^m$ (composition goes inside twisting)
        \item $P\dual\pound = (P\dual)\pound$ not $(P\pound)\dual$ (star goes inside pound)
        \item $\bar{P}\pound = (\bar{P})\pound$ not $\bar{(P\pound)}$ (bar goes inside pound)
        \item $P\pound^m = (P^m)\pound$ not $(P\pound)^m$ (composition goes inside pound)
        \item These conventions are chosen so that if a pattern is written with a sequence of superscripts and a sequence of subscripts, then the operations should be applied in the order: superscripts from left to right followed by subscripts from left to right (bar counts as the leftmost superscript!). 
    \end{enumerate}
    
    Now that we have clearly established the notation we give several relations, all of which are proven, if not in this notation, in \cite{miller_knot_2018}.
    
    \begin{proposition}
        \label{prop:properties}
        The following properties hold for all dualizable patterns $P,Q$, knots $K,J$, and $n, m \in \Z$:
        \begin{enumerate}[label=(\roman*),itemsep=1mm]
            \item $K\pound(J) = K \connsum J = J \connsum K = J\pound(K)$, \hspace{5mm} $P(K) = P(K\pound(U)) = (P \of K\pound)(U) = P \of K\pound$
            \item $(P\dual)\dual = P$, \hspace{5mm} $(P_n)_m = P_{(n+m)}$, \hspace{5mm} $P = P_0$, \hspace{5mm} $\bar{(\bar P)} = P$
            \item $P\dual_n = (P\dual)_n = (P_{-n})\dual$, \hspace{5mm} $\bar{(P_n)} = (\bar{P})_{-n}$
            \item $P\invs = \bar{(P\dual)} = \bar{P}\dual$
            \item $(P \of P\invs)(K) \sim K \sim (P\invs \of P)(K)$
            \item $(P \of Q)\dual = Q\dual \of P\dual$
            \item $(P \of Q)_n = (P_n \of Q_n)$
            \item If the wrapping number of $P$ is one, then: $$P\dual = P, \hspace{5mm} P\invs = \bar{P}, \hspace{5mm} P_n = P, \hspace{5mm} P = P(U)\pound, \hspace{5mm} P(K) = P(U) \connsum K$$
        \end{enumerate}
    \end{proposition}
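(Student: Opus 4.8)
The plan is to deduce every relation in \autoref{prop:properties} from the definitions assembled above, using as black boxes three facts recorded in \cite{miller_knot_2018}: \textbf{(A)} the dualizing homeomorphism $*\colon V_P\to V\dual_{P\dual}$ can be chosen so that its inverse satisfies the three axioms defining a dual pattern for $P\dual$ (so that dualizing is an involution) and so that $*$ is equivariant under the simultaneous reversal of all orientations; \textbf{(B)} for composable patterns the exterior $V_{P\of Q}$ splits along the torus $\boundary\nbhd{Q}$ as $V_Q\cup V_P$ compatibly with the dualizing homeomorphisms of the two factors; and \textbf{(C)} the composite pattern $P\of P\invs$ is concordant, as a pattern in $V$, to the core of $V$ by an explicit ribbon concordance. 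Granting these, the rest is bookkeeping with the orientation and framing conventions fixed earlier in this section. Since those conventions are slightly different from the ones in \cite{miller_knot_2018}, I expect that bookkeeping --- specifically the sign tracking in (iii) and the order reversal in (vi) --- to be the main obstacle, rather than any single geometric idea.

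I would dispatch the near-definitional items first. For (i): $K\pound$ has wrapping number one, so $J\mapsto K\pound(J)$ ties a single local copy of $K(U)=K$ into $J$, i.e.\ forms the connected sum; commutativity and associativity of $\connsum$ give the remaining equalities, and the second string is just unravelling the definitions of $\pound$ and of the satellite construction. For (ii): the twisting self-diffeomorphisms of $V$ realize the $\Z$-action ``$n$ meridional Dehn twists,'' so they compose additively and are the identity at $n=0$; orientation reversal squares to the identity; and $(P\dual)\dual=P$ is fact \textbf{(A)}. For (vii): a meridional full twist of $V$ restricts, under the decomposition $V=V_Q\cup\nbhd{Q}$ with $P\subset\nbhd{Q}\cong V$, to a meridional full twist of the companion core inside $V_Q$ together with a meridional full twist of the copy of $V$ containing $P$ --- that is, to $P_n\of Q_n$.

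Next the sign-sensitive items. For (iii): conjugating a meridional twist of $V$ by $*$ yields a meridional twist of $V\dual$ supported on the opposite boundary torus, and the sign $*(\mu_V)=-\mu_{P\dual}$ from the axioms reverses its direction, giving $P\dual_n=(P_{-n})\dual$ (the identity $P\dual_n=(P\dual)_n$ being the parenthesization convention); and $\bar{(P_n)}=(\bar P)_{-n}$ holds because reversing the orientation of $V$, equivalently of $\mu_V$, reverses the twisting direction. For (iv): $P\invs=\bar{(P\dual)}$ is the definition, while $\bar{(P\dual)}=\bar P\dual$ is the orientation-reversal equivariance of \textbf{(A)}. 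For (vi): with \textbf{(B)}, Dehn-fill the outer boundary of $V_{P\of Q}=V_Q\cup V_P$; the filling turns $V_Q$ into the dual solid torus of $Q$ while interchanging the inner and outer roles of the factors, so the formerly outer piece becomes the inner solid torus and the composite dualizes to $Q\dual$ nested inside $P\dual$, i.e.\ $Q\dual\of P\dual$. For (v): applying the companion $K$ to the pattern concordance of \textbf{(C)} gives $(P\of P\invs)(K)\sim (\mathrm{core})(K)=K$ --- the orientation reversal built into $P\invs=\bar{(P\dual)}$ is exactly what makes the relevant bands cancel in $\sphere3\times I$ --- and the other inclusion $(P\invs\of P)(K)\sim K$ follows by applying \textbf{(C)} to $P\invs$, using $(P\invs)\invs=P$ from (ii) and (iv).

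Finally, for (viii): wrapping number one forces winding number $\pm1$ and complexity $0$, so $P$ meets a meridian disk in a single strand and a meridional full twist can be isotoped off it, giving $P_n=P$; uniqueness in the definition of $\pound$ gives $P=P(U)\pound$, which acts by connected sum, so $P(K)=P(U)\connsum K$; and since the mirror-reverse of a connected summand inverts it up to concordance, $P\invs=\bar P$, whence $P\dual=\bar{(P\invs)}=\bar{(\bar P)}=P$. Throughout, the only step needing genuine care is keeping the bar in \textbf{(C)} (and in $P\invs=\bar{(P\dual)}$) on the correct factor; every other verification is a formal consequence of \autoref{prop:ss}, \autoref{cor:pione}, and the conventions set up above.
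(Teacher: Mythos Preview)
The paper does not supply its own proof of \autoref{prop:properties}: it states the relations and attributes all of them to \cite{miller_knot_2018} (``all of which are proven, if not in this notation, in \cite{miller_knot_2018}''). Your proposal therefore already goes further than the paper, by trying to unpack the arguments rather than simply citing them. Most of your sketches are fine and match what one finds in Miller--Piccirillo, but two points deserve a closer look.

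For (vii), your claim that a meridional twist of the outer $V$ restricts to a meridional twist of the inner copy $\nbhd{Q}\cong V$ is only correct because dualizable patterns have winding number one. In general the outer twist $\tau_n$ carries $\lambda_Q$ to $\lambda_{Q_n}+n\,w(Q)^2\,\mu_{Q_n}$, so the induced self-map of the inner solid torus is $\tau_{n\,w(Q)^2}$, not $\tau_n$; the identity $(P\of Q)_n=P_n\of Q_n$ genuinely uses $w(Q)=1$. You should make that dependence explicit.

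For (viii), your derivation of $P\dual=P$ is circular. You argue that ``the mirror-reverse of a connected summand inverts it up to concordance,'' hence $P\invs=\bar P$, hence $P\dual=P$. But $P\invs$ is by definition the specific pattern $\bar{P\dual}$, not merely \emph{some} concordance inverse, so the concordance fact $\bar K\connsum K$ slice only gives $\bar P\sim P\invs$, not equality of patterns; and in any case $P\invs=\bar P$ and $P\dual=P$ are equivalent statements, so neither can be used to deduce the other. A direct argument is needed: for $P=K\pound$ one runs the dualization procedure in $\SS21$ and observes that the light-bulb isotopy untying the local $K$ from $\hat P$ simultaneously ties the same local $K$ into $\hat{\lambda_V}$, giving $(K\pound)\dual=K\pound$.
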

    
    This proposition shows that conventions (3) and (5) are actually unnecessary since the operations they govern commute. Nevertheless, we include these conventions for completeness. 
    
    \subsubsection{Traces and Surgeries}
    
    \begin{definition}
    \label{def:trace}
    Let $K \subset \sphere3$ be an oriented knot, then let $X_n(K) := h^0_4 \cup h^2_4(K,n)$, the union of a 4-dimensional 0-handle and a 4-dimensional 2-handle attached along the knot $K$ with $n$-framing. We denote the boundary of $X_n(K)$ in the usual way by $\sphere3_n(K)$. 
    \end{definition}

    \begin{theorem}
    \label{thm:dualtrace}
    $X_0(P) \diffeo X_0(P\dual)$ for any dualizable pattern $P$. It follows that $\sphere3_0(P) \diffeo \sphere3_0(P\dual)$. 
    \end{theorem}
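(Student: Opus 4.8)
The plan is to deduce the theorem from the stronger claim that there is an orientation-preserving diffeomorphism $\sphere3_0(P(U)) \to \sphere3_0(P\dual(U))$ carrying the meridian of the knot $P(U)$ to the meridian of $P\dual(U)$ as \emph{framed} curves. Indeed, turning the handle decomposition $X_0(K) = \disk4 \cup h^2$ upside down writes $X_0(K) \diffeo \sphere3_0(K)\times\interval \cup h^2 \cup h^4$, with the $2$-handle attached along $\mu_K \subset \sphere3_0(K)$ with its surface framing (well defined because $\mu_K$ lies on the torus $\partial\nbhd K$; it is the framing along which surgery on $\mu_K$ in $\sphere3_0(K)$ recovers $\sphere3$) and with the $4$-handle then forced. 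So a framed-meridian-preserving diffeomorphism of $0$-surgeries upgrades to one of $0$-traces, and forgetting the meridian afterwards gives $\sphere3_0(P)\diffeo\sphere3_0(P\dual)$. (As in the conventions above, $P$ and $P\dual$ here also denote the knots $P(U), P\dual(U) \subset \sphere3$.)

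First I would construct the diffeomorphism $\Phi$. From the satellite decomposition $\sphere3 = \nbhd P \cup_{\partial\nbhd P} V_P \cup_{\partial V} E_U$, with $E_U$ the solid-torus exterior of the companion unknot (meridian $\lambda_U$), we have $\sphere3_0(P(U)) = (\text{surgery torus})\cup_{\partial\nbhd P} V_P \cup_{\partial V} E_U$, the surgery torus attached along the $\lambda_P$-slope. Apply $* \colon V_P \to V\dual_{P\dual}$ and transport the two solid tori along it. By conditions (i)--(iii) defining a dualizable pattern, $*$ swaps the inner and outer tori and sends $(\mu_V,\lambda_V) \mapsto (-\mu_{P\dual},\lambda_{P\dual})$ and $(\mu_P,\lambda_P)\mapsto(-\mu_{V\dual},\lambda_{V\dual})$. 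Hence the surgery torus is now glued to $\partial\nbhd{P\dual}$ along $\lambda_{P\dual}$, while $E_U$ (meridian $\lambda_U$) is glued to $\partial V\dual$ with $\lambda_U \mapsto \lambda_{V\dual}$: this latter gluing closes $V\dual$ up so that its core becomes an unknot with Seifert longitude $\lambda_{V\dual}$, i.e.\ it reconstitutes the exterior $E_{P\dual(U)}$, and the surgery torus then performs $0$-surgery on $P\dual(U)$. So $\Phi \colon \sphere3_0(P(U)) \to \sphere3_0(P\dual(U))$ exists, and it sends the meridian $\mu_P$ of $P(U)$ to $-\mu_{V\dual}$, a curve lying on the torus $\partial V\dual$ inside $\sphere3_0(P\dual(U))$.

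It then remains to show $-\mu_{V\dual}$ is framed-isotopic in $\sphere3_0(P\dual(U))$ to the meridian $\mu_{P\dual}$ of $P\dual(U)$, which lives on the \emph{other} torus $\partial\nbhd{P\dual}$; this is where dualizability is used a second time (note $P\dual$ is itself dualizable since $(P\dual)\dual = P$). Passing to $\double(V\dual) = \SS21$, the manifold $\sphere3_0(P\dual(U))$ is $\SS21$ surgered along $\widehat{P\dual}$ (with framing $\lambda_{P\dual}$) and along $\widehat{\lambda_{V\dual}} = \sphere1\times\set{pt}$ (with its surface framing), two disjoint curves each isotopic to the $\sphere1$-factor; moreover $\mu_{P\dual}$ is a meridian of $\widehat{P\dual}$, while $\mu_{V\dual}$ --- the equator of the $\sphere2$-factor, hence a small circle in an $\sphere2$ slice --- is isotopic to a meridian of $\widehat{\lambda_{V\dual}}$. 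By \autoref{prop:ss} there is an ambient isotopy of $\SS21$ carrying $\widehat{P\dual}$ onto $\widehat{\lambda_{V\dual}}$; I would promote it to an ambient isotopy of the pair $(\SS21, \text{a meridian of } \sphere1\times\set{pt})$, using Gluck's theorem --- the only orientation-preserving self-diffeomorphisms of $\SS21$ up to isotopy are the identity and the Gluck twist, both of which fix the isotopy class of such a meridian --- and noting that the $\pm 2$ framing change the Gluck twist inflicts on $\widehat{P\dual}$ is exactly the correction already built into the definition of $P\dual$. This identifies $\mu_{P\dual}$ with $-\mu_{V\dual}$ as framed curves, completing the argument.

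The hard part is this last step: pinning down that the two meridians agree \emph{together with their framings}. Their bare isotopy classes are easy to match once one is inside $\SS21$, but the framing is delicate --- being off by a Gluck twist would produce a homeomorphism of the $0$-surgeries that does \emph{not} extend over the $4$-dimensional traces --- so the crux is precisely the bookkeeping of the $\pm 2$ framing ambiguity in the dualizing procedure and its Gluck-twist correction. An alternative, possibly cleaner, organization is to verify directly that $*' \circ *$ is isotopic rel boundary to $\id_{V_P}$, where $*'$ is the dualizing homeomorphism for $P\dual$; this makes $\Phi$ inverse, up to isotopy, to the corresponding map $\Phi'$ built from $*'$, and the framed meridians then match automatically.
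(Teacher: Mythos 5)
Your step (1) (a $0$-surgery diffeomorphism carrying the framed meridian to the framed meridian extends over the traces) and your step (2) (the construction of $\Phi$ by transporting the two fillings along $*$) are both correct, and $\Phi$ is exactly the $3$-manifold diffeomorphism the paper has in mind. The gap is in step (3), and I do not think it is repairable as stated. The ambient isotopy of $\SS21$ supplied by \autoref{prop:ss} moves the surgery curve $\hat{P\dual}$ itself and exercises no control over where it sends $\hat{\lambda_{V\dual}}$; consequently it does not preserve the framed surgery link $\hat{P\dual} \cup \hat{\lambda_{V\dual}}$ and therefore induces no map at all -- let alone an isotopy of meridians -- on the surgered manifold $\sphere3_0(P\dual(U))$. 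Knowing that $\mu_{P\dual}$ and $-\mu_{V\dual}$ are isotopic \emph{in} $\SS21$ is vacuous (any two meridians of curves isotopic to $\sphere1\times\set{pt}$ are), and it says nothing about their isotopy class in the surgered manifold, which is a different $3$-manifold. Worse, the intermediate statement you are after -- that the $0$-surgery diffeomorphism can be corrected to carry $\mu_{P(U)}$ to $\mu_{P\dual(U)}$ -- is strictly stronger than the theorem and does not follow from dualizability: $\Phi(\mu_{P(U)})$ is a meridian of the \emph{companion} unknot of $P\dual(U)$, and whether that curve is isotopic to the meridian of the satellite knot in the $0$-surgery is precisely the delicate ``does the $0$-surgery homeomorphism respect meridians'' question, which in general fails (and must be allowed to fail, or constructions of this type could never produce non-isotopic, non-concordant knots with diffeomorphic traces). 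In particular, if the identification came from a symmetry of $\SS21$ interchanging the two framed components of $\hat{P\dual}\cup\hat{\lambda_{V\dual}}$, surgering one component and tracking the other would show $P(U)$ and $P\dual(U)$ are isotopic knots -- exactly what the examples in this paper are designed to avoid. Your fallback suggestion (checking $*'\circ *\simeq\id_{V_P}$) does not help either: it only shows $\Phi$ and $\Phi'$ are inverse, not that either one matches meridians.

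The intended argument avoids the meridian issue entirely by extending $*$ over the $4$-dimensional fillings rather than factoring through the boundary. Concretely, let $W := (\SB13) \cup h^2(\hat{P},\lambda_P) \cup h^2(\hat{\lambda_V},\mathrm{prod})$, a single $4$-manifold with two $2$-handles attached along the link $\hat{P}\cup\hat{\lambda_V} \subset \SS21 = \boundary(\SB13)$. Cancelling the $1$-handle against the $2$-handle along $\hat{\lambda_V} = \sphere1\times\set{pt}$ converts $\SS21$ into $\sphere3$ and carries $\hat P$ to the $0$-framed knot $P(U)$, so $W \diffeo X_0(P(U))$. Since $P$ is dualizable, $\hat{P}$ is also isotopic to $\sphere1\times\set{pt}$, so the $1$-handle may instead be cancelled against the $2$-handle along $\hat{P}$; the Gluck-twist correction built into the definition of $P\dual$ is exactly what normalizes the framing so that the surviving $2$-handle is attached along $P\dual(U)$ with framing $0$, giving $W \diffeo X_0(P\dual(U))$. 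The two $2$-handles of $W$ (hence the two meridians) are never identified with one another -- each plays the role of ``the knot's $2$-handle'' in one decomposition and of ``the cancelling $2$-handle'' in the other -- which is why no meridian-matching statement is needed, or true, here.
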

    
    The proof consists of checking that both three manifolds can be obtained as fillings of $V_P$ and $V\dual_{P\dual}$ respectively, and that $*$ extends to these fillings. In fact this diffeomorphism extends to a diffeomorphism of the 4-dimensional traces. Understanding this is a good exercise for the reader, and the details are written in \cite{miller_knot_2018}. Once the reader understands the argument for $n=0$, it should be reasonably clear how to adjust the result to arbitrary $n$. The stronger theorem is:
    
    \begin{theorem}
    \label{thm:dualntrace}
    $X_n(P) \diffeo X_n(P\dual_n)$ for any dualizable pattern $P$ and $n \in \Z$. Thus $\sphere3_n(P) \diffeo \sphere3_n(P\dual_n)$. 
    \end{theorem}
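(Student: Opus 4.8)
The plan is to reprove \autoref{thm:dualtrace} while carrying the framings along. Recall the shape of that proof. Writing $P = P(U)\subset\sphere3 = \nbhd P\cup V_P\cup E_U$, with $E_U$ the exterior of the unknot (a solid torus), the $0$-trace decomposes as
\[
  X_0(P)\ \diffeo\ (V_P\cross I)\ \cup_{(\boundary\nbhd P)\cross I}\ \mathcal H_0\ \cup_{(\boundary V)\cross I}\ \mathcal B,
\]
where $\mathcal B\diffeo\ball4$ is built from the $0$-handle of $X_0$ together with $E_U\cross I$, and $\mathcal H_0 = (\nbhd P\cross I)\cup h^2(P,0)\diffeo\ball4$ is a neighborhood of the $2$-handle. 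Both outer pieces are ``standard'': each is a $\ball4$ whose boundary $\sphere3$ carries a Heegaard torus --- $(\boundary V)\cross\{t\}$ for $\mathcal B$, $(\boundary\nbhd P)\cross\{t\}$ for $\mathcal H_0$ --- whose two complementary solid tori have prescribed disk slopes: $\lambda_V$ (which bounds in $E_U$) and $\mu_V$ for $\mathcal B$; $\lambda_P$ (which bounds the core disk of the $2$-handle) and $\mu_P$ for $\mathcal H_0$. Since $*$ swaps the inner and outer tori of $V_P$ and, by properties (ii) and (iii) in the definition of a dualizable pattern, sends $(\mu_P,\lambda_P)\mapsto(-\mu_{V\dual},\lambda_{V\dual})$ and $(\mu_V,\lambda_V)\mapsto(-\mu_{P\dual},\lambda_{P\dual})$, the map $*\cross\id_I$ carries each of these slopes on the $V_P$ side to the matching slope on the $V\dual_{P\dual}$ side; it therefore extends over the standard pieces (first over the two complementary solid tori, since it matches meridians, to a self-diffeomorphism of $\sphere3$, then over $\ball4$), giving $X_0(P)\diffeo X_0(P\dual)$.

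For general $n$ the only change is on the inner side: the $2$-handle piece of $X_n(P)$ is $\mathcal H_n = (\nbhd P\cross I)\cup h^2(P,n)$, still $\diffeo\ball4$ (a $2$-handle meeting the belt sphere of a $1$-handle once cancels it regardless of framing), but the core-disk slope on $\boundary\nbhd P$ is now the $n$-framed longitude $\lambda_P + n\mu_P$ in place of $\lambda_P$; likewise the core-disk slope for $X_n(P\dual_n)$ is $\lambda_{P\dual_n}+n\mu_{P\dual_n}$ on $\boundary\nbhd{P\dual_n}$. The idea is to replace $*$ by the composite
\[
  \Psi\ :=\ \tau_n\of *\ \colon\ V_P\ \longrightarrow\ V\dual_{P\dual}\ \longrightarrow\ V\dual_{P\dual_n},
\]
where $\tau_n$ is the restriction to pattern exteriors of the meridional twist of $V\dual$ that realizes $P\dual_n = \tau_n(P\dual)$. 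As $\tau_n$ preserves each boundary torus of $V\dual$ and commutes with the inner--outer swap, $\Psi$ still exchanges the inner and outer tori. On the inner torus $\boundary\nbhd P$ of $V_P$, using $\tau_n(\mu_{V\dual}) = \mu_{V\dual}$ and $\tau_n(\lambda_{V\dual}) = \lambda_{V\dual}+n\mu_{V\dual}$, one gets $\Psi(\mu_P) = -\mu_{V\dual}$ and $\Psi(\lambda_P+n\mu_P) = \lambda_{V\dual}$ --- exactly the two slopes $\mathcal B$ carries on $\boundary V\dual$. On the outer torus $\boundary V$ of $V_P$, using $\tau_n(\mu_{P\dual}) = \mu_{P\dual_n}$ together with the fact that an $n$-fold meridional twist shifts the Seifert framing of a winding-number-one pattern by exactly $n$ (so $\tau_n(\lambda_{P\dual}) = \lambda_{P\dual_n}+n\mu_{P\dual_n}$), one gets $\Psi(\mu_V) = -\mu_{P\dual_n}$ and $\Psi(\lambda_V) = \lambda_{P\dual_n}+n\mu_{P\dual_n}$ --- exactly the two slopes $\mathcal H_n'$ carries on $\boundary\nbhd{P\dual_n}$. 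The $n$ coming from the twist on one side thus cancels precisely against the $n$ coming from the surgery framing on the other.

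Consequently $\Psi\cross\id_I$ carries the pieces of $X_n(P)$ to those of $X_n(P\dual_n)$: on $V_P\cross I$ it is a diffeomorphism onto $V\dual_{P\dual_n}\cross I$ by construction, and on the torus$\times I$ interfaces it matches every complementary disk slope, so (extending first over the complementary solid tori of each boundary $\sphere3$, then over $\ball4$) it extends to diffeomorphisms $\mathcal H_n\to\mathcal B$ and $\mathcal B\to\mathcal H_n'$. Gluing gives $X_n(P)\diffeo X_n(P\dual_n)$, and restricting to boundaries gives $\sphere3_n(P)\diffeo\sphere3_n(P\dual_n)$. The one genuinely delicate point is the orientation-and-framing bookkeeping: one must record that every dualizable pattern has winding number $\pm1$ (so the meridional twist shifts the Seifert framing by $\pm n$ rather than by $n$ times some larger square) and pin down every sign so that the cancellations above produce $X_n(P\dual_n)$ and not, say, $X_n(P\dual_{-n})$; making the identifications $\mathcal H_n\diffeo\mathcal B\diffeo\ball4$ explicit enough to justify extending a slope-matching self-diffeomorphism of their boundary $3$-spheres over the $4$-balls is routine once the $n = 0$ case is understood. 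A cleaner route that sidesteps some of the sign-chasing is to prove the boundary statement $\sphere3_n(P)\diffeo\sphere3_n(P\dual_n)$ first --- where ``the surgery dual knot'' and ``the slope on it yielding $\sphere3$'' are intrinsic --- and then invoke the reconstruction of $X_n(K)$ from the pair $(\sphere3_n(K),\mu_K)$.
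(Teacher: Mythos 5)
Your main argument is correct and is essentially the paper's (and Miller--Piccirillo's) approach: extend the dualizing homeomorphism $*$, corrected by the meridional twist $\tau_n$, across the standard complementary pieces by checking that it matches the disk slopes on each interface torus. Your slope computations $\Psi(\lambda_P+n\mu_P)=\lambda_{V\dual}$ and $\Psi(\lambda_V)=\lambda_{P\dual_n}+n\mu_{P\dual_n}$ are exactly the ``two sign changes'' the paper alludes to, and you correctly use that dualizable patterns have winding number $\pm 1$ so the twist shifts the Seifert framing by $n$ and not $nw^2$ for larger $w$.

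Two caveats. First, a cosmetic one: the decomposition $X_n(P)\diffeo \mathcal B\cup(V_P\cross I)\cup\mathcal H_n$ with pieces meeting \emph{only} along $(\boundary V)\cross I$ and $(\boundary\nbhd P)\cross I$ is not literally what the collar construction produces --- the $0$-handle has all of $\sphere3=E_U\cup V_P\cup\nbhd P$ as its boundary, so $\mathcal B$ as you define it also meets $V_P\cross I$ along $V_P\cross\set{0}$ and meets $\mathcal H_n$ along $\nbhd P\cross\set{0}$. This is repairable (regroup the pieces, or round corners), and the slope-matching argument is unaffected, but as stated the gluing pattern is wrong. Second, and more substantively, the ``cleaner route'' you propose at the end does not work: the diffeomorphism you have constructed carries the surgery dual knot of $\sphere3_n(P)$ (the core of the filling solid torus) to the core of the unknot-exterior piece of $\sphere3_n(P\dual_n)$, i.e.\ to the axis of the companion solid torus $V\dual$, \emph{not} to the surgery dual knot $\mu_{P\dual_n}$. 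That is the whole point of the construction --- the trace diffeomorphism interchanges the $2$-handle with the complementary $\ball4$ rather than preserving the handle structure --- so the reconstruction of $X_n(K)$ from $(\sphere3_n(K),\mu_K)$ cannot be invoked with this map, and the surgered $3$-manifold alone does not determine the trace. The direct $4$-dimensional extension is the proof; the boundary-first route would require extra input.
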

    
    The the reader may be surprised that the left hand side is not $X_n(P\dual_{-n})$. The trick is that the sign of $n$ changes twice during the natural process by which we construct the diffeomorphism. The first change comes from the fact that $* : V_P \to V\dual_{P\dual}$ reverses the orientations of the meridians but not the longitudes, which sends the framing $n \mapsto -n$. This shows $X_n(P) \diffeo X_n((P_{-n})\dual)$. The second sign change comes from reversing the order of the twisting and the dualization. 
    
    Already, it should be clear that combining \autoref{prop:unidual}, \autoref{prop:properties}, and \autoref{thm:dualntrace} gives a formidable repertoire of tools with which we can construct pairs of knots $(K,J)$ such that $X_n(K) \diffeo X_n(J)$ for some fixed $n \in \Z$. When $(K,J)$ share this property, we will call $J$ an \emph{n-retrace} of $K$, and the act of constructing $J$ from $K$ will be called \emph{n-retracing} $K$. 
    
    \subsubsection{Compatible Invariants}
    
    \begin{definition}
    \label{def:compatibleinvariant}
    Let $\mathscr I$ be an knot invariant taking values in an Abelian group $\mathcal A$ (usually $\Z$ in practice). We say $\mathscr I$ is \emph{compatible} with dualizable patterns iff
        \begin{enumerate}[itemsep=2mm]
            \item $\mathscr I : \mathcal C \to \mathcal A$ is a well-defined homomorphism from the smooth (or TOP) concordance group.
            \item If $X_0(K) \diffeo X_0(J)$ then $\mathscr I(K) = \mathscr I(J)$. 
        \end{enumerate}
    for any two knots $K,J \subset \sphere3$.
    \end{definition}
    
    The point is that these two properties combine to give a satellite formula for $\mathscr I$ whenever the pattern used in the satellite construction is dualizable. 
    
    \begin{theorem}
    \label{thm:satformula}
    Let $\mathscr I$ be a knot invariant which is compatible with dualizable patterns then for any dualizable pattern $P$ and companion knot $K$ we have, $$\mathscr I(P_n(K)) = \mathscr I(K) + \mathscr I(P_n) $$ in particular for $n=0$ we have, $$ \mathscr I(P(K)) = \mathscr I(K) + \mathscr I(P)$$ where the pattern with no input should be interpreted as the pattern applied to the unknot. 
    \end{theorem}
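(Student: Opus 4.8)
The plan is to deduce this from Theorem~\ref{thm:dualtrace} and the two defining properties of a compatible invariant, after first absorbing the companion $K$ into the pattern so that a statement about dualizable patterns becomes available. I would begin by reducing to $n=0$: since $P$ dualizable implies $P_n$ is dualizable for every $n$ (this is already implicit in Proposition~\ref{prop:properties}(iii) and in the statement of Theorem~\ref{thm:dualntrace}), the formula $\mathscr I(P_n(K)) = \mathscr I(K) + \mathscr I(P_n)$ is exactly the $n=0$ formula applied to the dualizable pattern $P_n$ in place of $P$. So it suffices to prove $\mathscr I(P(K)) = \mathscr I(K) + \mathscr I(P)$ for every dualizable pattern $P$ and every companion $K$, where as usual $\mathscr I(P)$ abbreviates $\mathscr I(P(U))$.

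The key step is to rewrite $P(K) = (P \of K\pound)(U)$ via Proposition~\ref{prop:properties}(i), where $K\pound$ is the wrapping-number-one pattern with $K\pound(U) = K$. I would first check that $K\pound$ is itself dualizable: it has winding number one, and a meridian of the knot $K\pound \subset V$ is isotopic inside $V_{K\pound}$ to $\mu_V$ (slide a meridian disk of the solid torus that misses the ball containing the connected summand), so $\mu_{K\pound}\in\llangle\mu_V\rrangle$ and Corollary~\ref{cor:pione} applies; moreover $(K\pound)\dual = K\pound$ by Proposition~\ref{prop:properties}(viii). Hence $P \of K\pound$ is dualizable, and by Proposition~\ref{prop:properties}(vi) its dual is $(P\of K\pound)\dual = (K\pound)\dual \of P\dual = K\pound \of P\dual$.

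Now I would run this through the machinery. Applying Theorem~\ref{thm:dualtrace} to the dualizable pattern $P \of K\pound$ gives
$$ X_0(P(K)) = X_0\bigl((P\of K\pound)(U)\bigr) \diffeo X_0\bigl((P\of K\pound)\dual(U)\bigr) = X_0\bigl((K\pound\of P\dual)(U)\bigr) = X_0\bigl(K \connsum P\dual(U)\bigr), $$
the last equality being Proposition~\ref{prop:properties}(i) once more. Since $\mathscr I$ is compatible, the second requirement of Definition~\ref{def:compatibleinvariant} gives $\mathscr I(P(K)) = \mathscr I(K \connsum P\dual(U))$, and the first requirement (that $\mathscr I$ is a homomorphism out of the concordance group, whose operation is connected sum) gives $\mathscr I(K\connsum P\dual(U)) = \mathscr I(K) + \mathscr I(P\dual(U))$. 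Finally, Theorem~\ref{thm:dualtrace} applied to $P$ itself yields $X_0(P(U)) \diffeo X_0(P\dual(U))$, so the second requirement forces $\mathscr I(P) = \mathscr I(P\dual)$; combining, $\mathscr I(P(K)) = \mathscr I(K) + \mathscr I(P)$, as desired.

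I do not expect a genuine obstacle here: the argument is an assembly of results already stated, and the only idea is the substitution $P(K) = (P\of K\pound)(U)$, which turns an arbitrary satellite into a dualizable pattern evaluated at the unknot and thereby makes Theorem~\ref{thm:dualtrace} applicable. The one point that warrants a sentence of genuine proof rather than a bare citation is the dualizability (and self-duality) of $K\pound$; after that, everything is bookkeeping with Proposition~\ref{prop:properties}.
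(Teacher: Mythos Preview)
Your argument is correct and follows essentially the same route as the paper: absorb the companion via $P(K)=(P\of K\pound)(U)$, dualize using Proposition~\ref{prop:properties}(vi) together with the self-duality of $K\pound$, apply Theorem~\ref{thm:dualtrace}, and then invoke both parts of Definition~\ref{def:compatibleinvariant}, finishing with $\mathscr I(P^\ast)=\mathscr I(P)$. The only cosmetic difference is that you first reduce to $n=0$ whereas the paper carries $P_n$ through the computation directly; your explicit verification that $K\pound$ is dualizable via Corollary~\ref{cor:pione} is a nice touch that the paper leaves implicit in Proposition~\ref{prop:properties}(viii).
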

    
    \begin{proof}
    Assume the hypotheses of the theorem. We begin by calculating:
        \begin{align*}
            X_0(P_n(K)) &\diffeo X_0(P_n \of K\pound)\\
                        &\diffeo X_0((P_n \of K\pound)\dual)\\
                        &\diffeo X_0(K\pound\dual \of (P_n)\dual)\\
                        &\diffeo X_0(K\pound \of (P_n)\dual)\\
                        &\diffeo X_0(K \connsum (P_n)\dual)
        \end{align*}
    therefore $K \connsum (P_n)\dual$ is a 0-retrace of $P_n(K)$, and it follows from the two properties included in the definition of compatibility that, 
    $$
        \mathscr I(P_n(K)) = \mathscr I(K \connsum (P_n)\dual) = \mathscr I(K) + \mathscr I((P_n)\dual) = \sI(K) + \sI(P_n) 
    $$
    The last equality comes from the fact that $X_0((P_n)\dual) \diffeo X_0(P_n)$ so $\sI((P_n)\dual) = \sI(P_n)$. Taking $n=0$ recovers the second statement in the theorem. 
    \end{proof}
    
    So when is an invariant compatible with dualizable patterns? The best known example of such an invariant is the classical signature and, more generally, the Levine-Tristram signature at a root of unity, $\sigma_K(\omega)$. Both are compatible with dualizable patterns, which can be easily deduced from two of their many, equivalent definitions. That both are concordance homomorphisms comes from defining them as the signatures of certain branched covers of the knot $K$ with coefficients computed in $\Q[\omega]$. One checks that connect-summing the knots corresponds to connect-summing the covers and then quotes Novikov additivity for signatures. That both are invariant under 0-retracing follows from defining the signature in terms of the $\omega$-twisted cohomology of $\sphere3_0(K)$. The details can be found in the excellent survey article \cite{conway_levine-tristram_2021}.
    
    We end the section by encouraging the reader to make use of the one-page cheat sheet (\autopageref{cheatsheet}) which we have appended to this paper. It summarizes the whole calculus of dualizable patterns including the construction and dualization process. We recommend the reader to separate it from the rest of the paper and use it as a side-by-side reference to follow the calculations in the coming sections. 

\newpage
\section{Shake Genus and Complexity Bounds}

    \subsection{Definitions of the Shake-Genus and Complexity}

        \begin{definition}
        \label{def:shgenus}
            We define the \emph{$n$-shake-genus} of a knot $K$, denoted $g^{sh}_n(K)$, to be the minimum genus over all orientable surfaces embedded in $X_n(K)$ representing the canonical generator of $H_2(K,\Z)$ (we assume $K$ is oriented). A knot is \emph{$n$-shake-slice} iff its $n$-shake-genus is zero. 
        \end{definition}

        This definition is well known already in the literature and leads to other definitions in a similar spirit such as $n$-shake-concordance, see \cite{cochran_shake_2016}. 

        \begin{definition}
        \label{def:complexity}
            Let $K$ be an $n$-shake-slice knot. We define the \emph{$n$-complexity} of $K$, denoted by $c(K,n)$ or just $c(K)$ when $n$ is understood from the context, to be the minimum non-negative integer, $c$, satisfying,
            $$ 
                \abs{S \cap \cocore(h^2_4(K,n))} = 1 + 2c 
            $$
            occurring among all smoothly embedded spheres, $S$, transverse to the cocore and generating the second homology of $X_n(K)$. We can also distinguish between whether the sphere $S$ is smoothly embedded or topologically-locally-flatly embedded, which allows us to define smooth and topological complexity respectively.
        \end{definition}

        We note that this definition is a renormalization of the \emph{n-shaking-number} \cite{feller_embedding_2021}. Since the oriented intersection number $S \cdot \cocore(h^2_4) = 1$ for all $S$ we see that the extra intersections must occur in oppositely oriented pairs and $c$ is normalized to count the number of pairs. This definition was inspired by the similar notion of complexity for h-cobordisms defined in \cite{morgan_complexity_1999}. 

    \subsection{4-Genus Bounds Complexity}

        \begin{proposition}\cite{feller_embedding_2021}[Prop. 8.8]
            \label{prop:genusboundscomplexity}
            Let $K$ be an $n$-shake-slice knot then $g^4(K) \le c(K)$. This holds both in the smooth and topological setting. 
        \end{proposition}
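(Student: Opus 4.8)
The plan is to cut a complexity-realizing sphere open along the $2$-handle and reassemble its $\ball4$-part into a single low-genus surface for $K$. Fix a smoothly embedded sphere $S$ that generates $H_2(X_n(K))$ and realizes the minimal complexity $c := c(K)$. By the standard transversality argument recalled in the introduction (see \cite{feller_embedding_2021}) we may isotope $S$ so that $S \cap h^2_4(K,n)$ is exactly a disjoint union of $1 + 2c$ parallel copies of the core disk $\disk2 \times \set{pt}$. Put $\Sigma := S \cap \ball4$. Intersecting $S$ with the attaching region $\nbhd K \subset \sphere3$ of the $2$-handle shows that $\boundary\Sigma$ is the corresponding collection of $1 + 2c$ parallel push-offs of $K$ inside $\nbhd K$ (their mutual framing is $n$, but only their knot type will be used). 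Since $S$ is connected and is recovered from $\Sigma$ by capping each boundary circle with one core disk, $\Sigma$ is connected; moreover $\chi(\Sigma) = \chi(S) - (1 + 2c) = 2 - (1+2c) = 1 - 2c$, so having $1 + 2c$ boundary components, $\Sigma$ is planar.

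The key structural input is the orientation bookkeeping. Each core disk meets $\cocore(h^2_4(K,n))$ in one point, of sign equal to the orientation of that disk relative to the core, and the signed total is the algebraic intersection number $S \cdot \cocore = \pm 1$; since there are $1 + 2c$ disks, $c+1$ of them carry one orientation and $c$ the other. Correspondingly, $c+1$ of the boundary circles of $\Sigma$ are push-offs of $K$ and the remaining $c$ are push-offs of the reverse knot. Pair each of these $c$ reversed push-offs with a distinct one of the $c+1$ push-offs of $K$. Working in $\nbhd K = \sphere1 \times \disk2$, where each push-off lies over an interior point of $\disk2$, choose for each pair an embedded arc in $\disk2$ joining the two relevant points; by general position the $c$ arcs can be taken to have pairwise disjoint interiors and to miss all the other marked points. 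Their products with $\sphere1$ are $c$ pairwise disjoint embedded annuli in $\nbhd K$; pushing their interiors slightly into $\ball4$ rel boundary gives $c$ disjoint properly embedded annuli meeting $\Sigma$ exactly along the $2c$ paired boundary circles. Because each annulus joins two circles carrying opposite $\Sigma$-orientations, it can be oriented so that the union is an oriented surface.

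Let $\Sigma'$ be $\Sigma$ with these $c$ annuli attached. Then $\Sigma'$ is a connected, oriented, properly embedded surface in $\ball4$ whose boundary is a single push-off of $K$, hence a copy of $K$. Gluing oriented surfaces along circles preserves the Euler characteristic and each annulus has $\chi = 0$, so $\chi(\Sigma') = \chi(\Sigma) = 1 - 2c$; with one boundary component, $\Sigma'$ has genus $c$. Therefore $g^4(K) \le c = c(K)$. The topological statement follows from the identical argument, using topological transversality and the locally flat normal form for $S$ (Freedman--Quinn) in place of their smooth counterparts, the cut-and-tube step being purely topological. I expect the only point genuinely needing care to be the normal form for $S$ inside the $2$-handle --- that a generator of $H_2(X_n(K))$ may be isotoped to meet $h^2_4(K,n)$ in exactly $1 + 2c(K)$ parallel cores --- but this is the same transversality fact already used to define complexity, so here it can simply be cited.
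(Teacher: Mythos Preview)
Your argument is correct and follows essentially the same route as the paper: cut a complexity-realizing sphere along the $2$-handle to get a planar piece in $\ball4$, then tube together oppositely oriented boundary parallels of $K$ by annuli to obtain a genus-$c$ surface bounding $K$. You are in fact a bit more careful than the paper in places --- you push the annuli into the interior of $\ball4$, compute $\chi$ explicitly, and note the orientation compatibility --- whereas the paper leaves the annuli in $\sphere3$ and asserts the genus by inspection.
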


        \begin{proof}
            \begin{figure}[b]
                \labellist
                \small\hair 2pt
                 \pinlabel {$X_0(4_1)$} [ ] at 170 149
                 \pinlabel {$\ball4$} [b] at 18 21
                 \pinlabel \textcolor{violet}{$S$} [ ] at 130 47
                 \pinlabel {$0
                 $} [ ] at 318 110
                 \pinlabel \textcolor{violet}{$A_1$} [ ] at 205 55
                \endlabellist
                \centering
                \includegraphics{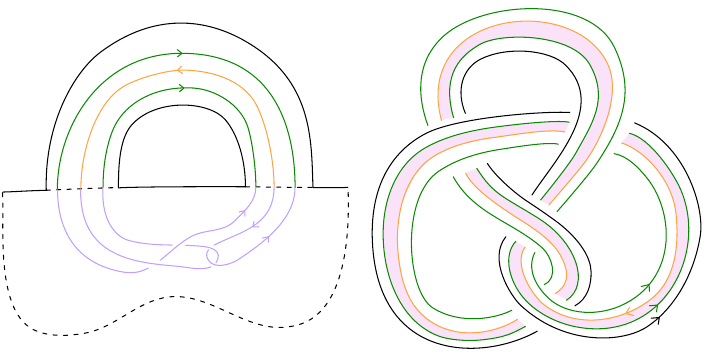}
                \caption{Left: a schematic of the perturbed sphere in the trace. Right: the link $K^0_1$ for $K = 4_1$. We can connect the inner components with the obvious oriented band (purple).}
                \label{fig:shakeslice}
            \end{figure}

            The proof is identical in both the smooth and topological setting so pick one without loss of generality. Let $K$ be as in the proposition and let $S$ be an embedded sphere realizing the minimal complexity. Assume to the contrary that $c(K) < g(K)$. We will use $S$ to construct a surface $\Sigma \subset \ball4$ with boundary $K$ and $g(\Sigma) = c(K)$ thereby creating a contradiction. By working in a chart which contains the 2-handle of $X_n(K)$ in its interior and applying general position to $S$ and the cocore of the 2-handle, we can assume that after a small perturbation $S$ intersects the 2-handle in $2c(K)+1$ parallel copies of the core. The homological condition on $S$ implies that exactly $c(K)+1$ of these parallel copies must be oriented coherently with the core and the remaining $c(K)$ copies must be oriented oppositely to the core so that the oriented intersection number $S \cdot \cocore(h^2_4(K,n)) = 1$. If we consider the boundary of the 4-ball to which the 2-handle is attached, we observe that the parallel copies of the core intersect the boundary of the 4-ball in parallel copies of the knot, twisted so that the linking number of each with the knot is $\pm n$ depending on the orientation of the copy (as unoriented parallels they all twist with the same handedness). Let this link be called $K^c_n \subset \sphere3$. Consider $S \cap \ball4$. We know $S \cap h^2_4(K,n)$ is a disjoint union of $2c(K)+1$ disks so it follows that $S \cap \ball4$ is a $(2c(K)+1)$-times punctured sphere whose boundary is $K^c_n$. We can pair up all but one of the components of $K^n_c$ into oppositely oriented pairs, with each pair bounding a disjoint annulus $A_i$ for $1 \le i \le c(K)$ in the complement of the link. This is clear from \autoref{fig:shakeslice}. Let $\Sigma := (S \cap \ball4) \cup (\bigcup_i A_i)$. It is immediately clear that $\boundary\Sigma = K$, that $\Sigma \subset \ball4$, and that $g^4(\Sigma) = c(K)$. 
        \end{proof}

    \subsection{A Sufficient Condition for High Complexity}

        We now give a general process for constructing $n$-shake-slice knots of arbitrary genus and complexity using dualizable patterns and a compatible invariant. 

        \begin{theorem}
            \label{thm:highcxtyconstruction}
            Let $\sI$ be an integer-valued, compatible invariant whose magnitude provides a lower bound on the 4-genus of a knot. Let $P$ be a dualizable pattern with $P(U)$ smoothly slice, and choose $n \in \Z \setminus \set{0}$, $c \in \N$. It follows that the knot $K$ defined by,
                $$ K := (P\dual_n)^c $$
            is smoothly $n$-shake-slice with 4-genus bounded by,
                $$ c\abs{\sI(P_{-n})} \le g_4(K) \le c \cdot g_3(P\dual \subset \DS21)$$
            and complexity, $c(K,n)$, bounded by,
                $$ c \abs{\sI(P_{-n})} \le c(K,n) \le \frac{1}{2}\left(\wrap(P)^c-1\right)$$
            The lower bounds hold in whichever category $\sI$ bounds the 4-genus, and the upper bounds are smooth. Here, $\wrap(P)$ denotes the unsigned intersection number $\abs{P\ \cap\ \disk2\times\set{pt}}$, and $g_3(P\dual \subset \DS21)$ denotes the minimal genus of a surface in $\DS21$ which spans $P\dual$ and the longitude of $\boundary \DS21$. 
        \end{theorem}

        We will prove \autoref{thm:main} by an application of this theorem with the following data: $P$ will be the wrapping number three pattern exhibited in \autoref{fig:PD1}, and $\sI := \half\sigma(-,t)$ will be the half the Levine-Tristram signature at a particular complex number $t$ (which will depend on $n$). The pattern $P$ was constructed so that $P(U)$ is smoothly slice, $\half\sigma(P_n,t) = \pm1$ for all non-zero $n$, and the 3-genus of $P\dual$ in its solid torus is one, which makes the genus bounds both sharp. The lower bound for complexity will follow from the genus and the upper bound will come from an explicitly constructed smoothly embedded sphere in the trace. 

        \begin{proof}
            Assume the hypotheses of the theorem and let $K$ be given: we begin by showing $K$ is smoothly $n$-shake-slice.  The knot $P^c(U)$ is smoothly slice since $P(U)$ is smoothly slice by hypothesis. Using the calculus of dualizable patterns we show that $K$ is an $n$-retracing of it:
            $$
                X_n(P^c) \diffeo X_n((P^c)_n^*) \diffeo X_n((P^*_n)^c)
            $$
            Clearly $X_n(P^c)$ has a sphere generating its second homology group, which is given by capping off a slice disk for the knot with the core of the 2-handle. Taking the image of this sphere under the diffeomorphism $d:X_n(P^c) \to X_n((P\dual_n)^c)$ shows that $K := (P\dual_n)^c$ is $n$-shake-slice. Next we compute $\sI(K)$ as follows:
            \begin{align*}
                \sI((P\dual_n)^c) = c\sI(P\dual_n) &= c\sI((P_{-n})\dual) = c\sI(P_{-n})\\
                \implies \abs{\sI(K)} &= c\abs{\sI(P_{-n})} \le g_4(K)
            \end{align*}
            and it follows from \autoref{prop:genusboundscomplexity} that $c\abs{\sI(P_{-n})} \le c(K,n)$.
            
            We now turn to the upper bounds, starting with the genus. $P\dual$ is a winding number one pattern, so $g_3(P\dual \subset V\dual) =: g$ is well defined. Let $F\sub{P\dual} \subset V\dual$ be a genus $g$ surface spanning the two-component link $P\dual \cup \lambda_{V\dual}$. For any companion knot $J$, with Seifert surface $F_J$ we can build a new Seifert surface for $P\dual_0(J)$ from $F_J$ and $F\sub{P\dual}$. The new surface $F\sub{P\dual(J)}$ will have genus,
            $$
                g(F\sub{P\dual(J)}) = g(F_J) + g
            $$
            and is constructed as follows: $F_J \cap \boundary\nbhd J = \lambda\sub{\nbhd J}$ which maps to $\lambda\sub{V\dual}$ when we construct $P\dual(J)$ by identifying $V\dual$ with $\nbhd J$. It follows that we can glue together the surfaces $F_J$ and $F\sub{P\dual}$ along $\lambda\sub{V\dual}$ to obtain the surface $F\sub{P\dual(J)}$ with the desired genus. We apply this construction inductively to obtain a genus $c \cdot g$ Seifert surface for ${P\dual}^c$. Next we observe that because $P\dual$ is winding number one, the knots $({P\dual}^c)_n$ and $({P\dual}_n)^c$ are the same. Therefore if we apply $n$ full twists to the entirety of ${P\dual}^c$ we obtain an immersed spanning surface which only has ribbon and circle self-intersections. These self-intersections can be resolved by pushing the surface into $\ball4$, so we have exhibited a genus $c\cdot g$ surface embedded in $\ball4$ which spans $(P\dual_n)^c$. This establishes the upper bound from the theorem.
            
            We can bound the complexity above by exhibiting a smoothly embedded sphere generating $H_2(X_n(K))$ which intersects the cocore of the 2-handle the appropriate number of times. Let $S \subset X_n(P^c)$ denote the smoothly embedded sphere formed by capping off a slice disk for $P^c$ with the core of the attached 2-handle. It suffices to show that $d(S) \subset X_n(K)$ intersects the cocore, $C$, of the 2-handle of $X_n(K)$ exactly $\wrap(P)^c$ times. There is an obvious equality $\abs{d(S) \cap C} = \abs{S \cap d\invs(C)}$ which allows us to work in either $X_n(K)$ or $X_n(P^c)$ respectively; we will work in the latter. $\boundary C$ appears in the obvious Kirby diagram of $X_n(K)$ as a small meridian of $K$, moreover $C$ is given by taking the obvious disk which this meridian bounds in the diagram and pushing it into the 4-ball so it is disjoint from the attaching region of the 2-handle. It is an easy exercise to show that this surface is isotopic to the cocore of the 2-handle of the trace. We can describe $d\invs(C)$ by keeping track of this circle and disk throughout the dualization process described earlier. The key observation is that the dualization process exchanges the meridian of the pattern with the meridian of the solid torus in which the pattern lives. The disk which spans the meridian to the pattern and intersects it in one point is taken to the $\disk2$ factor of the solid torus, which intersects the dual pattern in exactly its wrapping number by definition. It is clear from its construction that $S$ will intersect this $\disk2$ exactly where the pattern $P^c$ does and thus 
            $$
                \abs{S \cap d\invs(C)} = \wrap(P^c) = \wrap(P)^c
            $$
            which proves the claim.
        \end{proof}

    \subsection{A Family of Dualizable Patterns}

    The patterns $P$ and $P\dual$ shown in \autoref{fig:PD1} and \autoref{fig:PandPdual1} have a striking similarity which is built into their construction. They live in a broader class of dualizable patterns which we construct in this section.

    \begin{figure}[!ht]
        \labellist
        \small\hair 2pt
         \pinlabel {$P$} [ ] at 29 153
         \pinlabel {$P\dual$} [ ] at 236 153
         \pinlabel \red{$\mu_V$} [ ] at 116 23
         \pinlabel \red{$\mu\sub{V\dual}$} [ ] at 340 25
        \endlabellist
        \centering
        \includegraphics{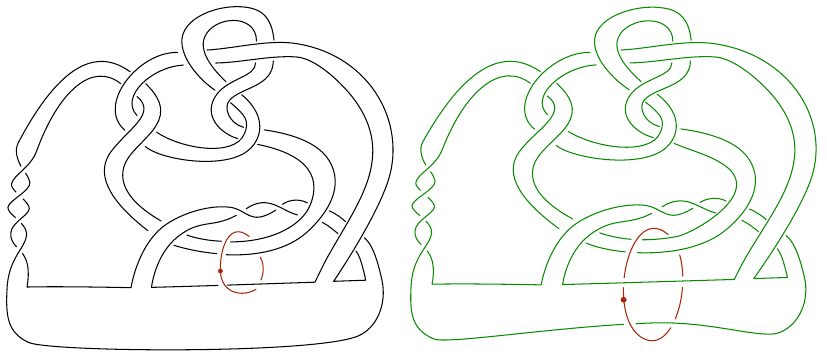} 
        \caption{The patterns $P$ and $P\dual$. Notice the dualization process reduces to changing which of the bottom strands is grabbed by the red curve.}
        \label{fig:PandPdual1}
    \end{figure}

    \begin{proposition}
    \label{prop:mypatterns}
        Let $K \subset \sphere3$ be a knot and $U$ an unknot embedded in the complement of $K$. Assume $K$ intersects a spanning disk for $U$ exactly twice, the linking number $K \cdot U = 0$, and let some $n \in \Z$ be given. Associated to this data is a dualizable pattern $P$ which has wrapping number three, $g_3(P \subset V) = 1)$, and bears the same diagrammatic relationship to its dual as the example in \autoref{fig:PandPdual1}. Moreover $g_3(P\dual \subset V\dual) = 1$ and if $K$ is slice in either category and $n = 0$ then $P(U)$, and hence $P\dual(U)$, is slice in that category.  
    \end{proposition}

    \begin{proof}

    \begin{figure}
        \labellist
        \small\hair 2pt
         \pinlabel {$K$} [ ] at 57 199
         \pinlabel \huge{$T$} [ ] at 84 252
         \pinlabel \huge{$T$} [ ] at 252 254
         \pinlabel \huge{$T$} [ ] at 84 118
         \pinlabel \huge{$T$} [ ] at 250 117
         \pinlabel \orange{$U$} [ ] at 125 181
         \pinlabel {$n$} [ ] at 177 213
         \pinlabel {$n$} [ ] at 8 82
         \pinlabel {$n$} [ ] at 178 83
         \pinlabel \red{$\mu_V$} [ ] at 67 30
         \pinlabel \red{$\mu\sub{V\dual}$} [ ] at 233 14
         \pinlabel \blue{$B$} [ ] at 111 77
         \pinlabel \green{$G$} [ ] at 278 77
         \pinlabel {$K\sub{2,2n}$} [ ] at 209 175
         \pinlabel \blue{$P$} [ ] at 13 133
         \pinlabel \green{$P\dual$} [ ] at 187 133
         \pinlabel {$(A)$} [ ] at 15 267
         \pinlabel {$(B)$} [ ] at 322 267
         \pinlabel {$(C)$} [ ] at 15 18
         \pinlabel {$(D)$} [ ] at 322 18
        \endlabellist
        \centering
        \includegraphics{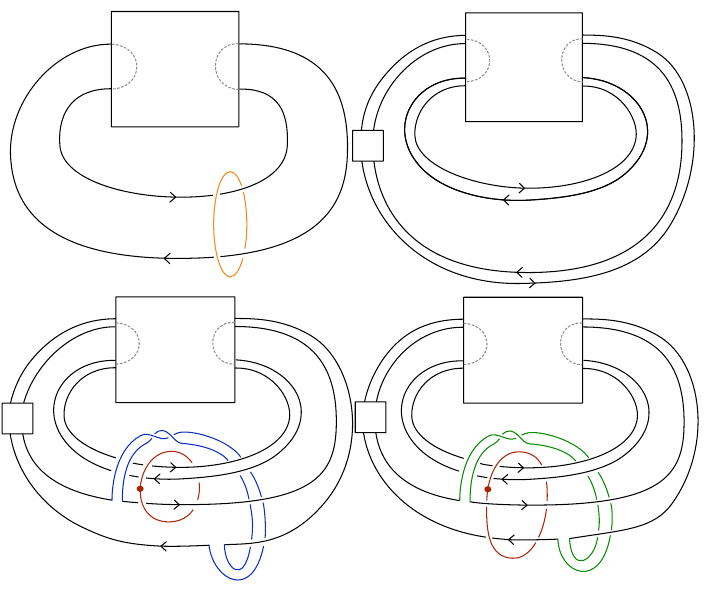}
        \caption{Constructing a dualizable pattern from the data specified in \autoref{prop:mypatterns}.}
        \label{fig:mypatterns}
    \end{figure}

    We start by explaining how to construct $P$ from this data. The unknot $U$ determines a decomposition of $K$ as the closure of a four-ended tangle $T$ as shown in \autoref{fig:mypatterns} (A). The fact that $K$ is a knot forces the tangle $T$ to have the connectivity shown by the two gray arcs inside the box, although of course the tangle may be far less trivial. We assume however that the tangle has been isotoped so that this closure gives a writhe-zero diagram of $K$. The first step in the construction is to pass to the $(2,2n)$-cable of $K$ with anti-coherent orientations, as shown in \autoref{fig:mypatterns} (B). The second step is to connect the two sides of the obvious annulus bounded by $K\sub{2,2n}$ by the band indicated in blue in (C) and green in (D) (it is the same band). This creates an obvious genus-one Seifert surface for the resulting knot. The final step is to remove a neighborhood of the red unknot either as in (C) to obtain the pattern $P$ or as in (D) to obtain its dual pattern $P\dual$. 

    We remark that the patterns in \autoref{fig:PandPdual1} correspond to the knot $6_1$ and the twisting parameter $n=0$. A close inspection of the \autoref{fig:PandPdual1} should make the position of the unknot clear. $6_1$ is well-known to be a smoothly slice knot, and the twisting parameter $n = 0$ gives the surface framing of the obvious copy of $6_1$ which lives on the natural genus-one Seifert surfaces for $P$ and $P\dual$. It follows that the knots $P(U)$ and $P\dual(U)$ are both smoothly slice. We can see explicit ribbon disks for each of them by performing a band move on the blue/green bands of \autoref{fig:mypatterns} which cuts each of the bands side-to-side. The bands then retract onto the black $(2,0)$-cable of $6_1$ which bounds a pair of disjoint parallel copies of the usual ribbon disk for $6_1$. 

    Returning to the proof of \autoref{prop:mypatterns}, we must show that the pattern constructed in \autoref{fig:mypatterns} (C) is dualizable with dual given by (D). The proof is given, in full generality by the computation in \autoref{fig:PD1} since none of the handle-slides or isotopies in that calculation interact with the tangle box. Thus the $(2,2)$-tangle coming from $6_1$ in that calculation could have been replaced with any tangle coming from any other knot, so long as it has the same connectivity (used in the middle-left sub-figure, when we pull the green component through the tangle). The final isotopy from the bottom-middle to bottom-right sub-figures also does not interact with the tangle except for one step in which a single left-handed crossing in the band passing though the tangle has to be pushed through the tangle and out the other side. Clearly this step can be achieved no matter what the tangle is, so long as the connectivity remains the same as the example in \autoref{fig:PD1}, and this is assumed in \autoref{prop:mypatterns}. 

    Finally we remark that the obvious Seifert surfaces for patterns and their duals constructed as in \autoref{prop:mypatterns} clearly each intersect the red curves representing $\mu_V$ and $\mu\sub{V\dual}$ in \autoref{fig:mypatterns} (C), (D) respectively exactly once. It follows that these surfaces, restricted to the complement of neighborhoods of the red curves, give embedded, genus-one, spanning surfaces for the two component links: $P \cup \lambda_V$ and $P\dual \cup \lambda\sub{V\dual}$ respectively. These surfaces, considered now in $\sphere3$, have obvious metabolizers if the knot $K$ used in the construction is smoothly slice and the twisting coefficient is zero. It follows that $P(U)$ and $P\dual(U)$ are smoothly slice under these hypotheses. 

    \end{proof}
    
\section{A Theorem about Classical Signatures}

    We digress for a moment in this section to prove \autoref{thm:2twists} about the classical signature. We will apply it in the next section to verify the conditions of \autoref{thm:highcxtyconstruction} for the pattern from \autoref{fig:PandPdual1} and $n$ odd. The case of $n$ even and non-zero will be handled by a different method. 

    The proof will proceed by using the method of Goeritz and Trotter, as described in \cite{gordon_signature_1978}, to calculate the classical signature of $K$ from a non-orientable spanning surface. We will show that there is a canonical way to convert a Seifert surface for $K$ into a non-orientable spanning surface for $K_{2}$. Moreover, the Goeritz form for $K_{2}$ can be computed from the Seifert form for $K$ and then diagonalized in full generality. The correction term can be easily computed and we will show that the correction term and change of signature offset each other, which proves the result.

    \subsection{Goeritz Forms and the Correction Term}

            Let $K$ be a knot and let $F$ be a spanning surface, possibly non-orientable, for $K$. Let $x_1, \ldots, x_n$ be an oriented basis for $H_1(F,\Z)$ and let $x^\tau_i$ denote the double cover of $x_i$ obtained by locally parametrizing the normal bundle as $F \times [-1,1]$ and lifting $x_i$ to $x_i \times \set{-1} \cup x_i \times \set{+1}$ in each chart. If $x_i$ is an orientation preserving loop in $F$ then $x_i^\tau$ is the disjoint union of two parallel copies of $x_i$, one on either side of $F$, and both oriented coherently with $x_i$. If $x_i$ is an orientation reversing loop then $x_i^\tau$ is a single circle which wraps twice around $x_i$ just as the boundary of a Mobius band wraps around its central circle, and which is oriented coherently with $x_i$. 
            
            We define the Goeritz matrix associated to the pair $(K,F)$ to be the form $\mathscr G_{ij} := lk(x_i,x_j^\tau)$. We observe that this form recovers the symmetrized Seifert form if $F$ is orientable since, 
            $$ 
                lk(x_i,x_j^\tau) = lk(x_i, x_j^+) + lk(x_i,x_j^-) = lk(x_i,x_j^+) + lk(x_j,x_i^+)
            $$
            For a proof that $\mathscr G$ is symmetric when $F$ is non-orientable see \cite{gordon_signature_1978}. 
            
            We define the correction term associated to $(K,F)$ as follows. Let $K_F$ denote a coherently oriented parallel copy of $K$ which is chosen to be disjoint from $F$, and observe that there is a unique such choice up to isotopies supported in a neighborhood of $K$. The correction term associated to $(K,F)$ is defined by $\eta := \half lk(K,K')$ which is always an integer. The central theorem of \cite{gordon_signature_1978} is:
            
            \begin{theorem}
            \label{thm:goeritz}
            Let $K$ be a knot in $\sphere3$ and let $F$ be a possibly non-orientable spanning surface for $K$ with Goeritz form $\mathscr G$ and correction term $\eta$. It follows that the classical knot signature of $K$ can be computed from $F$ by the formula: $$ \sigma(K) = \sign(\mathscr G) - \eta $$
            \end{theorem}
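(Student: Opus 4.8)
The plan is to prove this in the now-standard way via double branched covers, reducing the non-orientable case to a comparison with an orientable (Seifert surface) spanning surface. First I would push the interior of the spanning surface $F$ into $\ball4$, obtaining a properly embedded surface $\widehat F \subset \ball4$ with $\partial \widehat F = K \subset \sphere3$, and form $W_F$, the double cover of $\ball4$ branched along $\widehat F$. This is a smooth compact oriented $4$-manifold with $\partial W_F = \Sigma_2(K)$, the double cover of $\sphere3$ branched over $K$. The central homological fact --- which I would establish by a Mayer--Vietoris/transfer argument, or equivalently by reading a surgery description of $\Sigma_2(K)$ off of a checkerboard-type diagram subordinate to $F$ --- is that $H_1(W_F;\Q)=0$, $H_2(W_F;\Q)\cong H_1(F;\Q)$, and under this identification the intersection form of $W_F$ is exactly the Goeritz form $\mathscr G$: here one uses that the lift of each loop $x_i$ closes up against the branch surface, and that the intersection number of two such lifts upstairs equals the linking number $lk(x_i,x_j^\tau)$ downstairs. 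In particular $\sign(W_F)=\sign(\mathscr G)$. (That $\mathscr G$ is symmetric in the non-orientable case, so that this signature even makes sense, is the input quoted from \cite{gordon_signature_1978}.)

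Second, I would invoke the four-dimensional description of the knot signature: when $S$ is an orientable Seifert surface for $K$, the same construction gives $\sign(W_S)=\sigma(K)$ (and indeed then $\mathscr G = V_S+V_S^{T}$ and the correction term vanishes, recovering the classical definition). To pass from $S$ to a general $F$, glue $W_F$ to $-W_S$ along $\Sigma_2(K)$ to get a closed oriented $4$-manifold $X$, which is the double cover of $\sphere4$ branched along the closed surface $\Sigma := \widehat F \cup_K \overline{\widehat S}$. By Novikov additivity, $\sign(W_F)-\sigma(K)=\sign(W_F)-\sign(W_S)=\sign(X)$, and by the signature formula for double branched covers, $\sign(X)=2\sign(\sphere4)-\tfrac12[\Sigma]^2=-\tfrac12[\Sigma]^2$. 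The self-intersection $[\Sigma]^2$ is the relative normal Euler number $e(F)$ of $\widehat F$, the pushed-in Seifert surface contributing $0$. Finally I would match this with the correction term as defined in the statement: the parallel copy $K'$ of $K$ taken along $F$ and pushed off it satisfies $lk(K,K')=-e(F)$, this being precisely the self-linking of the framing induced by a spanning surface, so $\eta=\tfrac12 lk(K,K')=-\tfrac12 e(F)=\sign(X)$. Combining, $\sigma(K)=\sign(W_F)-\sign(X)=\sign(\mathscr G)-\eta$.

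The main obstacle is the homological identification of the intersection form of $W_F$ with $\mathscr G$ together with all the sign and normalization bookkeeping: ensuring one gets $+\mathscr G$ rather than $-\mathscr G$, that the factor of $\tfrac12$ and the sign in the branched-cover signature formula are as stated, and that these conventions agree with the one used to define $\eta$. The cleanest safeguard is to run the whole chain on the M\"obius-band spanning surface of the unknot, where $\mathscr G=(\pm 2)$, $\sigma=0$, and $\eta=\pm 1$, which pins down every sign. (An alternative, more elementary route, avoiding branched covers entirely, is Gordon--Litherland's original argument: verify that $\sign(\mathscr G)-\eta$ is unchanged under the two moves --- tubing and adding a half-twisted band --- that connect any two spanning surfaces of $K$, and that it equals $\sigma(K)$ on a Seifert surface; there the difficulty shifts to checking the move-invariance directly.)
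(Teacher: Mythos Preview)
The paper does not prove this theorem at all: it is stated as ``the central theorem of \cite{gordon_signature_1978}'' and used as a black box. There is therefore no proof in the paper to compare your proposal against.

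Your sketch is essentially the Gordon--Litherland argument itself, and is correct in outline. One small caution worth flagging: since $F$ may be non-orientable, the closed surface $\Sigma = \widehat F \cup_K \overline{\widehat S} \subset \sphere4$ may be non-orientable as well, so ``$[\Sigma]^2$'' is not literally a homological self-intersection number; it has to be interpreted as the normal Euler number of $\Sigma$ from the start (which you do, but the notation $[\Sigma]^2$ in the branched-cover signature formula is slightly misleading in this setting). Once that is understood, the chain $\sign(\mathscr G)=\sign(W_F)$, Novikov additivity, and the $G$-signature formula for double branched covers go through exactly as you say. Your suggested sanity check on the M\"obius band spanning the unknot (or, as the paper does immediately after the theorem, on the $(2,2n+1)$-torus knot) is the right way to nail down the signs.
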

            
            We give the following example to demonstrate the practical utility of this theorem. The $(2,2n+1)$-torus knot has an obvious spanning surface homeomorphic to the Mobius band. The Goeritz form associated to this surface is the one-by-one form $[2n+1]$ and the correction term is also $2n+1$, therefore, 
            $$ 
                \sigma(T_{2,2n+1}) = \sign([2n+1]) - (2n+1) = 1 - 2n - 1 = \boxed{-2n} 
            $$ 
            The amazing brevity of this calculation will not be lost on those who have tried to compute the signature of torus knots using orientable spanning surfaces.
            
            The most practical way to approach this calculation for a specific knot $K$ is to perform an isotopy of the pair $(K,F)$ in $\sphere3$ to make $F$ look like a small disk in the plane with several bands attached. The bands will twist and link with each other for a generic knot, but we can arrange them to lie in the same plane as the disk, except when one of them passes over another, or when one twists around its core arc. We label the bands $B_1, \ldots, B_{2n}, N_1, \ldots, N_m$ so that the $B_*$ are the orientable bands and the $N_*$ are the non-orientable bands. Each band yields a generator of $H_1(F,\Z)$, obtained by orienting the core arc of the band and connecting the ends of the arc with a small, boundary-parallel arc in the disk. The orientation of the core arc can be chosen arbitrarily for the orientable bands, but for the non-orientable bands it is convenient to choose the orientation to be coherent with the orientation of the edges of the band (which both point in the same direction because the band is non-orientable!). These generators form a basis and we can easily compute the Goeritz form in this basis in terms of the writhes, linking numbers, and twisting numbers of the bands.
            
            \begin{definition}
            \label{def:bands}
            Let $F \subset \sphere3$ be a connected, possibly non-orientable surface with a single boundary component, $K$. Assume that $F$ is embedded as a flat disk with bands, $B_1, \ldots, B_{2n}, N_1, \ldots, N_m$, attached to it and assume that all bands come with oriented core arcs as in the previous paragraph. We give the following definitions for any bands $X_i,X_j$ of $F$:
                \begin{enumerate}[itemsep=1mm]
                    \item The \emph{writhe} of $X_i$, denoted $W(X_i)$, is twice the signed count of self-crossings of the band $X_i$ where the sign is determined by the sign of the induced self-crossing of the core arc of $X_i$.
                    \item The \emph{twisting number} of $X_i$, denoted $T(X_i)$ is the signed count of half twists in the band relative to the plane. The sign is determined by the handedness of the twisting as one travels along the core arc (note: this is independent of the orientation of the core arc!).  
                    \item The \emph{linking number} of $X_i$ with $X_j$, denoted $Lk(X_i,X_j)$, is the signed count of crossings between the bands $X_i$ and $X_j$ with the signs determined by the orientations of their cores. \smallskip
                    
                    \noindent The linking number of $X_i$ with itself is defined by $Lk(X_i,X_i) := W(X_i) + T(X_i)$. Note that this is twice what the reader might expect if we compare it to the standard definition of the linking number between knots. 
                \end{enumerate}
            \end{definition}
            
            \noindent These definitions yield the following:
            
        \begin{proposition}
            \label{prop:goeritzbands}
            In the setting of the previous definition, the Goeritz form and correction term associated to the pair $(K,F)$ can be computed by the following formulas:
            $$ \mathscr G = [Lk(X_i,X_j)]_{i,j = 1}^{2n+m} \hspace{2cm} \eta = \sum_{i,j = 1}^n Lk(N_i,N_j) $$
            In particular, one can compute $\eta$ directly from $\mathscr G$ by summing all the entries in $\mathscr G$ which correspond to a pair of non-orientable bands. 
        \end{proposition}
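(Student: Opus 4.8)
The plan is to compute both $\mathscr G$ and $\eta$ directly from a diagram in which $F$ is presented as a flat round disk $D$ in the plane together with the bands $B_1,\dots,B_{2n},N_1,\dots,N_m$, each band lying in the plane of $D$ except where it half-twists about its core arc or passes over or under another band (or itself). After such an isotopy of the pair $(K,F)$, every linking number appearing in the definitions of $\mathscr G$ and of $\eta$ is one half of a signed count of crossings in this planar picture. The disk can be taken small enough that no crossings occur inside $D$, and the short boundary-parallel arcs of the generators $x_i$ lying in $D$ can be arranged to contribute no crossings either.

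First I would verify $\mathscr G=[Lk(X_i,X_j)]$. Fix $i,j$ and examine $lk(x_i,x_j^\tau)$. When $X_j$ is orientable, $x_j^\tau=x_j^+\sqcup x_j^-$ is the union of the two normal pushoffs of $x_j$; when $X_j$ is non-orientable, $x_j^\tau$ is the single curve double-covering $x_j$, i.e.\ the boundary of the M\"obius neighbourhood $x_j\times[-1,1]$ inside $F\times[-1,1]$. In either case the diagram of $x_j^\tau$ runs alongside the band $X_j$, ``doubled,'' so the crossings of $x_i$ with $x_j^\tau$ are exactly the crossings of band $X_i$ with band $X_j$, counted with the multiplicity ($=2$) forced by the pushoff. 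For $i\neq j$ this gives $lk(x_i,x_j^\tau)=Lk(X_i,X_j)$ on the nose: the sign conventions on the core arcs in \autoref{def:bands} are chosen to match, and the factor of two built into $Lk$ is exactly the one produced by the doubled pushoff. For $i=j$ the surviving crossings split into the self-crossings of the band, accounting for $W(X_i)$, and the crossings produced by its half-twists relative to the plane, accounting for $T(X_i)$; in the non-orientable case the ``two strands'' of $x_i^\tau$ are joined into a single curve, which is why each half-twist still contributes once and $T(X_i)$ rather than $2T(X_i)$ appears. Hence $\mathscr G_{ii}=W(X_i)+T(X_i)=Lk(X_i,X_i)$.

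Next I would compute $\eta=\half\,lk(K,K')$. The structural fact to isolate is about $K=\partial F$: traversing $K$, the two edges of an orientable band are run in opposite directions along that band, while the two edges of a non-orientable band are run in the same direction --- this is precisely the two-sided/one-sided dichotomy that makes the band orientable or not. Take $K'$ to be the parallel copy of $K$ disjoint from $F$ described before \autoref{thm:goeritz} (well defined up to isotopy in $\nbhd{K}$) and expand $lk(K,K')=\half\sum_c\sgn(c)$ over the crossings $c$ of a diagram of $K\cup K'$. No crossings occur in $D$. Every crossing involving an orientable band $B_i$ --- a self-crossing or half-twist of $B_i$, or a crossing of $B_i$ with another band --- is matched by a crossing of the opposite sign coming from the oppositely-directed second edge of $B_i$, so all such contributions cancel (care is needed here to measure crossings against the correct $0$-framing of $K$, which is where this cancellation genuinely lives). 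What remains are the crossings among the non-orientable bands, whose edges run parallel; summing these with the doubled conventions of \autoref{def:bands} yields $lk(K,K')=2\sum Lk(N_i,N_j)$, the sum over all ordered pairs of non-orientable bands, so that $\eta=\sum Lk(N_i,N_j)$. Comparing with the previous paragraph, this is exactly the sum of the entries of $\mathscr G$ indexed by two non-orientable bands, which gives the ``in particular'' clause; the torus-knot computation following \autoref{thm:goeritz} serves as a consistency check on both sign and normalisation.

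The work is book-keeping rather than conceptual, and the main obstacle is doing it cleanly: pinning down explicit diagrammatic models for $x_j^\tau$ and for $K'$ so the signed crossing counts are unambiguous, and then carrying the factors of two and the signs through consistently with the deliberately doubled conventions of \autoref{def:bands}. The non-orientable case of $x_j^\tau$, where the two parallel pushoffs degenerate into one double-wrapping curve, is the one place where a careless count is off by a factor of two, and the vanishing of the orientable-band contribution to $\eta$ must be checked against the $0$-framing of $K$ rather than read off locally from a twisted band. Each of these is a small local verification carried out by drawing the model of a single band attached to $D$ and inspecting the orientations of its two edges.
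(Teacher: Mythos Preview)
Your outline matches the paper's proof closely: both compute $\mathscr G_{ij}=lk(x_i,x_j^\tau)$ and $\eta=\tfrac12\,lk(K,K')$ by signed crossing counts in the disk-with-bands diagram, isolating band-over-band crossings, self-crossings, and half-twists, and your direct cancellation argument for $\eta$ (the two edges of an orientable band run in opposite directions) is a mild variant of the paper's surgery trick.

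One point to sharpen: the closing arcs of the $x_i$ inside $D$ cannot in general be arranged to miss one another --- when the feet of two bands are interleaved on $\partial D$ their boundary-parallel connecting arcs must intersect --- so ``can be arranged to contribute no crossings'' is not literally true. The paper handles this by checking that at each such intersection the two crossings of $x_i$ with the strands of $x_j^\tau$ lying above and below the plane of $D$ carry opposite signs and cancel (its \autoref{fig:goeritzproof}(B)); you should replace the arrangement claim with this cancellation.
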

            
        \begin{proof}
            
        \begin{figure}[!ht]
            \labellist
            \small\hair 2pt
             \pinlabel \orange{$x_i$} [ ] at 113 23
             \pinlabel \orange{$x_j$} [ ] at 141 160
             \pinlabel {$K$} [ ] at 24 62
             \pinlabel \green{$x_j^\tau$} [ ] at 119 111
             \pinlabel \green{$x_i^\tau$} [ ] at 61 149
             \pinlabel {$(A)$} [ ] at 67 20
             \pinlabel {$(B)$} [ ] at 234 20
             \pinlabel {$(C)$} [ ] at 383 20
             \pinlabel \green{$x_i^\tau$} [ ] at 360 111
             \pinlabel \orange{$x_i$} [ ] at 324 73
             \pinlabel {$F$} [ ] at 273 84
             \pinlabel \orange{$x_i$} [ ] at 183 124
             \pinlabel \green{$x_j^\tau$} [ ] at 168 72
             \pinlabel \orange{$x_j$} [ ] at 268 105
            \endlabellist
            \centering
            \includegraphics{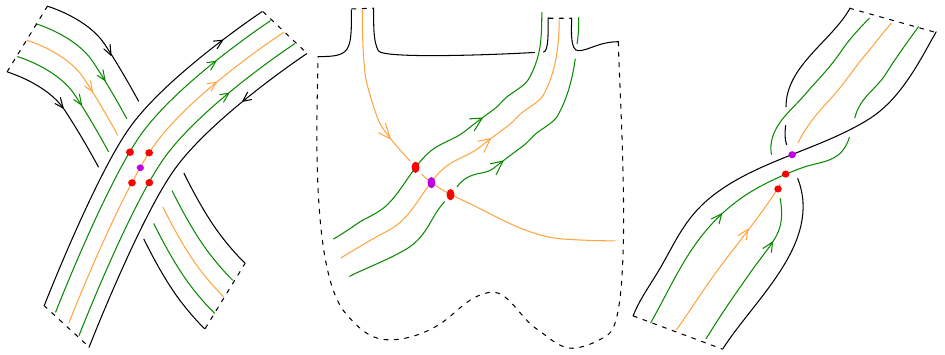}
            \caption{(A) bands crossing each other, (B) arcs and their push-offs meeting in the disk, (C) a band with a half twist. The cores of the bands are orange, the push-offs of the cores in green. Red dots mark green-orange crossings and purple dots mark orange-orange or black-black crossings respectively.}
            \label{fig:goeritzproof}
        \end{figure}    
            
        We begin by verifying the formula for the Goeritz form in the basis given by closing the oriented cores of the bands with small arcs in the disk. $\mathscr G_{ij} := lk(x_i,x_j^\tau)$ so we must count the crossings between these two links. There are three different families of crossings, which are shown in \autoref{fig:goeritzproof}. We check that the contributions which each of these local pictures make to $lk(x_i,x_j^\tau)$ and $Lk(X_i,X_i)$ are equal. In \autoref{fig:goeritzproof}, the generators corresponding to the cores of the bands are shown in orange and their push-offs in green. The crossings which contribute to $lk(x_i,x_j^\tau)$ are marked in red and those which contribute to $Lk(X_i,X_i)$ are in purple. 
            
        \begin{enumerate}[label=(\Alph*)]
            \item Two of the four red crossings contribute $\pm 1/2$ each to $lk(x_i,x_j^\tau)$, changing the sum by $\pm1$. The purple crossing also contributes $\pm 1$ to $Lk(X_i,X_j)$.
            \item The red crossings come with opposite sign so they contribute nothing to $lk(x_i,x_j^\tau)$. The purple crossing does not correspond to a band-over-band crossing so it contributes nothing to $Lk(X_i,X_j)$.
            \item The two red dots contribute $\pm 1/2$ each to $lk(x_i,x_i^\tau)$ and the purple crossing contributes $\pm 1$ to $W(X_i)$. 
        \end{enumerate}

        Next we analyze the correction term. Since $K'$ is disjoint from $F$, it follows that $K'$ is isotopic to the Seifert framed parallel $K''$ as $K'$ runs around the base disk and the orientable bands. This means the signed count of the crossings between $K$ and $K'$ will agree with the count between $K$ and $K''$ near the disk and orientable bands. This is zero since we can surger out the non-orientable bands from $(K,F)$ which makes $F$ an orientable spanning surface and $K'$ a disjoint parallel. It follows that $lk(K,K')$ can be computed entirely from counting crossings between $K'$ and the boundaries of the non-orientable bands. Where two non-orientable bands cross, the count changes by $\pm 4$ so the linking number changes by $\pm 2$ and the correction term by $\pm 1$. This is exactly the contribution the crossing makes to the linking number between the two bands. Wherever the band has a half twist, each of the strands of $K'$ must cross over the band which adds four crossings of the same sign. The count changes by $\pm 4$ so the correction term changes by $\pm 1$, see \autoref{fig:goeritzproof} (C). This is exactly the contribution which the half twist makes to the band's self-linking number. It follows that the correction term is exactly the sum of the entries of $\mathscr G$ corresponding to pairs of non-orientable bands. 

        \end{proof}

    \subsection{Adding Pairs of Twists}

     We will need the following lemma:

        \begin{lemma}
        \label{lem:niceform}
        Let $K \subset Y^3$ be a null-homologous knot in an oriented 3-manifold. Let $D \subset Y^3$ be an embedded disk with oriented boundary $U$ such that $U \cdot K = \pm 1$ and therefore,
        $$
            \abs{K \cap D} = 1 + 2c
        $$
        for some $c \in \Zpos$. It follows that there exists an orientable spanning surface $F \subset Y^3$ for $K$ such that $F \cap D$ consists of $c$ arcs on $D$ connecting oppositely oriented intersection points between $K$ and $D$ and one arc connecting the remaining positive intersection point to a point on $U = \boundary D$. Thus the intersection of $F$ with a neighborhood of $D$ looks like the upper left of \autoref{fig:spanningsurface}.
        \end{lemma}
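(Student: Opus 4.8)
\emph{Proof proposal.} The plan is to start from an arbitrary oriented spanning surface for $K$ and simplify its intersection with $D$ by a sequence of standard innermost-disk surgeries together with one cancellation move along $U$, using an orientation count on $K\cap D$ to pin down the resulting intersection pattern.

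Since $K$ is null-homologous it bounds a compact oriented embedded surface $F_0\subset Y^3$. Put $F_0$ in general position with respect to $D$ and $U=\boundary D$, so that $F_0\cap D$ is a compact $1$-manifold with boundary $(K\cap D)\sqcup(F_0\cap U)$. Fixing orientations of $Y$, $F_0$ and $D$ induces an orientation on $F_0\cap D$; comparing it with the local intersection data, one sees that each arc component points toward exactly one of its two endpoints, and that the local sign of an endpoint on $K\cap D$ (as an intersection point of $K$ with $D$), resp.\ on $F_0\cap U$ (as an intersection point of $F_0$ with $U$), is determined by whether the arc points toward it or away from it. Consequently an arc with both endpoints on $K\cap D$ joins an oppositely oriented pair, an arc with both endpoints on $U$ joins an oppositely signed pair of points of $F_0\cap U$, and, since the signed count of $K\cap D$ equals $U\cdot K=+1$, the signed count of $F_0\cap U$ is $\pm1$.

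Now I would run the reductions. First, remove the closed components of $F\cap D$ one at a time by the usual innermost-disk surgery of $F$ along a subdisk of $D$, keeping the component containing $K$ (and discarding any closed component produced); this preserves orientability and $\boundary F=K$ and leaves $F\cap D$ a disjoint union of arcs. Next, as long as $|F\cap U|>1$, pick two points of $F\cap U$ that are consecutive in the cyclic order on $U$ and have opposite sign---such a pair exists because $|F\cap U|$ is odd with signed count $\pm1$, so the signs are not all equal---and drag one of them along the arc of $U$ between them (which contains no other point of $F\cap U$) until it cancels the other. This is an ambient isotopy of $F$ rel $K$, supported in a small ball about that arc of $U$ and hence disjoint from the rest of $F$ and of $D$; its effect on $F\cap D$ is to splice the two arcs meeting those two points into a single arc---or, if both points lay on one and the same arc of $F\cap D$, to pinch it off into a closed component, which is then deleted by another innermost-disk surgery. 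In every case $|F\cap U|$ strictly decreases, so the process halts with $|F\cap U|=1$: then $F\cap D$ consists of exactly one arc with an endpoint on $U$, no arc with both endpoints on $U$, and some number $a$ of arcs with both endpoints on $K\cap D$. From $2a+1=|K\cap D|=1+2c$ we get $a=c$; by the orientation observation each of these $c$ arcs joins an oppositely oriented pair, so they pair $c$ of the positive points of $K\cap D$ with all $c$ negative ones, and the remaining (positive) point is joined to $U$ by the last arc. That is exactly the configuration claimed, i.e.\ the upper left of \autoref{fig:spanningsurface}.

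The innermost-disk surgeries and the cancellation isotopy are routine; the part needing genuine care is the orientation bookkeeping of the second paragraph, where one must fix conventions precisely enough to be sure that (i) arcs with both endpoints on $K\cap D$ really do connect oppositely oriented intersection points of $K$ with $D$, and (ii) whenever $|F\cap U|>1$ a cancelling pair on $U$ is available---equivalently, that one can never be stranded with several arcs whose $U$-endpoints all carry the same sign. A secondary point is the standard verification that none of the surgeries or isotopies creates intersections of $F$ with $D$ beyond those accounted for above.
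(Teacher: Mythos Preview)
Your approach is correct in outline and genuinely different from the paper's. The paper builds the desired surface from scratch: it first resolves the strands of $K$ passing through $D$ along chosen arcs on $D$ (and performs a finger move on the leftover strand) to obtain a link $L$ disjoint from a neighborhood of $D$, takes a Seifert surface for $L$ in $Y\setminus\nu(D)$, reattaches the obvious ribbons to recover $K$, and then resolves the resulting ribbon self-intersections. You instead start from an arbitrary Seifert surface and simplify its intersection with $D$ by innermost-circle surgeries and by cancelling points of $F\cap U$. Your route is the familiar ``put in general position and simplify'' paradigm and is arguably more direct; the paper's route is more explicitly constructive and makes the local picture near $D$ visible from the outset.

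One point needs correction. Your cancellation of a consecutive opposite-sign pair $p,q\in F\cap U$ cannot be realized as an ambient isotopy of $F$ supported in a small ball $B$ about the arc $\alpha\subset U$. In such a ball one has $F\cap B=D_p\sqcup D_q$, two properly embedded disks whose boundaries are meridians of $\alpha$ on $\partial B$; any disk in $B$ with such a boundary must meet $\alpha$, so no isotopy rel $\partial B$ removes the intersections. What does work is an oriented surgery on $F$: replace $D_p\cup D_q$ by the annulus on the lateral boundary of the solid cylinder $B$. Because $p$ and $q$ have opposite sign this annulus glues in compatibly with the orientation of $F$, so the result is again an oriented spanning surface for $K$ (possibly of different genus, and possibly with a closed component which you discard as before). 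Its effect on $F\cap D$ is exactly the splicing you describe, and $|F\cap U|$ drops by two. With ``isotopy'' replaced by ``oriented surgery'' throughout that step, your argument goes through unchanged.
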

        
        \begin{figure}[ht]
            \labellist
            \small\hair 2pt
             \pinlabel {$F_{1+2c}$} [ ] at 78 30
             \pinlabel {$F$} [ ] at 306 30
            \endlabellist
            \centering
            \includegraphics{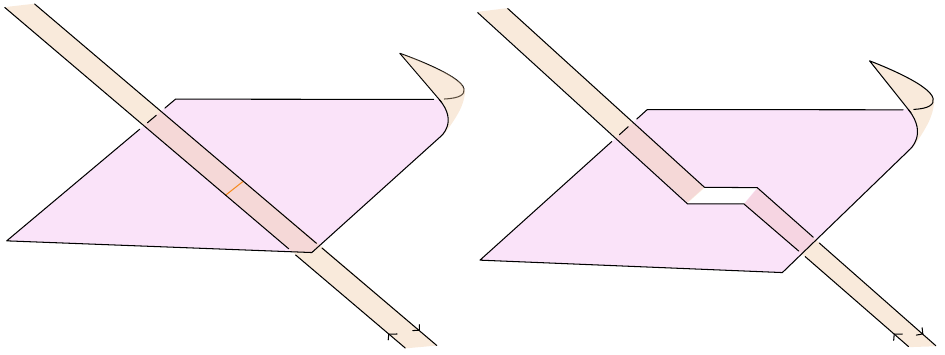}  
            \caption{Resolving a ribbon intersection of a Seifert Surface.}
            \label{fig:ribbonresolution}
        \end{figure}
        
        \begin{proof}
        We can pair up the strands of $K$ which intersect $D$ into oppositely oriented pairs, with one strand left over. Each oppositely oriented pair looks locally like the boundary of a band passing through $D$ and we can ensure that each resulting arc of intersection is disjoint. We can modify $K$ by surgering each of these bands along its arc of intersection with $D$, which transforms the $K$ into a $(c+1)$-component oriented link $L_1$. There is one point of intersection left between $L_1$ and $D$. We can remove it by doing a finger move along an arc on $D$ which avoids the previous arcs of intersection between $D$ and the bands. This gives us an oriented link $L_0 \subset Y^3 \setminus \nbhd D$. Clearly $L_0$ is also null-homologous in $Y^3\setminus \nbhd D$ and it follows that $L_0$ has a connected, oriented spanning surface $F_0 \subset Y^3 \setminus \nbhd{D}$. We can recover $K$ from $L_0$ and simultaneously convert $F_0$ into an immersed spanning surface for $K$ by attaching to $F_0$ the obvious ribbons which undo the surgeries and the finger move we used to produce $L_0$. Let the new, immersed, spanning surface for $K$ which we obtain be called $F\sub{1+2c}$. It follows that the only self-intersections of $F\sub{1+2c}$ with itself are ribbon intersections. These can be resolved according to the local model shown in \autoref{fig:ribbonresolution}. Let $F$ be the result of resolving the self-intersections of $F_{1+2c}$ in this manner. It now follows that $F$ satisfies the conclusion of the lemma.
        \end{proof}
        
        \begin{figure}[ht]
            \labellist
            \small\hair 2pt
             \pinlabel {$U$} [ ] at 91 158
             \pinlabel \huge{$+2$} [ ] at 217 95
             \pinlabel \huge{$+2$} [ ] at 375 102
             \pinlabel {$N$} [ ] at 434 157
             \pinlabel {$F_2$} [ ] at 327 33
             \pinlabel {$F$} [ ] at 28 43
             \pinlabel {$K$} [ ] at 29 78
             \pinlabel {$K_2$} [ ] at 168 64
             \pinlabel {$K_2$} [ ] at 314 68
             \pinlabel {$B$} [ ] at 15 107
             \pinlabel {$B'$} [ ] at 15 156
             \pinlabel {$B$} [ ] at 311 94
             \pinlabel {$B'$} [ ] at 309 141
            \endlabellist
            \centering
            \includegraphics{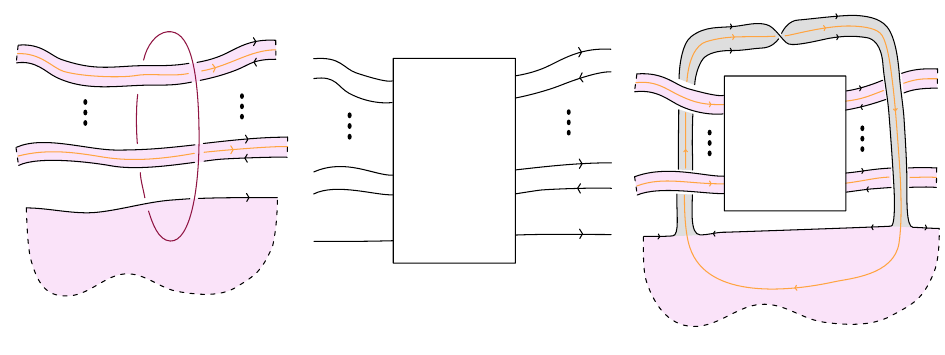}
            \caption{Adding two right-handed twists to $K$ and modifying its spanning surface.}
            \label{fig:spanningsurface}
        \end{figure}

        We can now give the proof of \autoref{thm:2twists}:

        \begin{proof}[Proof of \autoref{thm:2twists}]
            
        Consider a local neighborhood of the spanning disk for $U$ and its intersection with $K$. We see $U$ as a standard unknot and $2n+1$ strands of $K$ passing through it, $n+1$ passing positively and $n$ passing negatively. We can pair up each positive strand with a negative strand and we will have one positive left over. We can assume without loss of generality that there is a Seifert surface $F$ for $K$ such that each oppositely oriented pair of strands passing through $U$ is the boundary of an orientable band of $F$ by the lemma. We can further assume that the intersection between the disk and $F$ is an arc connecting the left-over strand to $U$ and several disjoint intervals corresponding to the bands. Now consider $K_{2}$. We can see from \autoref{fig:spanningsurface} that $K_{2}$ is equal to $K$ outside a neighborhood of the spanning disk for $U$, and moreover there is a natural way to modify $F$ to obtain a non-orientable spanning surface for $K_{2}$ which we will call $F_{2}$. Notice that the orientable bands of $F_{2}$ correspond to the bands of $F$ in an obvious way, while exactly one non-orientable band has been added.
        
        We will compare the Goeritz form $\sG$ of $F$ to the form $\sG_{2}$ of $F_{2}$. Let's order the bands of $F$ by putting the ones which don't pass through the spanning disk for $U$ first then those that do. If $B, B'$ are a bands with self-linking numbers $b, b'$ which pass through the disk $l, l'$ times, counted with sign, then the self-linking of the corresponding bands of $F_{2}$ will be $b+4l^2, b+{4l'}^2$ respectively. If $Lk(B,B') = p$ then the linking number of the corresponding bands in $F_{2}$ will be $p + 4ll'$. From the diagram, we can see that the self-linking of the new, non-orientable band, $N$, is $+1$ and of course its linking numbers with $B, B'$ are $2l,2l'$. Thus the intersection of the three rows and columns corresponding to $B, B', N$ form the sub-matrix given on the left below. We can diagonalize this sub-matrix using the pairs of row and column operations indicated by the arrows below. 
        $$
        \begin{pmatrix}
        b + 4l^2 & p + 4ll' & 2l \\
        p + 4l'l & b' + {4l'}^2 & 2l' \\
        2l & 2l' & 1
        \end{pmatrix}
        \buildrel{((2) - 2l' \cdot (3)) \mapsto (2)}\over\leadsto
        \begin{pmatrix}
        b + 4l^2 & p & 2l \\
        p & b' & 0 \\
        2l & 0 & +1
        \end{pmatrix}
        \buildrel{((1) - 2l \cdot (3)) \mapsto (1)}\over\leadsto
        \begin{pmatrix}
        b & p & 0 \\
        p & b' & 0 \\
        0 & 0 & 1
        \end{pmatrix}
        $$
        Since $B,B'$ could have been any two bands from $F$ we see that this sequence of operations applied to each disjoint pair of corresponding bands in $F_{2}$ gives a matrix congruence from $\sG_{2}$ to $\sG \oplus [+1]$. It follows that $\sign(\sG_{2}) = \sign(\sG)+1$. Since there is only one non-orientable band on $F_{2}$, we see that the correction term is its self-intersection which gives $\eta = +1$. Thus, 
        $$ 
            \sigma(K_{2}) = \sign(\sG_{2}) - \eta = \sign(\sG) + 1 - 1 = \sign(\sG) = \sigma(K)
        $$
        The result now follows from the obvious inductive argument on $n$. 
        
        \end{proof}

\section{The Examples; Proof of the Main Theorem}

    \subsection{The Diagrams}

    We now begin the process of verifying the hypotheses of \autoref{thm:highcxtyconstruction} for the two patterns shown in \autoref{fig:PandPdual}, with the Levine-Tristram signature function playing the role of the compatible invariant.  We remark that $\sigma(-)$ denotes the classical knot signature, which is the same as the Levine-Tristram signature $\sigma(-,t = -1)$ which is always valued in $2\Z$. 

    \begin{proposition}
    \label{prop:oddcase}
    Let $P$ be the pattern described in \autoref{fig:PandPdual}. For each $k \in \Z$, 
        $$ \half\sigma(P_{2k}) = 0, \qquad \half\sigma(P\sub{1+2k}) = 1 $$
    It follows that the knots $K\sub{n,c}$ constructed as in \autoref{thm:highcxtyconstruction} satisfy the conclusions of \autoref{thm:main} for all odd integers $n$. 
    \end{proposition}
    \begin{figure}[!ht]
        \labellist
        \small\hair 2pt
         \pinlabel {$P$} [ ] at 29 153
         \pinlabel {$P\dual$} [ ] at 236 153
         \pinlabel \red{$\mu_V$} [ ] at 116 23
         \pinlabel \red{$\mu\sub{V\dual}$} [ ] at 340 25
        \endlabellist
        \centering
        \includegraphics{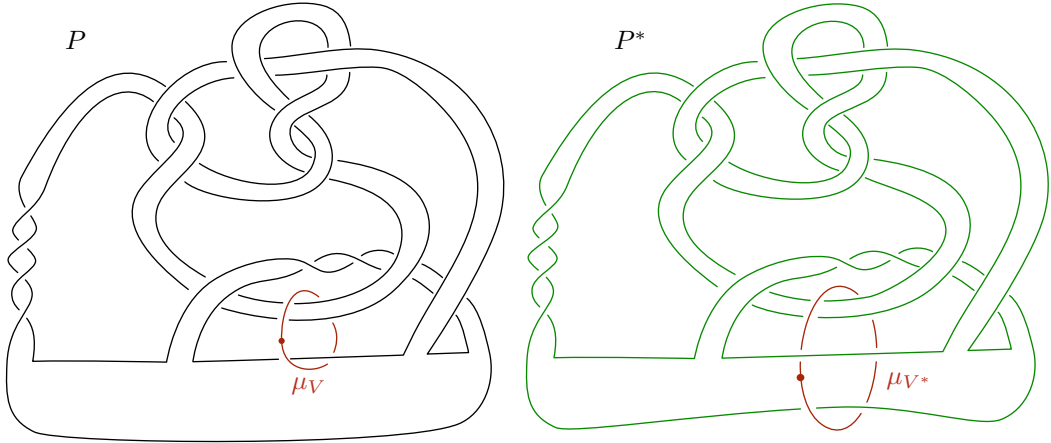} 
        \caption{The patterns $P$ and $P\dual$.}
        \label{fig:PandPdual}
    \end{figure}    
    \begin{proof}
    We apply \autoref{thm:2twists} to obtain: 
    $$
        \sigma(P_{2k}) = \sigma(P) = 0 \hspace{1cm} \sigma(P_{1+2k}) = \sigma(P_1) 
    $$
    and so it will suffice to compute $\half\sigma(P_1)$. We do this explicitly by drawing a Seifert surface for $P_1$ then writing down the associated Seifert form $A_1$ and computing the signature:
    $$
        A_1 = \tiny{\begin{pmatrix}
          2 & 0 & -1 & 1 \\
         -1 & 1 &  0 & 0 \\
         -1 & 0 &  0 & 0 \\
          0 & 0 &  1 & 0 \\
        \end{pmatrix}},
        \hspace{1cm}
        \half\sigma(P_1) = \half\sigma(A_1 + A^T_1) = \half\sigma
            \tiny{\begin{pmatrix}
              4 & -1 & -2 & 1 \\
             -1 &  2 &  0 & 0 \\
             -2 &  0 &  0 & 1 \\
              1 &  0 &  1 & 0 \\
            \end{pmatrix}}
        = 1
    $$
    We can see from inspecting the diagram that $\wrap(P\dual)=3$ and $g_3(P\dual\subset V\dual) = 1$. It follows from these values and the computation above that the bounds obtained from \autoref{thm:highcxtyconstruction} agree with the conclusions of \autoref{thm:main} for the associated knots $K\sub{n,c}$ assuming $n$ is odd.  
    \end{proof}

    This gives smoothly $n$-shake-slice knots with any smooth or topological 4-genus and arbitrarily high complexity for any odd $n$. We need the greater generality of the Levine-Tristram signatures to tackle even $n$, since our computation above shows $\sigma(P\sub{-2k}) = 0$. The main thrust of our analysis will be the following: we will construct a family of Seifert surfaces $F_n$ for $P_n$ which yield Seifert forms $A_n$. We will be able to give a general formula for the matrix $A_n$ and deduce a general formula for the symmetrized Alexander polynomial $\Delta_{n}(t)$ of the knot $P_n$. The Levine-Tristram signature is known to change exactly at those complex numbers on the unit circle where the symmetrized Alexander polynomial passes through zero. We will show that for any $n \in \Z$, $\Delta_n(t)$ attains both positive and negative values on $\sphere1 \subset \C$. It then follows from the intermediate value theorem that there is some $t_n \in \sphere1$ which depends on $n$ where $\half\sigma(P_n,t_n) \ne 0$. Since $P_n$ bounds an obvious immersed, ribbon, genus-one surface in $\sphere3$, it follows that $\half\sigma(P_n,t_n) = \pm 1$ and the proof of \autoref{thm:main} follows exactly as the proof of \autoref{prop:oddcase} does above. 

    \subsection{Adding Twists around Algebraically One Strand}\hfill\newline
        
        We will now explain how to create a family of Seifert surfaces for $P_n$ starting from a given Seifert surface for $P$. We use the following extension of \autoref{lem:niceform}. 

        \begin{figure}[ht]
            \labellist
            \small\hair 2pt
             \pinlabel {$\boundary F = K$} [l] at 15 130
             \pinlabel {$\boundary F' = K$} [l] at 150 130
             \pinlabel {$(-)$} [ ] at 194 201
             \pinlabel {$(+)$} [ ] at 221 201
             \pinlabel {$(+)$} [ ] at 67 91
             \pinlabel {$(-)$} [ ] at 99 91
             \pinlabel {$n$} [ ] at 146 100
             \pinlabel {$(+)$} [ ] at 193 91
             \pinlabel {$(-)$} [ ] at 226 91
             \pinlabel {$D$} [ ] at 43 102
             \pinlabel \purple{$U$} [ ] at 25 102
             \pinlabel {$\boundary F\sub{\pm n} = K\sub{\pm n}$} [l] at 23 26
            \endlabellist
            \centering
            \includegraphics{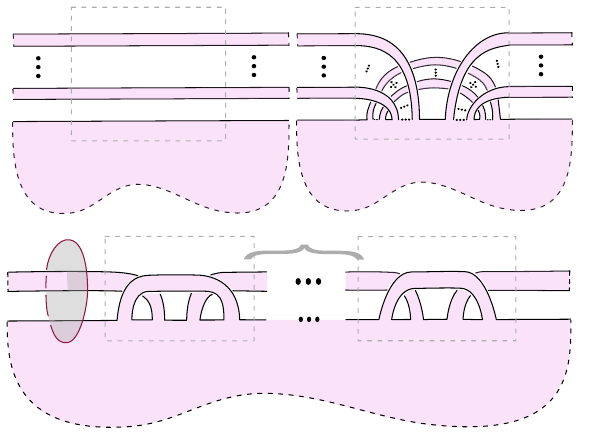}
            \caption{Modifying a Seifert surface to add twists. Switching all the band-over-band crossings on the $(\pm)$ side of a gray box adds a $\pm1$ full twist to $K$. Concatenation creates a Seifert surface $F\sub{\pm n}$ for $K_{\pm n}$.}
            \label{fig:Seifertmodification}
        \end{figure}
        
        \begin{lemma}
        \label{lem:magicarches}
        Let $K \cup U \subset \sphere3$ be as in \autoref{lem:niceform}, and let $K_n$ be as in \autoref{thm:2twists}. Let $F$ be the Seifert surface for $K$ obtained from \autoref{lem:niceform}. We may convert the surface $F$ into a new Seifert surface $F_n$ for $K_n$ by making a finite number of local modifications to $F$ as indicated in \autoref{fig:Seifertmodification}.
        \end{lemma}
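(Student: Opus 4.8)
The proof is almost entirely a matter of carefully describing a local picture, so I would structure it around \autoref{fig:Seifertmodification} and verify three things: that the modified object is an embedded orientable surface, that its boundary is $K_n$, and that the modification is achievable by a finite sequence of the indicated local moves. Since \autoref{lem:niceform} already puts $F$ into the normal form where $F \cap \nbhd{D}$ consists of $c$ arcs pairing up oppositely-oriented strands of $K$ plus a single arc running from the leftover positive strand out to $U = \boundary D$, the only region of $F$ that needs to change is a neighborhood of $D$; everything outside is left untouched, which immediately guarantees that $F_n$ agrees with $F$ — and hence $\boundary F_n$ agrees with $K$ — away from $D$.

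**Key steps.**
First I would set up the local model: inside $\nbhd D \cong D \times [-1,1]$ we see $1+2c$ vertical strands of $K$, $c+1$ of them positive and $c$ negative, together with the portion of $F$ described above. The effect of $K \mapsto K_n$ is to insert $n$ full twists into all $1+2c$ strands simultaneously inside this box. Second, I would describe the modification on $F$ that accompanies this twisting: the $c$ orientable "pairing" arcs of $F$, each spanning an oppositely-oriented pair, can simply be allowed to twist along with their two boundary strands — each such arc-with-product-neighborhood becomes an orientable band acquiring some writhe/twisting from the $n$ full twists, exactly as in the computation inside the proof of \autoref{thm:2twists}. Third — and this is the only genuinely new ingredient beyond \autoref{thm:2twists} — I would handle the leftover arc running from the single unpaired positive strand to $U$: this is the "algebraically one strand" of the section title, and twisting it $n$ times produces a band that runs out to the boundary, which one resolves (as in \autoref{fig:Seifertmodification}) into some collection of orientable and non-orientable bands attached near $\boundary D$; the precise count of non-orientable bands introduced will depend on $n$ and on $c$, and pinning this down is what the figure is for. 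Finally I would note that each such local alteration — adding a twisted band, sliding a band's foot, resolving a ribbon intersection à la \autoref{fig:ribbonresolution} — is one of finitely many local moves, and since only finitely many strands and finitely many twists are involved, the whole modification is a finite composition of them; orientability of the result is checked by inspecting the local models in \autoref{fig:Seifertmodification} directly.

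**The main obstacle.**
The real content is not in any calculation but in getting the local picture of the leftover arc right: unlike the $c$ paired bands, which behave just as in \autoref{thm:2twists}, the arc from the unpaired positive strand to $\boundary D$ interacts with $U$ in a way that genuinely changes the combinatorics of the bands, and one must check that after twisting and resolving, the result is still a \emph{connected, orientable} Seifert surface rather than a non-orientable spanning surface. I expect the bulk of the written proof to consist of an appeal to the local models drawn in \autoref{fig:Seifertmodification}, verifying case-by-case (or by a single uniform picture) that the band structure one obtains is orientable and that its boundary traces out exactly $K_n$; the finiteness claim is then immediate.
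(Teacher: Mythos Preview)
Your overall framework --- verify embeddedness, orientability, boundary, and finiteness, all locally near $D$ --- is sound, and the paper's proof is indeed almost entirely a deferral to \autoref{fig:Seifertmodification}. But your description of the mechanism is off in a way that matters. You propose to let the $c$ existing bands ``twist along with their boundary strands'' and then separately resolve the leftover arc into ``some collection of orientable and non-orientable bands.'' That is not what the figure does, and your comparison to the proof of \autoref{thm:2twists} is misleading: that argument deliberately produced a \emph{non-orientable} spanning surface (the single Mobius band $N_1$), which is exactly what we must avoid here since the lemma promises a Seifert surface. No non-orientable bands appear anywhere in the construction, so your anticipated ``count of non-orientable bands'' is a red herring.

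The actual move in \autoref{fig:Seifertmodification} is different: one first modifies $F$ to a new Seifert surface $F'$ for the \emph{same} knot $K$ by inserting a pair of new orientable ``arch'' bands (marked $(+)$ and $(-)$ in the figure) near $D$; these arches are arranged so that switching all the band-over-band crossings on one side of the picture effects a single full twist on the $2c+1$ strands while keeping the surface orientable. One then concatenates $n$ such insertions to reach $F_n$. The existing bands $B_i$ do not simply ``twist'': the new arches absorb the twist for them. The one genuine subtlety the paper flags --- and which you do not --- is that the local picture only applies when every band passing through $D$ has the same side (color) facing outward as the portion of $F$ meeting $U$; this is arranged by inserting a Reidemeister~II move into any band with the wrong side up, placing one crossing on each side of $D$ and then shrinking the neighborhood so neither appears in the local picture.
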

        
        The proof of the lemma is obvious from the figure up to one small point. Since we are encoding the two sides of the Seifert surface by color (Indigo/Orange), the modification indicated in \autoref{fig:Seifertmodification} only applies when all the bands passing through $U$ have the same side facing out of the page as the portion of the surface which intersects $U$. This can always be achieved by putting a Reidemeister II move in each band which has the wrong side facing outwards (one of the new crossings goes on each side of the disk $D$ then we shrink the neighborhood of $D$ so that neither crossing appears). 
        
        Applying the lemma to an appropriate Seifert surface for $P$ yields surfaces for $P_n$. These surfaces are shown in \autoref{fig:Pseifert} and we call them $F_n$. We connect the ends of the cores of the bands by arcs in the central disk of $F_n$ to obtain a basis for $H_1(F_n,\Z)$ which we label $(1), \ldots, (2n+2)$. We orient this basis by orienting all the arcs lying in the central disk of $F_n$ from left to right. The associated Seifert form, $A_n := [lk((i),(j)^+)]$, is a $(2n+2)\times(2n+2)$ matrix for $\abs{n} \ge 1$. We see by inspecting the diagram that $A_n$ and $A\sub{-n}$ have the following forms:
        
        $$
        A_n = \tiny{\left(
                    \begin{array}{ccc|ccccc}
                          2 &   0    & -1  & 1 &   0     &        & 0 & 0 \\
                         -1 &   1    &  0  & 0 &   0     & \cdots & 0 & 0 \\
                         -1 &   0    &  0  & 0 &   0     &        & 0 & 0 \\
                                            \hline
                          0 &   0    &  0  & 0 &   1     &        & 0 & 0 \\
                          0 &   0    &  0  & 0 &   0     & \ddots & 0 & 0 \\
                            & \vdots &     &   & \ddots  & \ddots & 1 & 0 \\
                          0 &   0    &  0  & 0 &   0     &    0   & 0 & 1 \\
                          0 &   0    &  1  & 0 &   0     &    0   & 0 & 0
                        \end{array}
                \right)}, \qquad
        A\sub{-n} = \tiny{\left(
                    \begin{array}{ccc|ccccc}
                          2 &   0    & -1  & 0 &   0     &        & 0 & 0 \\
                         -1 &   1    &  0  & 0 &   0     & \cdots & 0 & 0 \\
                         -1 &   0    &  0  & 0 &   0     &        & 0 & -1 \\
                                            \hline
                         -1 &   0    &  0  & 0 &   0     &        & 0 & 0 \\
                          0 &   0    &  0  &-1 &   0     & \ddots & 0 & 0 \\
                            & \vdots &     &   & \ddots  & \ddots & 0 & 0 \\
                          0 &   0    &  0  & 0 &   0     &   -1   & 0 & 0 \\
                          0 &   0    &  0  & 0 &   0     &    0   &-1 & 0
                        \end{array}
                \right)}
        $$
        
        \noindent We describe $A_n$ in terms of the four sub-matrices indicated in the figure:
        \begin{itemize}[label=---, itemsep=1mm]
            \item The lower-right block of $A_n$ is a $(2n-3)\times(2n-3)$ matrix consisting of ones down the $(i,i+1)$-diagonal and zeroes elsewhere.
            \item The top-right and bottom-left blocks consist of a single $1$ in the top-left and bottom-right entries respectively.
            \item The top-left block is independent of $n$ (and given in the example above). 
        \end{itemize}
        This description of $A_n$ can be checked directly from the figure for any value of $n \in \N$. We can obtain $A\sub{-n}$ by taking the negative transpose of the all but the upper-left block of $A_n$. We will use $A_n$ to compute the symmetrized Alexander polynomial and Levine-Tristram signature function associated to the knots $P_n$. We review the relationship between these two invariants.
        
        \begin{figure}[!hb]
            \labellist
            \small\hair 2pt
             \pinlabel {$n$} [ ] at 170 77
             \pinlabel {$\boundary F_n = P_n$} [ ] at 60 15
             \pinlabel \green{$>$} [ ] at 60 28.25
             \pinlabel \green{$>$} [ ] at 165 17.5
             \pinlabel \green{$>$} [ ] at 120 33
             \pinlabel \green{$>$} [ ] at 147 33
             \pinlabel \green{$>$} [ ] at 189 32.9
             \pinlabel \rotatebox{-15}{\green{$>$}} [ ] at 212 29.75
             \pinlabel \rotatebox{45}{\green{$>$}} [ ] at 237 31.5
             \pinlabel \tiny\green{$(1)$} [ ] at 28 23
             \pinlabel \tiny\green{$(2)$} [ ] at 115 15
             \pinlabel \tiny\green{$(4)$} [ ] at 128 26
             \pinlabel \tiny\green{$(5)$} [ ] at 149 26
             \pinlabel \tiny\green{$(2n+1)$} [r] at 190 26
             \pinlabel \tiny\green{$(2n+2)$} [ ] at 204 24
             \pinlabel \tiny\green{$(3)$} [ ] at 248 30
            \endlabellist
            \centering
            \includegraphics[scale=1.2]{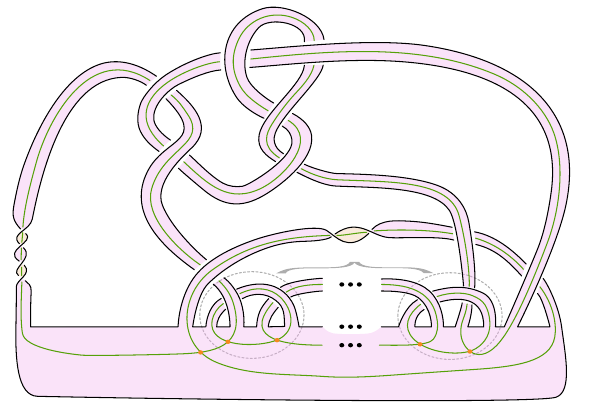}
            \caption{A Seifert surface, $F_n$, for $P_n$ with a labeled basis, $(1), \ldots, (2n+2)$, for $H_1(F_n,\Z)$.}
            \label{fig:Pseifert}
        \end{figure}

        \begin{proposition}[\cite{gilmer_signature_2015}]
        \label{prop:alexLT}
        Let $K$ be a knot and $A$ be a Seifert form for $K$. The symmetrized Alexander polynomial of $K$ is given by: 
        $$ 
            \Delta_K(t) = t^{-\dim(A)/2}\det(tA-A^T) = \det(\sqrt{t}A-\frac{1}{\sqrt{t}}A^T)
        $$
        \hfill and $\Delta_K(t) \in \R$ for any $t \in \C$ satisfying $\abs{t} = 1$. \medskip
        
        \noindent The Levine-Tristram signature of $K$ at a unit-norm complex number $t$ is given by:
        $$
            \sigma(K,t) = \sigma((1-t)A + (1-\frac{1}{t})A^T)
        $$
        Moreover:
        $$
            (t^{-1/2} - t^{1/2})^{\dim(A)} \Delta_K(t) = \det((1-t)A + (1-\frac{1}{t})A^T)
        $$
        from which it follows that:
        \begin{itemize}
            \item $\sigma(K,-): S^1(\subset \C) \to \Z$ is constant where $\Delta_K(t) \ne 0$
            \item $\sigma(K,-)$ must change wherever the sign of $\Delta_K(t)$ changes on $S^1 \subset \C$
        \end{itemize}
        \end{proposition}
        
        We exploit these properties to prove:
        
        \begin{proposition}
        \label{prop:salexformula}
        Let $n \in \Z$ and let $\Delta_n(t)$ denote the symmetrized Alexander polynomial of $P_n$. It is given by the formula:
        $$
            \Delta_n(t) = -\frac{1}{t^{n+1}} + \frac{2}{t^{n}} - \frac{1}{t^{n-1}} + \frac{2}{t} - 3 + 2t - t^{n-1} + 2t^{n} - t^{n+1}
        $$
        Clearly $\Delta_n(1) = 1$, but for $t^{\abs{n}} = -1$ and $\Re(t)< 7/8$ we obtain $\Delta_n(t) < 0$. It follows that for each $n \ne 0$ there exists some root of unity $t_* \in \sphere1 \subset \C$ where $\sigma(P_{n},t_*) \ne 0$.
        \end{proposition}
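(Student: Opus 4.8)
The plan is to reduce the whole proposition to a single determinant evaluation. By \autoref{prop:alexLT}, since $\dim A_n = 2n+2$, we have $\Delta_n(t) = t^{-(n+1)}\det(tA_n - A_n^{T})$, so everything comes down to computing $\det M_n$, where $M_n := tA_n - A_n^{T}$, and then tracking the sign of the resulting Laurent polynomial on the unit circle.

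First I would write $M_n$ out from the block description of $A_n$. It is extremely sparse: a fixed $4 \times 4$ corner of entries linear in $t$ (the same for every $n \ge 2$), a subdiagonal chain of entries equal to $t$ from position $(5,4)$ down to $(2n+2,\,2n+1)$, a parallel superdiagonal chain of entries equal to $-1$ from $(4,5)$ up to $(2n+1,\,2n+2)$, and only the two stray entries $M_n[2,\,2n+2] = -t$ and $M_n[2n+2,\,2] = 1$. I would then compute $\det M_n = \sum_\sigma \sgn(\sigma)\prod_i M_n[i,\sigma(i)]$ by enumerating the permutations that actually contribute. The key observation is that the $(2n-1) \times (2n-1)$ tridiagonal block carried by indices $4,\dots,2n+2$ has zero diagonal and odd size, so it cannot be matched internally; hence every contributing permutation must pair some chain index with one outside $\{4,\dots,2n+2\}$, and since the only entries crossing between the chain and $\{1,2,3\}$ are $M_n[4,3]$, $M_n[3,4]$, $M_n[2,\,2n+2]$, and $M_n[2n+2,\,2]$, a short case analysis (organized by $\sigma(4) \in \{3,5\}$, each splitting three ways according to whether row $2$ goes to column $1$, $3$, or $2n+2$) leaves exactly six contributing permutations, each an explicit cycle whose sign and monomial weight can be read off at once. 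The small cases $n = 0,1$, where the chain is empty, I would verify by hand.

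Summing the six contributions should produce
\[
    \det M_n = 1 - 2t + t^2 - t^n + 3t^{n+1} - t^{n+2} + t^{2n} - 2t^{2n+1} + t^{2n+2},
\]
and dividing by $t^{n+1}$ yields the claimed formula for $\Delta_n(t)$; the natural checks are that the answer is invariant under $t \mapsto t^{-1}$ and equals $1$ at $t = 1$, both forced by \autoref{prop:alexLT}. For the sign statement, note that when $t^n = 1$ one has $t^{n+1} = t$ and $t^{n-1} = t^{-1} = \bar t$, so the nine monomials collapse to $\Delta_n(t) = t + \bar t - 1 = 2\Re(t) - 1$, which is negative exactly when $\Re(t) < \tfrac12$. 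To deduce the last sentence I would apply this with $n+1$ in place of $n$: for $n \ge 1$ the open arc $\{z \in \sphere1 : \Re z < \tfrac12\}$ has angular length $\tfrac{4\pi}{3}$, which exceeds the spacing $\tfrac{2\pi}{n+1}$ of the $(n+1)$-th roots of unity, so the arc contains one of them, say $\zeta$; then $\Delta_{n+1}(\zeta) < 0 < 1 = \Delta_{n+1}(1)$, so $\Delta_{n+1}$ changes sign on $\sphere1$ and therefore has a sign-change point $z_0 \in \sphere1$. By \autoref{prop:alexLT} the signature $\sigma(P_{n+1},-)$ jumps across $z_0$; since it is locally constant near $t = 1$ with value $0$ (because $\Delta_{n+1}(1) \ne 0$ while $A_{n+1} - A_{n+1}^{T}$ is nonsingular and skew-symmetric), it is nonzero on an arc abutting $z_0$, and by density of roots of unity some such $\omega$ with $\Delta_{n+1}(\omega) \ne 0$ satisfies $\sigma(P_{n+1},\omega) \ne 0$.

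The step I expect to be fiddly — though not deep — is the permutation bookkeeping: verifying that the enumeration of contributing permutations is exhaustive, and getting every sign and every power of $t$ right so that the six weights sum precisely to the nine listed monomials. The $t \mapsto t^{-1}$ symmetry and the value $\Delta_n(1) = 1$ catch most slips, and a direct check of the formula for $n = 1, 2, 3$ would settle it.
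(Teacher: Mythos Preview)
Your approach is correct and genuinely different from the paper's. The paper computes $\det(tA_n-A_n^T)$ by a sequence of elementary row and column operations that collapse the $(2n+2)\times(2n+2)$ matrix to the block sum of an explicit $4\times4$ matrix (depending on $n$ only through the entries $\pm t^n$) with $-\mathbb{1}_{2n-2}$, and then evaluates that $4\times4$ determinant. You instead attack the Leibniz expansion directly, using the path-graph structure of the chain block and the parity obstruction (an odd-length zero-diagonal tridiagonal block admits no internal perfect matching) to cut the sum down to a handful of permutations. Both routes yield the same nine-term polynomial; the paper's reduction is perhaps more mechanical, while yours makes the combinatorics of the sparse matrix visible and avoids tracking a long chain of row operations.

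One small wrinkle in your bookkeeping: the organization by $(\sigma(4),\sigma(2))\in\{3,5\}\times\{1,3,2n+2\}$ does not give exactly one permutation per cell. The pair $(3,3)$ is impossible (both rows would claim column~$3$), while the pair $(3,\,2n+2)$ supports two permutations (rows $1$ and $3$ can go to columns $1,2$ in either order). The cleaner way to see the count is to note that the chain must use exactly one of the two row-external edges $\{(\text{row }4,\text{col }3),(\text{row }2n{+}2,\text{col }2)\}$ and exactly one of the two column-external edges $\{(\text{row }3,\text{col }4),(\text{row }2,\text{col }2n{+}2)\}$; each of these four choices leaves two even paths in the chain with a unique internal matching, and the residual matching on rows $\{1,2,3\}$ then contributes $1,2,2,1$ permutations respectively, for a total of six. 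With that adjustment your enumeration goes through, and the remaining sign analysis and density-of-roots-of-unity argument match the paper's.
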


        \begin{proof}
        
        The proof splits into three cases: $n=0$, for which one can check explicitly using the untwisted genus-one Seifert surface for $P(U)$ that $\Delta_P(t) = 1$, agreeing with the formula in the proposition. The remaining cases $n < 0$ and $n > 0$ turn out the same since $\Delta_{-n}(t) = \Delta_{n}(t)$. We will verify the formula for $n>0$ and see along the way why it is invariant under negating $n$. 
        
        We will reduce $tA_n - A^T$ into the block sum of a 4-by-4 matrix depending on $n$ and the matrix $-\one_{2n-2}$. This process consists of a sequence of elementary row and column operations followed by cyclically permuting an odd number of rows, all of which preserves the determinant. We describe this process by an example, and claim that it is clear from this example that the process can be completed for any $n \ne 0$. Let $n\ge 0$ be given ($n=6$ in the example) and let:
        \begin{align*}
            B_n&:= tA_n - A_n^T = \tiny{\left(
                                            \begin{array}{ccc|ccccccccccc}
                                             2 t-2 & 1 & 1-t & t & 0 & 0 & 0 & 0 & 0 & 0 & 0 & 0 & 0 & 0 \\
                                             -t & t-1 & 0 & 0 & 0 & 0 & 0 & 0 & 0 & 0 & 0 & 0 & 0 & 0 \\
                                             1-t & 0 & 0 & 0 & 0 & 0 & 0 & 0 & 0 & 0 & 0 & 0 & 0 & -1 \\
                                             \hline
                                             -1 & 0 & 0 & 0 & t & 0 & 0 & 0 & 0 & 0 & 0 & 0 & 0 & 0 \\
                                             0 & 0 & 0 & -1 & 0 & t & 0 & 0 & 0 & 0 & 0 & 0 & 0 & 0 \\
                                             0 & 0 & 0 & 0 & -1 & 0 & t & 0 & 0 & 0 & 0 & 0 & 0 & 0 \\
                                             0 & 0 & 0 & 0 & 0 & -1 & 0 & t & 0 & 0 & 0 & 0 & 0 & 0 \\
                                             0 & 0 & 0 & 0 & 0 & 0 & -1 & 0 & t & 0 & 0 & 0 & 0 & 0 \\
                                             0 & 0 & 0 & 0 & 0 & 0 & 0 & -1 & 0 & t & 0 & 0 & 0 & 0 \\
                                             0 & 0 & 0 & 0 & 0 & 0 & 0 & 0 & -1 & 0 & t & 0 & 0 & 0 \\
                                             0 & 0 & 0 & 0 & 0 & 0 & 0 & 0 & 0 & -1 & 0 & t & 0 & 0 \\
                                             0 & 0 & 0 & 0 & 0 & 0 & 0 & 0 & 0 & 0 & -1 & 0 & t & 0 \\
                                             0 & 0 & 0 & 0 & 0 & 0 & 0 & 0 & 0 & 0 & 0 & -1 & 0 & t \\
                                             0 & 0 & t & 0 & 0 & 0 & 0 & 0 & 0 & 0 & 0 & 0 & -1 & 0 \\
                                            \end{array}
                                            \right)}
        \end{align*}
        We note that $B_n$ is a $(2n+2)\times(2n+2)$-matrix. Observe that every column in the lower-right block, except the rightmost, contains a $-1$ and a $t$. We can perform elementary row operations on $B_n$ using these $-1$ entries to kill the $t$ entries in their respective columns. After this process we obtain the matrix $B_n'$ given by:
        $$
            B'_n := \tiny{\left(
                    \begin{array}{ccc|ccccccccccc}
                     2 t-2 & 1 & 1-t & t & 0 & 0 & 0 & 0 & 0 & 0 & 0 & 0 & 0 & 0 \\
                     -t & t-1 & 0 & 0 & 0 & 0 & 0 & 0 & 0 & 0 & 0 & 0 & 0 & 0 \\
                     1-t & 0 & 0 & 0 & 0 & 0 & 0 & 0 & 0 & 0 & 0 & 0 & 0 & -1 \\
                     \hline
                     -1 & 0 & t^6 & 0 & 0 & 0 & 0 & 0 & 0 & 0 & 0 & 0 & 0 & 0 \\
                     0 & 0 & 0 & -1 & 0 & 0 & 0 & 0 & 0 & 0 & 0 & 0 & 0 & t^5 \\
                     0 & 0 & t^5 & 0 & -1 & 0 & 0 & 0 & 0 & 0 & 0 & 0 & 0 & 0 \\
                     0 & 0 & 0 & 0 & 0 & -1 & 0 & 0 & 0 & 0 & 0 & 0 & 0 & t^4 \\
                     0 & 0 & t^4 & 0 & 0 & 0 & -1 & 0 & 0 & 0 & 0 & 0 & 0 & 0 \\
                     0 & 0 & 0 & 0 & 0 & 0 & 0 & -1 & 0 & 0 & 0 & 0 & 0 & t^3 \\
                     0 & 0 & t^3 & 0 & 0 & 0 & 0 & 0 & -1 & 0 & 0 & 0 & 0 & 0 \\
                     0 & 0 & 0 & 0 & 0 & 0 & 0 & 0 & 0 & -1 & 0 & 0 & 0 & t^2 \\
                     0 & 0 & t^2 & 0 & 0 & 0 & 0 & 0 & 0 & 0 & -1 & 0 & 0 & 0 \\
                     0 & 0 & 0 & 0 & 0 & 0 & 0 & 0 & 0 & 0 & 0 & -1 & 0 & t \\
                     0 & 0 & t & 0 & 0 & 0 & 0 & 0 & 0 & 0 & 0 & 0 & -1 & 0 \\
                    \end{array}
                    \right)}
        $$
        Rows (5) to (2n+2) now contain a $-1$ entry so we can use them to clear their respective rows. We then use the $-1$ in the (5,4) position to kill the $t$ in the (1,4) position:
        $$
        B\pp_n := \tiny{\left(
                    \begin{array}{ccc|ccccccccccc}
                     2 t-2 & 1 & 1-t & 0 & 0 & 0 & 0 & 0 & 0 & 0 & 0 & 0 & 0 & t^6 \\
                     -t & t-1 & 0 & 0 & 0 & 0 & 0 & 0 & 0 & 0 & 0 & 0 & 0 & 0 \\
                     1-t & 0 & 0 & 0 & 0 & 0 & 0 & 0 & 0 & 0 & 0 & 0 & 0 & -1 \\
                     \hline
                     -1 & 0 & t^6 & 0 & 0 & 0 & 0 & 0 & 0 & 0 & 0 & 0 & 0 & 0 \\
                     0 & 0 & 0 & -1 & 0 & 0 & 0 & 0 & 0 & 0 & 0 & 0 & 0 & 0 \\
                     0 & 0 & 0 & 0 & -1 & 0 & 0 & 0 & 0 & 0 & 0 & 0 & 0 & 0 \\
                     0 & 0 & 0 & 0 & 0 & -1 & 0 & 0 & 0 & 0 & 0 & 0 & 0 & 0 \\
                     0 & 0 & 0 & 0 & 0 & 0 & -1 & 0 & 0 & 0 & 0 & 0 & 0 & 0 \\
                     0 & 0 & 0 & 0 & 0 & 0 & 0 & -1 & 0 & 0 & 0 & 0 & 0 & 0 \\
                     0 & 0 & 0 & 0 & 0 & 0 & 0 & 0 & -1 & 0 & 0 & 0 & 0 & 0 \\
                     0 & 0 & 0 & 0 & 0 & 0 & 0 & 0 & 0 & -1 & 0 & 0 & 0 & 0 \\
                     0 & 0 & 0 & 0 & 0 & 0 & 0 & 0 & 0 & 0 & -1 & 0 & 0 & 0 \\
                     0 & 0 & 0 & 0 & 0 & 0 & 0 & 0 & 0 & 0 & 0 & -1 & 0 & 0 \\
                     0 & 0 & 0 & 0 & 0 & 0 & 0 & 0 & 0 & 0 & 0 & 0 & -1 & 0 \\
                    \end{array}
                    \right)}
        $$
        We finish by cyclically permuting columns $(4) - (2n+1)$, which is achieved by an even number of column transpositions which preserves the determinant:
        $$
        B\ppp_n := \tiny{\left(
                    \begin{array}{cccc|cccccccccc}
                     2 t-2 & 1 & 1-t & t^6 & 0 & 0 & 0 & 0 & 0 & 0 & 0 & 0 & 0 & 0 \\
                     -t & t-1 & 0 & 0 & 0 & 0 & 0 & 0 & 0 & 0 & 0 & 0 & 0 & 0 \\
                     1-t & 0 & 0 & -1 & 0 & 0 & 0 & 0 & 0 & 0 & 0 & 0 & 0 & 0 \\
                     -1 & 0 & t^6 & 0 & 0 & 0 & 0 & 0 & 0 & 0 & 0 & 0 & 0 & 0 \\
                     \hline
                     0 & 0 & 0 & 0 & -1 & 0 & 0 & 0 & 0 & 0 & 0 & 0 & 0 & 0 \\
                     0 & 0 & 0 & 0 & 0 & -1 & 0 & 0 & 0 & 0 & 0 & 0 & 0 & 0 \\
                     0 & 0 & 0 & 0 & 0 & 0 & -1 & 0 & 0 & 0 & 0 & 0 & 0 & 0 \\
                     0 & 0 & 0 & 0 & 0 & 0 & 0 & -1 & 0 & 0 & 0 & 0 & 0 & 0 \\
                     0 & 0 & 0 & 0 & 0 & 0 & 0 & 0 & -1 & 0 & 0 & 0 & 0 & 0 \\
                     0 & 0 & 0 & 0 & 0 & 0 & 0 & 0 & 0 & -1 & 0 & 0 & 0 & 0 \\
                     0 & 0 & 0 & 0 & 0 & 0 & 0 & 0 & 0 & 0 & -1 & 0 & 0 & 0 \\
                     0 & 0 & 0 & 0 & 0 & 0 & 0 & 0 & 0 & 0 & 0 & -1 & 0 & 0 \\
                     0 & 0 & 0 & 0 & 0 & 0 & 0 & 0 & 0 & 0 & 0 & 0 & -1 & 0 \\
                     0 & 0 & 0 & 0 & 0 & 0 & 0 & 0 & 0 & 0 & 0 & 0 & 0 & -1 \\
                    \end{array}
                    \right)}
        $$
        Notice we have shifted the horizontal and vertical lines. We now compute $\Delta_n(t)$. 
        \begin{align*}
            \Delta_n(t) &= \frac{1}{t^{n+1}}\det(B_n) = \frac{1}{t^{n+1}}\det(B\ppp_n) 
                         = \frac{1}{t^{n+1}}\det\left(
                                \tiny{  \begin{pmatrix}
                                            2t-2 & 1   & 1-t & t^n \\
                                            -t   & t-1 & 0   & 0   \\
                                            1-t  & 0   & 0   & -1  \\
                                            -1   & 0   & t^n & 0
                                        \end{pmatrix}} \oplus -\one\sub{2n-2}
                                            \right)\\
             &= -\frac{1}{t^{n+1}} + \frac{2}{t^n} - \frac{1}{t^{n-1}} + \frac{2}{t} - 3 + 2t - t^{n-1} + 2t^n - t^{n+1}
        \end{align*}
        If we had started with $A\sub{-n}$, then we would obtain the same 4-by-4 matrix but with row (4) and column (4) negated and transposed. That would not change the determinant of this particular matrix so $\Delta_n(t) = \Delta\sub{-n}(t)$. The reader may check $\Delta_n(1) = 1$. Next, we show $\Delta_n(t) < 0$ for any $t \in \sphere1 \subset \C$ satisfying $t^{\abs{n}} = -1$ and $\Re(t) < 7/8$. Assuming the hypotheses on $t$ we compute:
        \begin{align*}
            \Delta_n(t) &= -\frac{1}{t^{n+1}} + \frac{2}{t^{n}} - \frac{1}{t^{n-1}} + \frac{2}{t} - 3 + 2t - t^{n-1} + 2t^{n} - t^{n+1} \\
                        &= \frac{1}{t^n}\left( -\frac{1}{t} + 2 - t \right) + \frac{2}{t} - 3 + 2t + t^n \left(-\frac{1}{t} + 2 - t\right) \\
                        &= -(2-2\Re(t)) + (4\Re(t)-3) - (2 - 2\Re(t)) = \boxed{8 \Re(t) - 7}\\
            \Re(t) < 7/8 &\implies \Delta_n(t) < 0
        \end{align*}
        
        Lastly, we check that there is a root of unity $t_* \in \sphere1 \subset \C$ such that $\sigma(P_n,t_*) \ne 0$ for each $n \ne 0$. Pick any such $n$, it follows that there is an $\abs{n}^{th}$-root of $-1$ with real part less than $7/8$ and thus $\Delta_{n}(t)$ attains both positive and negative values on $\sphere1 \subset \C$. The intermediate value theorem guarantees there is some $t$-value at which $\Delta_n$ changes sign. \autoref{prop:alexLT} guarantees that $\sigma(P_n,t)$ must have different values on either side of this value and so $\sigma(P_n,t) \ne 0$ on at least one side. Since roots of unity are dense in $\sphere1$, we can pick some root of unity $t_*$ arbitrarily close to the point at which the signature changes sign on the side where $\sigma(P_{n},t)$ is non-zero.
        \end{proof} 
        
        Combining \autoref{prop:alexLT} with the comment after the proof of \autoref{prop:oddcase} completes the proof of \autoref{thm:main}.

\newpage
        \thispagestyle{empty}
        \newgeometry{left=5mm,top=5mm,right=5mm,bottom=5mm}
        \subsection*{The Calculus of Dualizable Patterns: a Cheat Sheet}\hfill [Charles Stine, 2022]
        \label{cheatsheet}
        \begin{multicols}{2}
        \noindent Let $K\subset\sphere3$. We adopt the convention that $E_K := \sphere3 \setminus \nbhd K$ is oriented INF at its boundary, while $\DS21 (\diffeo \nbhd K)$ is oriented ONF at its boundary. \smallskip
        
        \noindent Definitions of patterns and dual patterns: 
        \begin{gather*}
            P \subset V\ (\diffeo \DS21), \hspace{5mm} V_P := V \setminus \nbhd P \\
            V_P\ \text{is ort'd INF at $\boundary\nbhd{P}$, ONF at $\boundary V$}\\
            P\dual \subset V\dual\ (\diffeo \DS21), \qquad V\dual_{P\dual} := V\dual \setminus \nbhd{P\dual}\\
            * : V_P \to V\dual_{P\dual} \text{ (preserving orientation)} \\
            (\mu_P, \lambda_P, \mu_V, \lambda_V) \buildrel{*}\over\longmapsto (-\mu_{V\dual}, \lambda_{V\dual}, -\mu_{P\dual}, \lambda_{P\dual}) \\
        \end{gather*}
        Ways to modify patterns:
        \begin{itemize}[leftmargin=8mm, itemsep=1mm]
            \item $P\dual$ --- the dual pattern
            \item $P_n$ --- add $n$ meridonal twists
            \item $\bar P$ --- reverse all crossings and the orientation of $P$
            \item $P^m$ --- self-compose $m$ times
            \item $P\invs$ --- $(:=\bar{P}\dual)$, the concordance inverse
            \item $P\pound$ --- connect-sum pattern, $\wrap(P\pound) = 1$
        \end{itemize}
        Composition conventions:
        \begin{itemize}[leftmargin=8mm,itemsep=1mm]
            \item $P\dual_n = (P\dual)_n$ not $(P_n)\dual$
            \item $\bar{P}_n = (\bar{P})_n$ not $\bar{(P_n)}$
            \item $P_n^m = (P^m)_n$ not $(P_n)^m$
            \item $P\dual\pound = (P\dual)\pound$ not $(P\pound)\dual$
            \item $\bar{P}\pound = (\bar{P})\pound$ not $\bar{(P\pound)}$
            \item $P\pound^m = (P^m)\pound$ not $(P\pound)^m$ 
        \end{itemize}
        These conventions are chosen so that if a pattern is written with a sequence of superscripts and a sequence of subscripts, then the operations should be applied in the order: superscripts from left to right followed by subscripts from left to right (bar counts as the leftmost superscript!).\medskip
        
        \noindent Basic Identities ($P,Q$ dualizable patterns, $J,K$ knots in $\sphere3$, and $m,n \in \Z$):
        \begin{gather*}
            (P\dual)\dual = P, \quad \bar{(\bar P)} = P, \quad P_0 = P \\ 
            (P_n)\dual = P\dual_{-n}, \quad \bar{(P_n)} = \bar{P}_{-n}, \quad \bar{(P\dual)} = \bar{P}\dual \\
            (P \of P\invs)(K) \sim K \sim (P\invs \of P)(K) \\
            (P \of Q)\dual = Q\dual \of P\dual \\
            (P \of Q)_n = (P_n \of Q_n) \\
            (P_n)_m = P_{(n+m)} \\
            K\pound(J) = K \connsum J = J \connsum K = J\pound(K) \\ 
            P(K) = P(K\pound(U)) = (P \of K\pound)(U) = P \of K\pound
        \end{gather*}
        \noindent For patterns satisfying $\wrap(P)=1$:
        \begin{gather*}
            P\dual = P, \quad P\invs = \bar{P}, \quad P_n = P\\
            P = P(U)\pound, \quad P(K) = P(U) \connsum K
        \end{gather*}
        \noindent Retracing Theorems ($P$ dualizable):
        \begin{gather*}
            X_0(P) \diffeo X_0(P\dual), \quad X_n(P) \diffeo X_n(P\dual_{n}) \\
            X_n(P \connsum \bar P\dual) \diffeo X_n(\bar P_n \of P)\\
            \implies (\bar P_n \of P)\ \text{is $n$-shake-slice!}
        \end{gather*}
        \noindent Let $\sI:\mathcal C \to \mathcal A$ be a homomorphism from the smooth or TOP concordance group to an Abelian group $\mathcal A$ (usually $\Z$). If $\sI$ is 0-trace invariant, meaning, $$X_0(K)\diffeo X_0(J) \implies \sI(K) = \sI(J)$$ then, for any dualizable $P \subset V$, $K \subset \sphere3$, and $n \in \Z$, $$ \sI(P_n(K)) = \sI(K) + \sI(P_n) $$ We call such invariants \emph{compatible} with dualizable patterns.
        \end{multicols}
        \vspace{-5mm}
        \begin{figure}[!ht]
            \labellist
            \small\hair 2pt
             \pinlabel \green{$\lambda_V$} [ ] at 14 248
             \pinlabel \red{$\mu_V$} [ ] at 40 240
             \pinlabel {$m_1$} [l] at 90 211
             \pinlabel \blue{$K_0$} [ ] at 94 245
             \pinlabel \orange{$b_1$} [l] at 136 273
             \pinlabel \blue{$(+2)$} [l] at 296 267
             \pinlabel \green{$(-2)$} [ ] at 17 107
             \pinlabel \purple{Gluck twisting} [t] at 310 16
             \pinlabel \green{\tiny{$+2$}} [ ] at 348 98
             \pinlabel \green{$(0)$} [ ] at 332 134
             \pinlabel \blue{$(0)$} [ ] at 396 111
             \pinlabel \green{$P\dual$} [l] at 433 124
             \pinlabel \red{$\mu_{V\dual}$} [l] at 393 87
             \pinlabel {$P$} [ ] at 47 158
            \endlabellist
            \centering
            \includegraphics{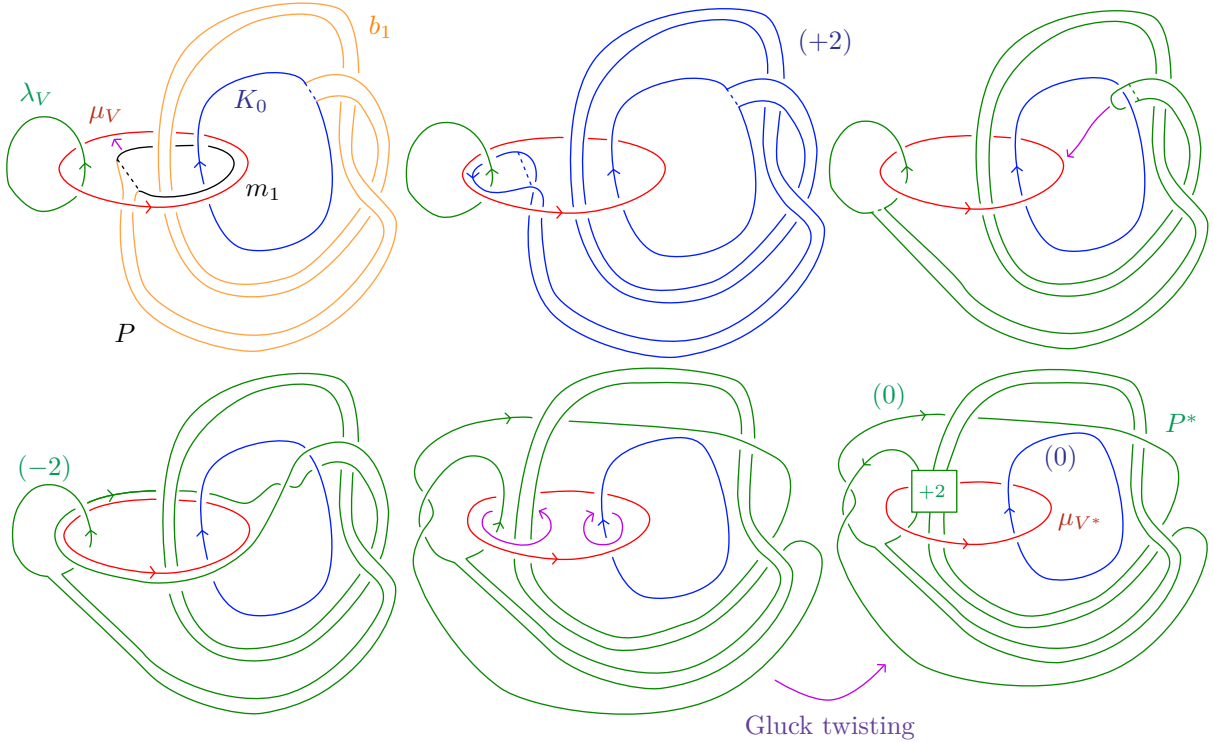}
            \caption{Working out $P\dual$ from a base-knot-with-bands description of $P$. A number in parentheses indicates a new framing after a handle slide or diffeomorphism.}
            \label{fig:cheatsheet}
        \end{figure}
        \newpage
        \restoregeometry
\bibliographystyle{alpha}
\bibliography{references}
\end{document}